\documentclass[11pt]{article}

%------------------------

% --------------------------------new commands - Panos
 
\newcommand{\tn}{\textnormal}
\newcommand{\mb}{\mathbb}
\newcommand{\mc}{\mathcal}
\newcommand{\lp}{\left(}
\newcommand{\rp}{\right)} \newcommand{\lb}{\left\lbrace}
\newcommand{\rb}{\right\rbrace}

\usepackage[normalem]{ulem}
\usepackage{soul}
% --------------------------------------------------------

%\usepackage{showkeys}

\usepackage {epsfig,amsmath,euscript}

\usepackage[latin1]{inputenc}

\usepackage[english]{babel}
\usepackage{color}
\usepackage{subcaption}
\usepackage{amsmath}
\usepackage{amsthm}

\usepackage{amssymb}

\usepackage{ae}
\usepackage{float}
\usepackage[T1]{fontenc}
\usepackage{graphicx}
\usepackage{epstopdf}
\usepackage{longtable}
\DeclareGraphicsRule{.tif}{png}{.png}{`convert #1 `basename #1 .tif`.png}
\usepackage{color}
\definecolor{rred}{rgb}{0.7,0.0,0.2}
\definecolor{bblue}{rgb}{0.2,0.0,0.7}
% % \newcommand[1]{{\color{rred} #1}}
% \newcommands[2]{{\color{rred} #1}{\color{bblue} #2}}

\topmargin -1cm
\oddsidemargin 0cm
\textwidth 6.5in
\textheight 9in
\newcommand{\secref}[1]{Section \ref{sec:#1}}

\newtheorem{definition}{Definition}
\newtheorem{proposition}{Proposition}
\newtheorem{remark}{Remark}
\newtheorem{theorem}{Theorem}

\newtheorem{lemma}{Lemma}
\newcommand{\seclab}[1]{\label{sec:#1}}

\newcommand{\eqlab}[1]{\label{eq:#1}}
\renewcommand{\eqref}[1]{(\ref{eq:#1})}

\newcommand{\figref}[1]{Figure~\ref{fig:#1}}
\newcommand{\figlab}[1]{\label{fig:#1}}
\newcommand{\propref}[1]{Proposition~\ref{proposition:#1}}
\newcommand{\proplab}[1]{\label{proposition:#1}}

\newcommand{\defnref}[1]{Definition~\ref{definition:#1}}
\newcommand{\defnlab}[1]{\label{definition:#1}}
\newcommand{\lemmaref}[1]{Lemma~\ref{lemma:#1}}
\newcommand{\lemmalab}[1]{\label{lemma:#1}}
\newcommand{\remref}[1]{Remark~\ref{remark:#1}}
\newcommand{\remlab}[1]{\label{remark:#1}}
\newcommand{\thmref}[1]{Theorem~\ref{theorem:#1}}
\newcommand{\thmlab}[1]{\label{theorem:#1}}

\newcommand{\asuref}[1]{Assumption~\ref{assumption:#1}}
\newcommand{\asulab}[1]{\label{assumption:#1}}

\newtheorem{cor}{Corollary}

\newtheorem{asu}{Assumption}

\makeatletter
\newcommand{\tpitchfork}{%
	\vbox{
		\baselineskip\z@skip
		\lineskip-.52ex
		\lineskiplimit\maxdimen
		\m@th
		\ialign{##\crcr\hidewidth\smash{$-$}\hidewidth\crcr$\pitchfork$\crcr}
	}%
}
\makeatother
% \usepackage{pstricks}
% \usepackage{pst-plot}
% \usepackage{pstricks-add}
% \usepackage{pst-func} 
% % \usepackage{pst-math}
% \usepackage{xkeyval}
% \usepackage{pst-xkey}
% \usepackage{pst-all}
% \usepackage{pst-3dplot}
%\usepackage{tikz}

\DeclareGraphicsRule{.tif}{png}{.png}{`convert #1 `basename #1 .tif`.png}

\title{Bifurcations of mixed-mode oscillations in three-timescale systems: an extended prototypical example}
\date{}
\author{P. Kaklamanos, N. Popovi\'c, and K. U. Kristiansen\footnote{P. Kaklamanos and N. Popovi\'c: School of Mathematics, University of Edinburgh, James Clerk Maxwell Building, King's Buildings, Peter Guthrie Tait Road, Edinburgh, EH9 3FD, United Kingdom (\texttt{p.kaklamanos@sms.ed.ac.uk} and \texttt{nikola.popovic@ed.ac.uk}); K. U. Kristiansen: Department of Applied Mathematics and Computer Science, Technical University of Denmark, Asmussens All\'e, Building 303B, 2800 Kgs. Lyngby, Denmark (\texttt{krkri@dtu.dk})}}

\begin{document}
	
% \date {}

\maketitle

%\begin{keywords} 
%	Piecewise smooth systems, intersecting discontinuity sets, Filippov,   regularization, blowup, geometric singular perturbation theory
%\end{keywords}

%\begin{AMS}
%	37G10, 34E15, 37M99
%	%???34E15, 34E13, 37M99??
%\end{AMS}
\pagestyle{myheadings}
\thispagestyle{plain}

\begin{abstract}
We study a class of multi-parameter three-dimensional systems of ordinary differential equations that exhibit dynamics on three distinct timescales. We apply geometric singular perturbation theory to explore the dependence of the geometry of these systems on their parameters, with a focus on mixed-mode oscillations (MMOs) and their bifurcations. In particular, we uncover a novel geometric mechanism that encodes the transition from MMOs with single epochs of small-amplitude oscillations (SAOs) to those with double-epoch SAOs. We identify a relatively simple prototypical three-timescale system that realises our mechanism, featuring a one-dimensional $S$-shaped supercritical manifold that is embedded into a two-dimensional $S$-shaped critical manifold in a symmetric fashion. We show that the Koper model from chemical kinetics is merely a particular realisation of that prototypical system for a specific choice of parameters; in particular, we explain the robust occurrence of mixed-mode dynamics with double epochs of SAOs therein. Finally, we argue that our geometric mechanism can elucidate the mixed-mode dynamics of more complicated systems with a similar underlying geometry, such as of a three-dimensional, three-timescale reduction of the Hodgkin-Huxley equations from mathematical neuroscience. 
\end{abstract}

%\begin{keywords} 
%	three time-scales, mixed-mode oscillations (MMOs), multiple/double epochs of small oscillations, Koper model, Hodgkin \& Huxley model, geometric singular perturbation theory
%\end{keywords} 

\section{Introduction}\seclab{Introduction}

The Koper model from chemical kinetics  \cite{koper1995bifurcations} is typically written as
\begin{subequations}\eqlab{koper1}
	\begin{align}
	\varepsilon \dot{x} &=ky+3x-x^3-\lambda, \eqlab{koper1-a}\\
	\dot{y} &=x-2y+z, \eqlab{koper1-b}\\
	\dot{z} &=\delta \lp y-z\rp, \eqlab{koper1-c}
	\end{align}
\end{subequations}
with $k,\lambda\in\mb{R}$ and $\varepsilon$ and $\delta$ real and positive parameters. When $ \varepsilon $ is sufficiently small, Equation~\eqref{koper1} exhibits dynamics on two distinct timescales: the variable $x$ is then called the \textit{fast variable}, while the variables $y$ and $z$ are the \textit{slow variables}; correspondingly, Equation~\eqref{koper1-a} is denoted the \textit{fast equation}, whereas \eqref{koper1-b} and \eqref{koper1-c} are called \textit{slow equations}, respectively. On the other hand, when both $ \varepsilon$ and $\delta$ are small, Equation~\eqref{koper1} is a three-timescale system; the variables $x$, $y$, and $z$ are then called the \textit{fast}, \textit{intermediate}, and \textit{slow variables}, respectively. Correspondingly, Equations~\eqref{koper1-a}, \eqref{koper1-b}, and \eqref{koper1-c} are denoted the \textit{fast}, \textit{intermediate}, and \textit{slow equations}, respectively.

Multiple-scale systems of ordinary differential equations frequently feature \textit{mixed-mode oscillations} (MMOs); these are trajectories that are characterised by the alternation of small-amplitude oscillations (SAOs) and large-amplitude excursions (LAOs) in the corresponding time series.
%, see \figref{fareys}. 
A particularly fruitful approach for the study of mixed-mode dynamics in singularly perturbed slow-fast systems of the type of the Koper model, Equation~\eqref{koper1}, is based on dynamical systems theory, combining Fenichel's geometric singular perturbation theory (GSPT) \cite{fenichel1979geometric} with the desingularisation technique known as ``blow-up"  \cite{krupa2001extending}.
Of particular relevance to that approach are localised, non-hyperbolic singularities (``canard points") on the corresponding critical manifolds which generate SAOs in the resulting MMO trajectories, whereas LAOs arise via a global return mechanism along normally hyperbolic portions of those manifolds. 
%(\emph{Canards} are trajectories that connect attracting portions of slow manifolds with repelling ones \cite{krupa2001extending,szmolyan2001canards}.) 
A relatively recent, exhaustive review of this so-called ``generalised canard mechanism" for the emergence of MMOs can be found in \cite{desroches2012mixed}.

\begin{figure}[ht!]
	\centering
	\begin{subfigure}[b]{0.45\textwidth}
		\centering
		\includegraphics[scale = 0.45]{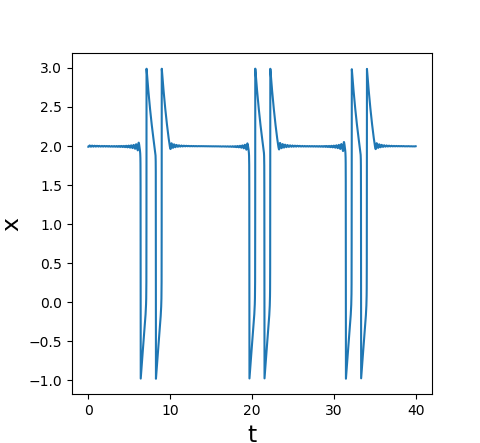}
		\caption{SAOs ``above" ($k = -4.5$, $\lambda = -2.0$).}
	\end{subfigure}
	~
	\begin{subfigure}[b]{0.45\textwidth}
		\centering
		\includegraphics[scale = 0.45]{pics/intro/below}
		\caption{SAOs ``below" ($k = -4.5$, $\lambda = 2.0$).}
	\end{subfigure}
	\\
	\centering
	\begin{subfigure}[b]{0.45\textwidth}
		\centering
		\includegraphics[scale = 0.45]{pics/intro/double}
		\caption{Double epochs of SAOs ($k = -4.0$, $\lambda = 0.0$).}
	\end{subfigure}
	~
	\begin{subfigure}[b]{0.45\textwidth}
		\centering
		\includegraphics[scale = 0.45]{pics/intro/relaxation}
		\caption{Relaxation oscillation ($k = -4.5$, $\lambda = 0.0$).}
	\end{subfigure}
	\caption{Oscillatory dynamics in the Koper model, Equation~\eqref{koper1}, for different values of the parameters $k$ and $\lambda$. (a) MMO trajectory with single epochs of SAOs and Farey sequence $2^{s_1}2^{s_2}2^{s_3}\cdots$; (b) MMO trajectory with single epochs of SAOs and Farey sequence $2_{s_1}2_{s_2}2_{s_3}\cdots$; (c) MMO trajectory with double epochs of SAOs and Farey sequence $1^{s_1}1_{s_2}1^{s_3}1_{s_4}\cdots$; (d) relaxation oscillation.}
	\figlab{fareys}
\end{figure}

Representative MMO trajectories that are realised in the three-timescale Equation~\eqref{koper1} can be seen in \figref{fareys}, where we set $\varepsilon=0.01=\delta$ throughout.
Each such trajectory can be associated with a sequence of the form $\{F_0F_1\ldots\}$, called the \textit{Farey sequence}, which describes the succession of large excursions and small oscillations, where the segments $F_j$ are of the form 
\begin{align*}
F_j=\begin{cases}
{L}^s \tn{ if the segment consists of $L$ LAOs, followed by $s$ SAOs ``above''}; \\
{L}_s \tn{ if the segment consists of $L$ LAOs, followed by $s$ SAOs ``below''}.
\end{cases}
\end{align*}
%The meaning of the labels ``above" and ``below" will be clarified through our discussion of the emergence of MMOs in Equation~\eqref{normal}; see also \figref{fareys} for an illustration in the context of the Koper model, Equation~\eqref{koper1}.
If a Farey sequence consists of $L^s$-type or $L_s$-type segments only, we say that the corresponding MMO trajectory contains \textit{single epochs} of SAOs, as seen in panels (a) and (b) of \figref{fareys}, respectively; Farey sequences that consist of both $L^s$-type and $L_s$-type segments correspond to MMO trajectories that contain \textit{double epochs} of SAOs, as shown in \figref{fareys}(c). Finally, relaxation oscillation refers to %merely consist of LAOs, with associated Farey sequence $\{L^0\}$; 
oscillatory trajectories that contain large excursions and no SAO segments, i.e., trajectories with associated Farey sequence $\{L^0\}$; cf.~\figref{fareys}(d).

\bigskip

%We consider the family of three-dimensional singularly perturbed systems of the form
%\begin{subequations}\eqlab{normal}
%\begin{align}
%\varepsilon\dot{x} &=-y + f_2x^2+f_3x^3=:f(x,y), \eqlab{normal-a} \\
%\dot{y} &=\alpha x+\beta y{-z}=:g(x,y,z), \eqlab{normal-b} \\
%\dot{z} &=\delta \lp \mu +\phi\lp x, y,z\rp\rp=:\delta h(x,y,z), \eqlab{normal-c}
%\end{align}
%\end{subequations}
%with $f_2$, $\varepsilon$, and $\delta$ positive and $f_3$ negative; moreover,
%$\phi:\mb{R}^3\to\mb{R}$ is a smooth function in $(x,y,z)$ that will be specified further in the following. When $\varepsilon$ and $\delta$ are sufficiently small, Equation~\eqref{normal} exhibits dynamics on three distinct timescales; the variables $x$, $y$, and $z$ are then called the \textit{fast}, \textit{intermediate}, and \textit{slow variables}, respectively. Correspondingly, Equations~\eqref{normal-a}, \eqref{normal-b}, and \eqref{normal-c} are called the \textit{fast}, \textit{intermediate}, and \textit{slow equations}, respectively. 

%\textcolor{red}{We remark that Equation~\eqref{normal} can be viewed as the unfolding of a slow-fast Li\'enard system \cite{dumortier2001multiple} in the $z$-direction, where the two-dimensional reduced flow on its critical manifold that is defined by the algebraic constraint $0 = -y+f_2x^2+f_3x^3 $ in \eqref{normal-a} in the limit of $\varepsilon=0$ also constitutes a slow-fast Li\'enard system, as will become apparent in the following.}

MMOs in the Koper model have been extensively studied in the two-timescale context, i.e., for $\varepsilon>0$ sufficiently small and $\delta=\mathcal{O}(1)$ in Equation~\eqref{koper1} \cite{desroches2012mixed,koper1995bifurcations,kuehn2011decomposing}. However, to our knowledge, there are no equivalent studies in the literature of the three-timescale Koper model, with $\varepsilon$ and $\delta$ small in \eqref{koper1}, which is the scenario we will consider in this article. In the process, we will uncover a geometric mechanism that encodes bifurcations of MMOs and, in particular, the transition from MMOs with single epochs to double epochs of SAOs therein; recall \figref{fareys}(c).

Rather than formulating our mechanism within the framework of the Koper model, Equation\eqref{koper1}, we will first consider the analytically simpler family of slow-fast systems
\begin{subequations}\eqlab{normal}
\begin{align}
\varepsilon\dot{x} &=-y + f_2x^2+f_3x^3=:f(x,y), \eqlab{normal-a} \\
\dot{y} &=\alpha x+\beta y{-z}=:g(x,y,z), \eqlab{normal-b} \\
\dot{z} &=\delta \lp \mu +\phi\lp x, y,z\rp\rp=:\delta h(x,y,z) \eqlab{normal-c}
\end{align}
\end{subequations}
which can be obtained from \eqref{koper1} via a sequence of affine transformations, with 
\begin{subequations}\eqlab{kop_params}
\begin{gather}
\varepsilon = \frac\epsilon{|k|},\quad f_2 = \frac3{|k|}, \quad f_3 = -\frac1{|k|}, \\
\alpha = 1, \quad \beta = -2, \\
\mu = \frac{k+\lambda+2}k,\quad\text{and}\quad \phi(x,y,z) = -y-z.
\end{gather}
\end{subequations}
Our motivation for introducing Equation~\eqref{normal} is two-fold: first, the geometry of the Koper model in \eqref{kop_params} will turn out to be quite restrictive, as variation of the parameter $k$ in \eqref{koper1-a} affects both the associated invariant manifolds and the reduced flow thereon. In \eqref{normal}, on the other hand, the effect of the corresponding parameters $f_2$, $f_3$, and $\mu$ on the geometry can be studied independently. Second, it will become apparent that the geometric mechanism described here is generic, in that it transcends the Koper model proper; correspondingly, we propose Equation~\eqref{normal}, with $f_2>0$, $f_3<0$, $\alpha$, $\beta$, and $\mu$ real parameters and $\varepsilon$ and $\delta$ sufficiently small, as a ``prototypical", normal form-type model which encapsulates our mechanism.

Mixed-mode dynamics in three-timescale slow-fast systems of the type in \eqref{normal} has been studied before; see, e.g., \cite{desroches2012mixed,nan2014dynamical,de2014three,de2016sector} for specific examples and further references. In particular, ``prototypical" models akin to the one in Equation~\eqref{normal} have been considered by Krupa et al. in \cite{krupa2008mixed} and by Letson et al. in \cite{letson2017analysis}; the corresponding systems of equations are given by
\begin{subequations}\eqlab{prototypical}
\begin{align}
\varepsilon\dot{x} &=-y+f_2x^2+f_3x^3, \eqlab{prototypical-a}\\
\dot{y} &=x-z, \eqlab{prototypical-b} \\
\dot{z} &=\varepsilon \lp \mu+\phi(x,y,z)\rp \eqlab{prototypical-c}
\end{align}
\end{subequations}
and by
\begin{subequations}\eqlab{canonic}
\begin{align}
\varepsilon\dot{x} &=y+x^2, \eqlab{canonic-a} \\
\dot{y} &=-\alpha^2 x+\beta y+z, \\
\dot{z}&=\delta,
\end{align}
\end{subequations}
respectively.

However, it is worth emphasising that our prototypical model, Equation~\eqref{normal}, is substantively different from both Equations~\eqref{prototypical} and \eqref{canonic}, in spite of the evident similarities between the three systems. Specifically, Equation~\eqref{prototypical} refers to the special case of $\alpha = 1$, $\beta=0$ and $\delta=\varepsilon$ in \eqref{normal}. As will become clear in the following, the absence of a linear $y$-term in \eqref{prototypical-b} makes a crucial difference geometrically, as it is precisely this term in \eqref{prototypical-b} which generates MMOs with double SAO epochs in the three-timescale regime. The canonical form in \eqref{canonic}, on the other hand, does capture local phenomena and properties of SAOs for $\beta\neq 0$ therein; however, as no cubic $x$-term is present in \eqref{canonic-a}, it does not allow for LAO-type dynamics via a global return mechanism, nor does it admit true equilibria. In that sense, Equation~\eqref{normal} combines aspects of both \eqref{prototypical} and \eqref{canonic}, yielding rich oscillatory behaviour which has, to the best of our knowledge, not previously been classified in the three-timescale context.

Correspondingly, our principal aim in this article is a classification of the mixed-mode dynamics in our ``generalised prototypical model", Equation~\eqref{normal}. Then, we will apply that classification to the realisation thereof that is provided by the Koper model, Equation~\eqref{koper1}, in the three-timescale scenario where $\varepsilon$ and $\delta$ are sufficiently small, on the basis of Fenichel's geometric singular perturbation theory (GSPT) \cite{fenichel1979geometric}; the resulting bifurcation diagram, in terms of the parameters $k$ and $\lambda$, is shown in \figref{kl-plane0}.  In particular, we will explain the robust occurrence of mixed-mode dynamics with double epochs of SAOs in the three-timescale Koper model; by contrast, double-epoch MMOs have only been observed in very narrow parameter regimes in the two-timescale case \cite{desroches2012mixed}.
%(In fact, a further study of double-epoch MMOs is signposted as a topic for future investigation in \cite{desroches2012mixed}.) 
Throughout, we will focus on the novel singular geometry of Equations~\eqref{normal} and \eqref{koper1}, i.e., on the double singular limit of $\varepsilon=0=\delta$ therein, as well as on perturbations off that limit in either $\varepsilon$ or $\delta$. Subsequently, we will comment on the qualitative mixed-mode dynamics which is expected to result from a full two-parameter perturbation analysis, as is also evidenced by numerical simulation.

Finally, we will argue that the geometric mechanism described here is ``generic'', in the sense that it allows for the classification of complex mixed-mode dynamics in more complicated systems with similar geometric properties, such as in a three-dimensional reduction of the Hodgkin-Huxley equations from mathematical neuroscience \cite{HHmain,doi2001complex,rubin2007giant}.

\begin{figure}[ht!]
	\centering
	\includegraphics[scale=0.34]{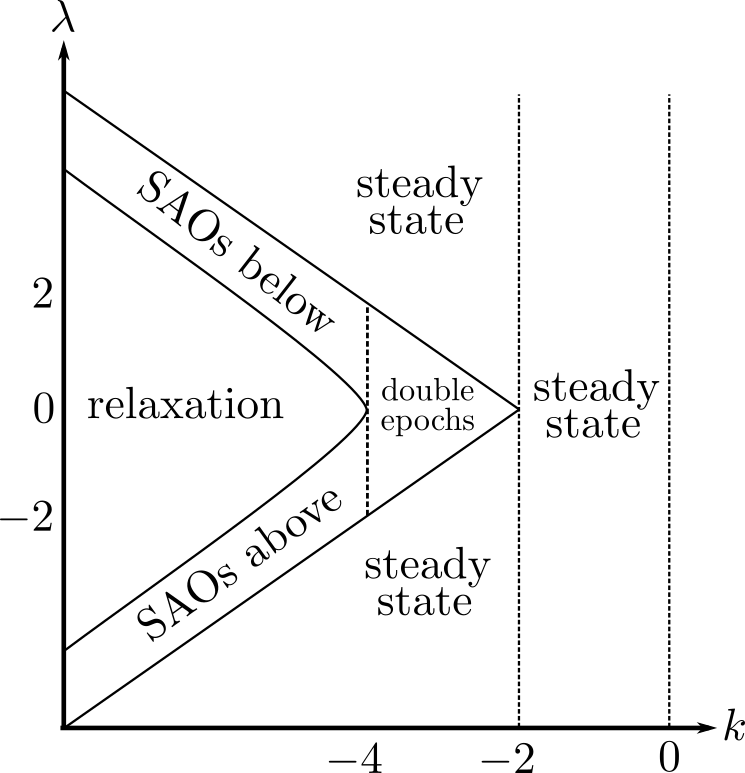}
	\caption{Two-parameter bifurcation diagram of the three-timescale Koper model, Equation~\eqref{koper1}, to leading order in $\varepsilon$ and $\delta$; see \secref{koper} for details.}
	\figlab{kl-plane0}
\end{figure}

The article is organised as follows. In \secref{singgeom}, we describe the geometry of the three-time-scale Equation~\eqref{normal} in the double singular limit of $\varepsilon=0=\delta$: we define critical and supercritical manifolds; then, we construct families of singular cycles which form the basis for MMO trajectories of Equation~\eqref{normal}.
%we describe critical and supercritical manifolds in that limit; moreover, we identify fold sets of these manifolds at which standard GSPT breaks down, we investigate the dependence of the resulting singular geometry on the parameters in \eqref{normal}, and we relate that geometry to bifurcations of the corresponding singular flow. 
In \secref{singpert1}, we study the singularly perturbed system in \eqref{normal} for $\varepsilon$ and $\delta$ sufficiently small; we classify the mixed-mode dynamics of \eqref{normal}, as illustrated in \figref{fareys}, by establishing a correspondence with the cycles constructed in \secref{singgeom}. In \secref{koper}, we apply our results to the Koper model from chemical kinetics, Equation~\eqref{koper1}, and we elucidate in detail the structure of the two-parameter bifurcation diagram in \figref{kl-plane0}. We conclude in \secref{conclusion} with a discussion, and an outlook to future research; in particular, we indicate how our analysis can be extended to to a three-dimensional reduction of the Hodgkin-Huxley equations derived by Rubin and Wechselberger \cite{rubin2007giant} which generalises our extended prototypical example, Equation~\eqref{normal}. Finally, in Appendix~\ref{mechs}, we provide additional detail on SAO-generating mechanisms in the three-timescale context considered here.

\section{The double singular limit: geometry and singular cycles}
\seclab{singgeom}

In this section, we study the double singular limit of $\varepsilon=0=\delta$ in Equation~\eqref{normal}. To that end, we first describe the singular geometry for $\varepsilon=0$; then, we consider the resulting flow in the limit of $\delta\to 0$. Finally, we construct singular cycles which will form the basis of MMO trajectories for Equation~\eqref{normal} when $\varepsilon$ and $\delta$ are sufficiently small, as considered in \secref{singpert1} below.

\subsection{The critical manifold $\mathcal{M}_1$}

For $\varepsilon$ sufficiently small and $\delta=\mathcal{O}(1)$ fixed, Equation~\eqref{normal} is singularly perturbed with respect to the small parameter $\varepsilon$; in particular, \eqref{normal} describes the dynamics in terms of the \textit{intermediate} time $t$. Rewriting the governing equations in the fast time 
$\tau=t/\varepsilon$, we have
\begin{subequations}\eqlab{norm12fast}
\begin{align}
{x}' &= -y + f_2x^2+f_3x^3, \\
{y}' &= \varepsilon \lp \alpha x+\beta y{-z}\rp, \\
{z}' &=  \varepsilon \delta \lp \mu +\phi\lp x, y,z\rp\rp, 
\end{align}
\end{subequations}
which is a two-timescale system with one fast variable $x$ and two slow variables $y$ and $z$. The reduced problem of the above is obtained by setting $\varepsilon=0$ in \eqref{normal},
\begin{subequations}\eqlab{norm12red}
\begin{align}
0 &=-y+f_2x^2+f_3x^3, \\
\dot{y} &=\alpha x+\beta y{-z}, \\
\dot{z} &=\delta \lp \mu +\phi\lp x,y,z\rp\rp,
\end{align}
\end{subequations}
while the layer problem is found for $\varepsilon=0$ in \eqref{norm12fast}:
\begin{subequations}\eqlab{norm12lay}
\begin{align}
{x}' &=-y + f_2x^2+f_3x^3, \eqlab{norm12lay-a} \\
{y}' &=0, \eqlab{norm12lay-b}\\
{z}' &=0.	\eqlab{norm12lay-c}
\end{align}
\end{subequations}
We will refer to the flow that is induced by the one-dimensional vector field in Equation~\eqref{norm12lay} as the \textit{fast flow}; the corresponding trajectories will be denoted as the \textit{fast fibres}. The critical manifold $\mathcal{M}_1$ for \eqref{normal} is a set of equilibria for \eqref{norm12lay}, and is given by
\begin{align}\eqlab{M1}
\mathcal{M}_1 := 
\lb\lp x,y,z\rp \in\mb{R}^3~\big\lvert ~ f(x,y) =0 \rb =
\lb\lp x,y,z\rp \in\mb{R}^3~\big\lvert ~ y = F(x) \rb,
%\\ &=\lb  \lp x,y,z\rp \in\mb{R}^3~\big\lvert ~ y = f_2x^2+f_3x^3\rb
\end{align}
where we define
\begin{align}
F(x) = f_2x^2+f_3x^3 \eqlab{Fx}.
\end{align}
The manifold $\mathcal{M}_1$ can be written as $ \mathcal{M}_1 = \mathcal{S}^a \cup \mathcal{S}^r \cup \mathcal{F}_{\mathcal{M}_1} $, 
%is normally hyperbolic on the set
%\begin{align}
%\mathcal{S} := \lb\lp x,y,z\rp\in\mathcal{M}_1 ~\big\lvert~f_x(x,y,z) \neq0\rb
%=\mathcal{S}^r \cup \mathcal{S}^{a},
%\end{align}
where
\begin{align*}
\mathcal{S}^a = \lb \lp x,y,z\rp\in\mathcal{S} ~\Big|~ \frac{\partial f}{\partial x}(x,y) < 0\rb\quad\text{and}\quad \mathcal{S}^r =\lb\lp x,y,z\rp\in\mathcal{S} ~\Big|~ \frac{\partial f}{\partial x}(x,y)> 0\rb
\end{align*}
are normally attracting and normally repelling, respectively, whereas
$\mathcal{F}_{\mathcal{M}_1}$ is degenerate due to a loss of normal hyperbolicity:
\begin{align}
\mathcal{F}_{\mathcal{M}_1} := \lb\lp x,y,z\rp\in\mathcal{M}_1 ~\Big|~\frac{\partial f}{\partial x}(x,y) =0\rb
=\lb\lp x,y,z\rp \in\mathcal{M}_1~\big\lvert ~ x(2f_2+3f_3x) =0 \rb. 
\end{align}
In particular, we may write $\mathcal{F}_{\mathcal{M}_1} = \mathcal{L}^-\cup\mathcal{L}^+$, where
\begin{align}\eqlab{els}
\mathcal{L}^{-} = \lb \lp x,y,z\rp\in\mb{R}^3~\big|~x =0=y\rb\quad\text{and}\quad \mathcal{L}^+ = \lb \lp x,y,z\rp \in\mathbb{R}^3 ~\bigg|~ x =-\frac{2}{3}\frac{f_2}{f_3}\ \text{and}\ y = \frac{4}{27}\frac{f^3_2}{f^2_3}\rb;
\end{align}	
hence, it follows that $\mathcal{S}^a = \mathcal{S}^{a^-}\cup\mathcal{S}^{a^+}$, with
\begin{align}\eqlab{sa}
\mathcal{S}^{a^-} = \lb \lp x,y,z\rp\in\mathcal{S} ~|~ x<0\rb\quad\text{and}\quad
\mathcal{S}^{a^+} = \lb \lp x,y,z\rp\in\mathcal{S} ~\bigg|~ x>-\frac{2}{3}\frac{f_2}{f_3}\rb,
\end{align}
while
\begin{align}\eqlab{sr}
\mathcal{S}^{r} = \lb \lp x,y,z\rp\in\mathcal{S} ~\Big|~ 0<x<-\frac{2}{3}\frac{f_2}{f_3}\rb. 
\end{align}

The normally hyperbolic portion $\mathcal{S}$ of  $\mathcal{M}_1$ therefore consists of a repelling middle sheet $S^r$ and two attracting sheets $S^{a^\mp}$ that meet $\mathcal{S}^r$ along $\mathcal{L}^\pm$, respectively; see \figref{em1}(a).
From the above, it is apparent that $\mathcal{L}^-$ always coincides with the $z$-axis, whereas variation in $f_2$ and $f_3$ translates $\mathcal{L}^+$, therefore ``stretching'' or ``compressing'' $\mathcal{M}_1$. (Clearly, variation in $\alpha$, $\beta$, and $\mu$ has no effect on the geometry of $\mathcal{M}_1$.) Finally, the elements of the sets $\mathcal{Q}^\mp$ defined by
\begin{align*}
\mathcal{Q}^\mp=\lb (x,y,z) \in\mathcal{L}^\mp~\lvert~ f(x,y)=0=g(x,y,z)\rb
\end{align*}
are called the \textit{folded singularities} of $\mathcal{M}_1$ on $\mathcal{L}^\mp$, respectively \cite{szmolyan2001canards}; for \eqref{normal}, these sets are the singletons
$\mathcal{Q}^- = \lb q^-\rb$ and $\mathcal{Q}^+ = \lb q^+\rb$, with $q^\mp= (x_q^\mp,y_q^\mp,z_q^\mp)$  located at
\begin{gather}\eqlab{foldsing}
\begin{gathered}
x_q^-  = 0, \quad y_q^- =0, \quad\text{and}\quad z_q^-=0,\quad\text{as well as at} \\
x_q^+ =-\frac{2f_2}{3f_3},\quad y_q^+ = \frac{4f_2^3}{27f_3^2},\quad\text{and}\quad z_q^+ = {\frac{4\beta f_2^3}{27f_3^3}-\frac{2\alpha f_2}{3f_3}}, 
\end{gathered}
\end{gather}
respectively.
\begin{figure}[ht!]
	\centering
	\begin{subfigure}[b]{0.45\textwidth}
		\centering
		\includegraphics[scale = 0.40]{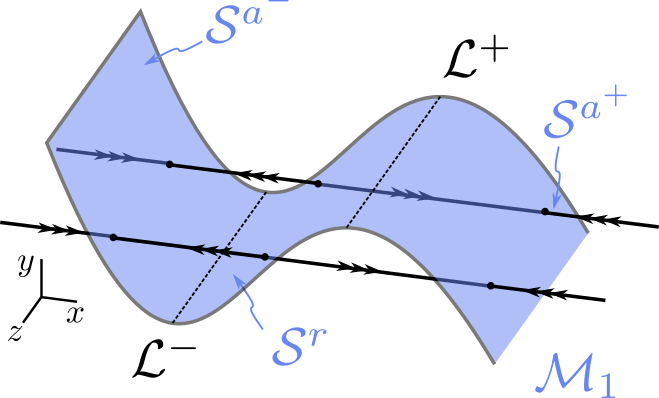}
		\caption{}
	\end{subfigure}
	~
	\begin{subfigure}[b]{0.45\textwidth}
		\centering
		\includegraphics[scale = 0.40]{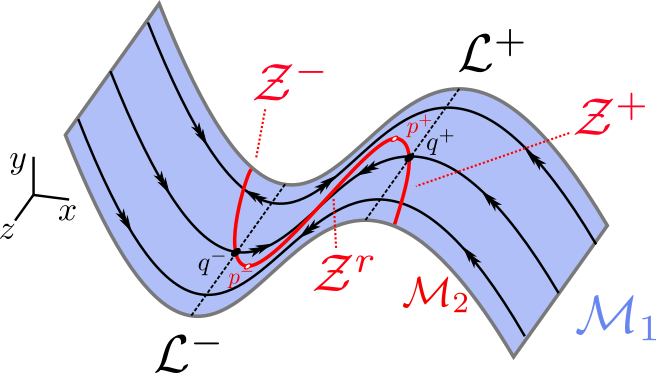}
		\caption{}
	\end{subfigure}
	\caption{(a) The critical manifold $\mathcal{M}_1$ as the set of equilibria for the fast flow of \eqref{norm12lay}; the fast fibres are parallel to the $x$-direction. (b) The supercritical manifold $\mathcal{M}_2$ as the set of equilibria for the intermediate flow of \eqref{intermediate}; the intermediate fibres are confined to $\mathcal{M}_2$ and evolve on planes with $z$ constant.}
	\figlab{em1}
\end{figure}

Finally, we consider the reduced problem on $ \mathcal{M}_1$, as given by \eqref{norm12red}, 
	with $\delta$ sufficiently small; Equation~\eqref{norm12red} is then singularly perturbed with respect to the small parameter $\delta$, written in the intermediate time $t$. To classify the folded singularities $q^\mp$ of $\mathcal{M}_1$, we project the flow of \eqref{norm12red} onto $\mathcal{M}_1$ \cite{szmolyan2001canards}:
recalling that $\mathcal{M}_1$ is defined by $ f(x,y) =0 $, we can apply the chain rule to find
\begin{align*}
-f_x\dot x = f_y\dot y,
\end{align*}
where $ f_x = 2f_2x+3f_3x^2 =F'(x)$ and $ f_y =-1 $, from \eqref{norm12red}. We therefore obtain
\begin{subequations}\eqlab{classy}
	\begin{align}
	-F'(x)\dot x &=-\alpha x-\beta F(x){+z}, \\
	\dot z &=\delta\lp\mu +\phi\lp x, F(x),z\rp\rp
	\end{align}
\end{subequations}
or
\begin{subequations}\eqlab{classy-des}
	\begin{align}
	\dot x & =-\alpha x-\beta F(x){+z}, \\
	\dot z &=-\delta F'(x) \lp\mu +\phi\lp x, F(x),z\rp \rp
	\end{align}
\end{subequations}
after a rescaling of time which introduces a factor of $ -F'(x) $ on the right-hand sides in \eqref{classy}, reversing the direction of the flow on $ \mathcal{S}^r $. The folded singularities of Equation~\eqref{normal} then correspond to equilibria for \eqref{classy-des}; specifically, for $\delta$ positive, the folded singularities $q^\mp$ are \emph{folded nodes} \cite{szmolyan2001canards,letson2017analysis}. Their strong and weak stable manifolds define ``funnel regions'' on the corresponding sheets $\mathcal{S}^{a^\mp}$, which essentially determine the basins of attraction to $q^\mp$ on $ \mathcal{S}^{a^\mp} $. Here and in the following, we focus on the flow of Equation~\eqref{normal} in the vicinity of the fold line $\mathcal{L}^-$; with regard to the strong stable manifold of the folded node $q^-$, we hence have the following result:
\begin{lemma}
Let 
\begin{gather}
\mathcal{G}\lp x_0,x_1;z_0;\mu\rp = \int_{x_0}^{x_1} \frac{ F'(\sigma)\lp \mu+\phi\lp \sigma,F(\sigma),z_0\rp\rp}{\alpha\sigma+\beta F(\sigma)-z_0} \tn{d}\sigma, 
\eqlab{GiF}
\end{gather}
where $F$ is defined as in \eqref{Fx}.
Then, for $\delta$ sufficiently small, the strong stable manifold of the origin for Equation~\eqref{classy-des} can be written as the graph
\begin{gather}
z = \delta \mathcal G(0,x;0,\mu) + \mathcal{O}(\delta^2)\qquad\text{for }x\in I,
\end{gather}
where $I$ is an appropriately defined, fixed interval about $x=0$.
\lemmalab{strong}
\end{lemma}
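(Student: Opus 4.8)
The plan is to realise the strong stable manifold $W^{\mathrm{ss}}$ of the origin as a graph $z=h(x,\delta)$ over the fold coordinate $x$ and to compute its $\delta$-expansion from the invariance equation. First I would record the linear data at the origin. Since $F(0)=F'(0)=0$ and $F''(0)=2f_2$, the linearisation of \eqref{classy-des} at $(x,z)=(0,0)$ is
\begin{equation*}
D=\begin{pmatrix}\alpha & -1\\[2pt] 2\delta f_2\lp\mu+\phi(0,0,0)\rp & 0\end{pmatrix},
\end{equation*}
with eigenvalues $\lambda_{\mathrm{s,w}}=\tfrac12\lp\alpha\mp\sqrt{\alpha^2-8\delta f_2(\mu+\phi(0,0,0))}\,\rp$. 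Under the standing folded-node assumptions these are real and of the same sign, with $\lambda_{\mathrm s}=\alpha+\mathcal O(\delta)$ bounded away from $0$ and $\lambda_{\mathrm w}=\tfrac{2\delta f_2(\mu+\phi(0,0,0))}{\alpha}+\mathcal O(\delta^2)=\mathcal O(\delta)$; the strong eigenvector is $(1,0)+\mathcal O(\delta)$, i.e.\ asymptotically horizontal. For each fixed $\delta>0$ sufficiently small, the strong stable manifold theorem thus yields a unique one-dimensional invariant manifold $W^{\mathrm{ss}}$ tangent to that eigenvector; being transverse to the $z$-axis, it is a graph $z=h(x,\delta)$ on an interval $I$ about $x=0$ with $h(0,\delta)=0$.

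Second, I would pin down the singular limit. At $\delta=0$ the system \eqref{classy-des} has $z'\equiv0$, so its fast fibres are the horizontal lines $\{z=\mathrm{const}\}$ and the fibre through the origin is $\{z=0\}$; consequently $h(\,\cdot\,,0)\equiv0$, which motivates the ansatz $h(x,\delta)=\delta\,h_1(x)+\mathcal O(\delta^2)$. Imposing invariance, i.e.\ differentiating $z=h(x,\delta)$ along the flow and substituting \eqref{classy-des}, gives the scalar equation
\begin{equation*}
h'(x,\delta)\,\big[\alpha x+\beta F(x)-h(x,\delta)\big]=\delta\,F'(x)\,\big(\mu+\phi(x,F(x),h(x,\delta))\big).
\end{equation*}
Inserting the ansatz and collecting the $\mathcal O(\delta)$ terms — at which order $h$ may be set to $0$ both inside the bracket and inside $\phi$ — reduces this to
\begin{equation*}
h_1'(x)\,\big(\alpha x+\beta F(x)\big)=F'(x)\,\big(\mu+\phi(x,F(x),0)\big).
\end{equation*}
Integrating from $x=0$ with $h_1(0)=0$ then yields
\begin{equation*}
h_1(x)=\int_0^x\frac{F'(\sigma)\,\lp\mu+\phi(\sigma,F(\sigma),0)\rp}{\alpha\sigma+\beta F(\sigma)}\,\mathrm d\sigma=\mathcal G(0,x;0;\mu),
\end{equation*}
which is precisely \eqref{GiF} with $x_0=0$ and $z_0=0$, establishing $z=\delta\,\mathcal G(0,x;0;\mu)+\mathcal O(\delta^2)$ on $I$.

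The delicate points I would treat carefully are twofold. The integrand above is a priori singular at the fold $\sigma=0$, since both $\alpha\sigma+\beta F(\sigma)=\sigma\lp\alpha+\beta f_2\sigma+\beta f_3\sigma^2\rp$ and $F'(\sigma)=\sigma\lp2f_2+3f_3\sigma\rp$ vanish there; the common factor $\sigma$ cancels, however, leaving the finite limit $2f_2(\mu+\phi(0,0,0))/\alpha$ at $\sigma=0$ (using $\alpha\neq0$), so $h_1$ is smooth on any interval $I$ about $x=0$ on which $\alpha\sigma+\beta F(\sigma)\neq0$ for $\sigma\neq0$ — this is precisely what fixes the admissible $I$. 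The more substantial issue, which I expect to be the main obstacle, is the smooth dependence of $W^{\mathrm{ss}}$ on $\delta$ down to $\delta=0$, where the origin ceases to be hyperbolic for the planar system (it becomes an ordinary point of the critical curve $z=\alpha x+\beta F(x)$). I would resolve this by observing that $W^{\mathrm{ss}}$ is the strong stable fibre of the origin regarded as a point of that critical curve, which is normally hyperbolic there because $\partial_x(x')=\alpha+\beta F'(0)=\alpha\neq0$; Fenichel's fibre theorem, together with its smooth dependence on parameters, then guarantees the required smoothness in $\delta$ and legitimises the asymptotic expansion. Alternatively, the regular cancellation just noted makes the $\delta$-expansion verifiable directly, order by order, via the invariance equation.
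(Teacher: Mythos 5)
Your proof is correct, and it reaches the paper's formula by a genuinely more self-contained route. The paper's own proof is a two-line trajectory argument: it invokes the first-order displacement estimate of \cite{krupa2008mixed} --- for a trajectory of \eqref{classy-des} launched at $(x_0,F(x_0),z_0)$, the $z$-displacement after an $x$-displacement $s$ is $\delta\,\mathcal{G}(x_0,x_0+s;z_0;\mu)$ in a first approximation --- and then sets $x_0=0=z_0$, identifying the resulting curve with ``the unique trajectory of \eqref{classy-des} that passes through the origin''. You instead characterise the object directly as the strong invariant manifold of the node: linearisation, strong-manifold theorem, graph representation $z=h(x,\delta)$, and the invariance equation whose $\mathcal{O}(\delta)$ term integrates to $\mathcal{G}(0,x;0;\mu)$. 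The underlying computation is the same (leading-order integration of $dz/dx$ along the flow), but your packaging settles points the paper leaves implicit or outsources to the citation: (i) why the distinguished curve is the strong manifold rather than one of the infinitely many orbits limiting to the node along the weak direction --- indeed your slope $h'(0,\delta)=\delta\,2f_2\lp\mu+\phi(0,0,0)\rp/\alpha+\mathcal{O}(\delta^2)$ matches the strong eigenvector, whereas the weak-direction orbits hug $\mathcal{M}_2$; (ii) why the $\delta$-expansion is legitimate on a fixed interval $I$ even though hyperbolicity of the origin degenerates as $\delta\to0$ --- identifying $W^{\mathrm{ss}}$ with the Fenichel fibre of the origin over the normally hyperbolic critical curve $z=\alpha x+\beta F(x)$ (the origin being an equilibrium for all $\delta$, so its fibre is invariant) is exactly the right fix; and (iii) why the integrand in \eqref{GiF} is regular at the fold, via the simple-zero cancellation leaving the limit $2f_2\lp\mu+\phi(0,0,0)\rp/\alpha$, which is also what makes the ``appropriately defined interval $I$'' precise. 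One cosmetic slip: your $\mp$ assignment in the eigenvalue formula is reversed --- the strong eigenvalue takes the $+$ sign, $\lambda_{\mathrm{s}}=\tfrac12\lp\alpha+\sqrt{\alpha^2-8\delta f_2(\mu+\phi(0,0,0))}\rp=\alpha+\mathcal{O}(\delta)$ --- but the asymptotics you then state are the correct ones, so nothing downstream is affected.
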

\begin{proof}
Given a trajectory of \eqref{classy-des} with initial condition $ (x_0, y_0,z_0) $ on $ \mathcal{S}^{a^\mp} $, i.e., with $y_0=F(x_0)$, let $ s $ denote the displacement in the $ x $-direction of that trajectory under the corresponding flow. Then, in a first approximation, the displacement in the $ z $-direction is given by $ \delta\mathcal{G}\lp x_0,x_0+s;z_0;\mu\rp $, where $ \mathcal{G} $ is defined as in \eqref{GiF}; see \cite{krupa2008mixed} for details. The result is obtained by setting $x_0=0=z_0$ in the resulting expression, which corresponds to the unique trajectory of \eqref{classy-des} that passes through the origin.
\end{proof}
An analogous representation can be obtained for the strong stable manifold of the folded node $ q^+ $. From the above, we conclude in particular that the funnels of the folded singularities $q^\mp$ are ``stretched'' as $\delta$ decreases. In the limit of $\delta = 0$, $q^\mp$ are (degenerate) \emph{folded saddle-nodes}; see again \cite{szmolyan2001canards,letson2017analysis} for details. For future reference, we note that the associated strong manifolds (``strong canards") correspond to the unique intermediate fibres on $\mathcal{S}^{a^\mp}$ that cross $q^\mp$, respectively, while the corresponding weak manifolds (``weak canards") can be locally approximated by the supercritical manifold $ \mathcal{M}_{2}$ which is introduced in the following subsection.

\subsection{The supercritical manifold $\mathcal{M}_2$}

We can view the differential-algebraic systems in \eqref{norm12red} and \eqref{classy-des} as slow-fast vector fields on $\mathcal{M}_1$. The layer problem corresponding to \eqref{norm12red} therefore reads
\begin{subequations}\eqlab{intermsing}
	\begin{align}
	0 &=-y+F(x), \eqlab{intermsing-a}\\
	\dot{y} &=\alpha x+\beta y{-z}, \eqlab{intermsing-b} \\
	\dot{z} &=0\eqlab{intermsing-c}
	\end{align}
\end{subequations}
or
\begin{subequations}\eqlab{intermediate}
	\begin{align}
	0 &=-y+F(x), \\
	-F'(x)\dot x &=-\alpha x-\beta F(x){+z}, \\
	\dot{z} &=0;
	\end{align}
\end{subequations}
we will refer to the above as the \textit{intermediate flow}, and to the corresponding trajectories as the \textit{intermediate fibres}; see panel (b) of \figref{em1}. We emphasise that the intermediate flow is not defined on the fold lines $\mc{L}^\mp$, whereon $F'(x)=0$.

Rewriting Equation~\eqref{normal} in the slow time $s=\delta t$, we have
\begin{subequations}\eqlab{M1slowtime}
	\begin{align}
	\varepsilon \delta{x}' &= -y +F(x), \\
	\delta{y}' &= \alpha x+\beta y{-z}, \\
	{z}' &=  \mu +\phi\lp x, y,z\rp; 
	\end{align}
\end{subequations}
the reduced system that is obtained from \eqref{M1slowtime} is given by
\begin{subequations}\eqlab{slow}
	\begin{align}
	0 &=-y{+F(x)}, \\
	0 &=\alpha x+\beta F(x){-z}, \\
	z' &=\mu +\phi\lp x, F(x),z\rp,
	\end{align}
\end{subequations}
which we will refer to as the \textit{slow flow} of Equation~\eqref{normal}.
Away from $\mathcal{L}^\mp$, the supercritical manifold $\mathcal{M}_2$ is the set of equilibria for \eqref{intermediate}, 
%see \cite{cardin2017fenichel}, 
and is given by
\begin{align}
\begin{aligned}
\mathcal{M}_2 :&= \lb\lp x,y,z\rp \in\mb{R}^3~\big\lvert ~ f(x,y) =0=g(x,y,z)\rb 
%&= \lb\lp x,y,z\rp \in\mathcal{M}_1~\big\lvert ~ g(x,F(x),z) =0\rb
= \lb\lp x,y,z\rp \in\mathcal{M}_1~\big\lvert ~ {z=G(x)}\rb,
%\\ &=\lb  \lp x,y,z\rp \in\mb{R}^3~\big\lvert ~ y = f_2x^2+f_3x^3\rb
\end{aligned}\eqlab{M2}
\end{align}
where we define
\begin{align}
G(x) = \alpha x + \beta F(x) \eqlab{Gx};
\end{align}
in that notation, the coordinates of the folded singularities in \eqref{foldsing} can be expressed as $y_{q}^\mp = F(x_{q}^\mp)$ and $z_{q}^\mp = G(x_{q}^\mp)$.
%The reduced dynamics on $\mathcal{M}_1$ can be projected onto the $xz$-plane as follows. By definition, $\mathcal{M}_1$ is given by the constraint $f(x,y)=0$. By differentiating this constraint implicitly with respect to the intermediate time $t$ we obtain:
%\begin{align*}
%-f_x \dot{x} &= f_yg + \delta f_z h\\
%\dot{z}&=\delta h
%\end{align*}
%By transforming time by a factor of $-f_x$ we obtain the system:
%\begin{align*}
%\dot{x} &= f_yg + \delta f_z h\\
%\dot{z}&=-\delta f_xh
%\end{align*}
The manifold $\mathcal{M}_2$ can be written as the union $ \mathcal{M}_2 = \mathcal{Z}\cup \mathcal{F}_{\mathcal{M}_2}$, where
\begin{align}
\mathcal{Z}=\lb\lp x,y,z\rp\in\mathcal{M}_2 ~\Big|~\frac{{dg}}{{d}x}\lp x,F(x),G(x)\rp \neq0\rb
\end{align}
is normally hyperbolic 
%\begin{align}
%\mathcal{Z}^a=\lb\lp x,y,z\rp\in\mathcal{M}_2 ~\big\lvert~g_x(x,F(x),z) <0\rb, \quad \mathcal{Z}^r=\lb\lp x,y,z\rp\in\mathcal{M}_2 ~\big\lvert~g_x(x,F(x),z) >0\rb,
%\end{align}
and the set
\begin{align}
\begin{aligned}
\mathcal{F}_{\mathcal{M}_2} := \lb \lp x,y,z\rp\in\mathcal{M}_2 ~\Big|~\frac{{dg}}{{d}x}\lp x,F(x),G(x)\rp =0\rb
=\lb\lp x,y,z\rp\in\mathcal{M}_2 ~\big\lvert~\alpha + 2\beta f_2x+3\beta f_3x^2 =0\rb
\end{aligned}\eqlab{nhZ}
\end{align}
is degenerate. Equation~\eqref{nhZ} yields $\mathcal{F}_{\mathcal{M}_2} = \lb p^-,p^+\rb$, with
\begin{align}
p^\mp=\lb\lp x,y,z\rp\in\mathcal{M}_2 ~\big\lvert~ x = x^{\mp}_p\rb, \eqlab{folp}
\end{align}	
where
\begin{align}\eqlab{Zfolds}
x^{\mp}_p=\frac{-\beta f_2{\pm}\sqrt{\beta^2f_2^2-3\alpha\beta f_3}}{3\beta f_3},\quad y^{\mp}_p = F\lp x^{\mp}_p\rp,\quad\text{and}\quad z^{\mp}_p = G\lp x^{\mp}_p\rp.
\end{align}
The points $ p^\mp $ are called the fold points of $ \mathcal{M}_2 $. Equation~\eqref{Zfolds} immediately implies
\begin{proposition}
	The manifold $\mathcal{M}_2$ admits
	\begin{enumerate}
		\item exactly two fold points if and only if $\beta^2f_2^2-3\alpha\beta f_3>0$; 
		\item exactly one fold point if and only if $\beta^2f_2^2-3\alpha\beta f_3 = 0$; and
		\item no fold points if and only if $\beta^2f_2^2-3\alpha\beta f_3<0$. 
	\end{enumerate}
	\proplab{zfoldsprop}
\end{proposition}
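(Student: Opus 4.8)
The plan is to reduce the statement to an elementary discriminant analysis of the quadratic that defines $\mathcal{F}_{\mathcal{M}_2}$ in \eqref{nhZ}. The starting observation is that the fold points of $\mathcal{M}_2$ are exactly the critical points of the cubic $G$ from \eqref{Gx}: since $g(x,y,z)=\alpha x+\beta y-z$ and $\mathcal{M}_2$ is parametrised by $x$ through $y=F(x)$ and $z=G(x)$, the chain rule gives $\frac{dg}{dx}\lp x,F(x),G(x)\rp = G'(x) = \alpha + 2\beta f_2 x + 3\beta f_3 x^2$, which is precisely the defining relation in \eqref{nhZ}.

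First I would note that, because $f_3<0$ and $\beta\neq 0$, the leading coefficient $3\beta f_3$ is nonzero, so $G'(x)=0$ is a genuine quadratic equation in $x$. A direct computation then shows that its discriminant equals $\lp 2\beta f_2\rp^2 - 4\lp 3\beta f_3\rp\alpha = 4\lp\beta^2 f_2^2 - 3\alpha\beta f_3\rp$, a positive multiple of the quantity in the statement. The standard trichotomy for quadratics with nonvanishing leading coefficient---two, one, or no distinct real roots according to the sign of the discriminant---then yields the three cases verbatim, and the corresponding roots are exactly those recorded in \eqref{Zfolds}.

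Finally I would check that distinct real roots correspond to distinct fold points and that each root yields a single point of $\mathcal{F}_{\mathcal{M}_2}$. This is immediate from the fact that $\mathcal{M}_2$ is a graph over the $x$-axis: the coordinates $y=F(x)$ and $z=G(x)$ are determined uniquely by $x$ via \eqref{Fx} and \eqref{Gx}, so the roots of $G'(x)=0$ are in bijection with $\mathcal{F}_{\mathcal{M}_2}$ through the parametrisation \eqref{folp}. I do not expect a substantive obstacle in this argument; the only points meriting care are the standing assumption $\beta\neq 0$---without which \eqref{nhZ} degenerates and no fold points arise, consistent with the regular intermediate dynamics of the prototypical system \eqref{prototypical}---and the routine algebra confirming that the discriminant of $3\beta f_3 x^2 + 2\beta f_2 x + \alpha$ collapses to $4\lp\beta^2 f_2^2 - 3\alpha\beta f_3\rp$.
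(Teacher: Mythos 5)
Your proposal is correct and follows essentially the same route as the paper: the paper derives the explicit roots in \eqref{Zfolds} via the quadratic formula applied to $\alpha+2\beta f_2x+3\beta f_3x^2=0$ from \eqref{nhZ} and states that the trichotomy follows immediately from the sign of the quantity under the square root, which is exactly your discriminant computation up to the factor $4$. Your explicit flagging of the $\beta\neq0$ degeneracy is a welcome extra precision (the paper only records it afterwards, as a necessary condition for two fold points), but it does not constitute a different argument.
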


\begin{remark}
Under the conditions stated in \propref{zfoldsprop}, the fold points $p^\mp$ of $\mathcal{M}_2$ are ``inherited'' from the fold lines $ \mathcal{L}^\mp $ of $\mathcal{M}_1$, in the sense that $G(x)$ in \eqref{M2} is a cubic polynomial because $F(x)$ in \eqref{M1} is.
\end{remark}

We note that a necessary (but not sufficient) condition for $\mathcal{M}_2$ to have two fold points is the requirement that $\beta\neq0$. If $\mathcal{M}_2$ admits two fold points, then the normally hyperbolic portion $\mathcal{Z}$ of $\mathcal{M}_2$ consists of three branches:
$\mathcal{Z}=\mathcal{Z}^-\cup \mathcal{Z}^{0}\cup\mathcal{Z}^{+}$,
where
\begin{gather}
\begin{gathered}
\mathcal{Z}^{-}=\lb\lp x,y,z\rp\in\mathcal{Z} ~\lvert~ x<x_p^{-}\rb,\quad \mathcal{Z}^{0}=\lb\lp x,y,z\rp\in\mathcal{Z} ~\lvert~ x_p^{-}<x<x_p^{+}\rb,\quad\text{and} \\
\mathcal{Z}^{+}=\lb\lp x,y,z\rp\in\mathcal{Z} ~\lvert~ x>x_p^{+}\rb.
\end{gathered}\eqlab{zbranches}
\end{gather}

A rescaling of time in \eqref{intermediate} by a factor of $-F'(x)$, as was done in \eqref{classy-des}, reverses the orientation on $\mc{S}^r$, whereas it preserves it on $\mc{S}^{a^\mp}$. If the supercritical manifold $\mathcal{M}_2$ admits two fold points, i.e., if $\beta^2f_2^2-3\alpha\beta f_3>0$ by \propref{zfoldsprop}, then the portion of the middle branch $\mathcal{Z}^0\cap \mc{S}^r$ of $\mathcal{M}_2$ in \eqref{zbranches} is attracting, respectively repelling, under the flow of the desingularised Equation~\eqref{intermediate} for  $\beta<0$, respectively $\beta>0$. That is, the stability properties of $\mc{Z}$ within $\mc{S}$ -- i.e., within $\mc{M}_1$ and away from $\mc{L}^\mp$ -- are determined from the scalar equation $\dot{x} =-\alpha x-\beta F(x){+z}$,  with the stability of $\mc{Z}^0$ being reversed on $\mc{S}^r$, cf. \figref{m2bif}. It follows that $\mc{Z}^0$ could potentially be separated into attracting portions in $\mc{S}^r$ and repelling ones in $\mc{S}^{a^\mp}$, cf. \figref{m2bif}(a), or vice versa, see \figref{m2bif}(d). A similar argument applies to the outer branches $\mc{Z}^\mp$, as seen in panels (c) and (f) of \figref{m2bif}.

\begin{remark}
We remark that the above discussion of the stability of $\mc{Z}$ in the double singular limit of $\varepsilon=0=\delta$ is alternative to the approach outlined in \cite{cardin2017fenichel}, where $x$ is expressed as a function of $y$ in \eqref{intermsing-b} via the algebraic constraint in \eqref{intermsing-a}, as well as to that in \cite{letson2017analysis}, where only the stability of the partially perturbed counterpart of $\mc{Z}$ is investigated; see Appendix~\ref{mechs} for an extension of the latter within the framework of Equation~\eqref{normal}.
\end{remark}

\begin{remark}
By the above, the folded singularities $q^\mp$ of $\mathcal{M}_1$ are located at the intersections between $\mathcal{M}_2$ and $\mathcal{L}^\mp$. In the double singular limit of $ \varepsilon=0=\delta$, the points $q^\mp$ coincide with the folded singularities of $ \mathcal{M}_1 $ for $\varepsilon =0$ and $\delta=\mathcal{O}(1)$, i.e., in the two-timescale limit, which stems from the fact that the fast and intermediate Equations~\eqref{normal-a} and \eqref{normal-b} do not depend on $ \delta$ in our case.
\end{remark}

% The linearized Equation~\eqref{classy} around the folded singularity $ q^- $ reads
%\begin{align*}
%\dot{x} &=\alpha x{-z}+\mathcal{O}(y), \\
%\dot{z} &=2\delta f_2{\mu}x;
%\end{align*}
%the corresponding eigenvalues are
%\begin{align*}
%\omega_{s,w}=\frac{1}{2}\lp\alpha\pm\sqrt{\alpha^2{-8\delta f_2\mu}}\rp.
%\end{align*}
%The above system then admits the explicit solution
%\begin{align}\eqlab{cans}
%{\gamma_{s,w}=\lp\frac{\omega_{s,w}}{2f_2}t,\frac{\omega_{s,w}^2}{4f_2}t^2-\frac{\omega_{s,w}}{2f_2},\delta\mu t\rp}
%\end{align}
%with $(x,y,z)(0)=(0,0,0)$,
%where $\gamma_s$ denotes the singular strong canard and $\gamma_w$ is the singular weak canard; see \cite{szmolyan2001canards}. 
\subsection{Relative geometry}

In this subsection, we describe the position of the folded singularities $q^\mp$ of $\mathcal{M}_1$ relative to each other, as well as of the fold points $p^\mp$ of $ \mathcal{M}_2 $ -- assuming that a pair of such points exists -- relative to the fold lines $\mathcal{L}^\mp$.

\begin{proposition}
	Assume that $\mathcal{M}_2$ admits two fold points, i.e., that $\beta^2f_2^2-3\alpha\beta f_3>0$, by \propref{zfoldsprop}. 
	\begin{enumerate}
		\item If $\alpha\beta<0$, then both fold points of  $\mathcal{M}_2$ lie on $\mathcal{S}^r$; 
		\item if $\alpha\beta>0$, then one fold point of $\mathcal{M}_2$ lies on $\mathcal{S}^{a^-}$, while the other fold point lies on $\mathcal{S}^{a^+}$; and
		\item if $\alpha=0$, then one fold point of $\mathcal{M}_2$ lies on $\mathcal{L}^-$, while the other fold point lies on $\mathcal{L}^+$.
	\end{enumerate}
	\proplab{relfold}
\end{proposition}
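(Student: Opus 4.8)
The plan is to reduce the defining equation of the fold points to a condition on the slope $F'$ of the critical-manifold profile, and then read off the three cases from the sign of that slope. Since $F'(x) = 2f_2 x + 3f_3 x^2$, the fold condition $\alpha + 2\beta f_2 x + 3\beta f_3 x^2 = 0$ in \eqref{nhZ} is exactly $\alpha + \beta F'(x) = 0$; because two fold points exist only if $\beta \neq 0$ (the necessary condition noted after \propref{zfoldsprop}), this reads
\begin{align}
F'\lp x_p^\mp\rp = -\frac{\alpha}{\beta}. \eqlab{slopecond}
\end{align}
The point of \eqref{slopecond} is that the sign of $F'$ records the sheet of $\mathcal{M}_1$ on which a point sits: since $\frac{\partial f}{\partial x} = F'$, the characterisations of $\mathcal{S}^{a^\mp}$, $\mathcal{S}^r$, and $\mathcal{L}^\mp$ give $F'(x) > 0$ precisely on $\mathcal{S}^r$ (i.e.\ for $0 < x < x_q^+$, with $x_q^+ = -\tfrac{2}{3}f_2/f_3$), $F'(x) < 0$ precisely on $\mathcal{S}^{a^-}\cup\mathcal{S}^{a^+}$, and $F'(x) = 0$ precisely on $\mathcal{L}^-\cup\mathcal{L}^+$.

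With this in hand I would dispatch the three cases directly. If $\alpha\beta < 0$, then $-\alpha/\beta > 0$, so \eqref{slopecond} forces $F'(x_p^\mp) > 0$ and hence $x_p^\mp \in (0, x_q^+)$; both fold points lie on $\mathcal{S}^r$, which is case (1). If $\alpha = 0$, then \eqref{slopecond} becomes $F'(x_p^\mp) = 0$, whose roots are exactly $x = 0$ and $x = x_q^+$, so the two fold points sit on $\mathcal{L}^-$ and $\mathcal{L}^+$ respectively, which is case (3) (and here $\beta^2 f_2^2 - 3\alpha\beta f_3 = \beta^2 f_2^2 > 0$ is automatic for $\beta \neq 0$, consistent with the two-fold-point hypothesis). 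For $\alpha\beta > 0$ we have $-\alpha/\beta < 0$, so both fold points satisfy $F'(x_p^\mp) < 0$ and therefore lie on $\mathcal{S}^{a^-}\cup\mathcal{S}^{a^+}$; what remains in case (2) is to show they land on \emph{different} attracting sheets.

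This last separation is the only step that needs more than the sign of $F'$, and I regard it as the main (if modest) obstacle, since a priori both roots could be negative or both exceed $x_q^+$. I would settle it with the product of the roots of the quadratic $3\beta f_3 x^2 + 2\beta f_2 x + \alpha$, namely $x_p^- x_p^+ = \alpha/(3\beta f_3)$: when $\alpha\beta > 0$ one has $\alpha/\beta > 0$, and as $f_3 < 0$ this product is negative, so the two roots have opposite signs. The negative root then lies automatically in $\mathcal{S}^{a^-} = \{x < 0\}$, while the positive root, having $F' < 0$ by \eqref{slopecond}, cannot lie in the interval $(0, x_q^+)$ where $F' > 0$ and must satisfy $x > x_q^+$, placing it on $\mathcal{S}^{a^+}$; this completes case (2). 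Everything could alternatively be verified by inserting the explicit expressions for $x_p^\mp$ from \eqref{Zfolds} and comparing them with $0$ and $x_q^+$, but the slope reformulation \eqref{slopecond} makes the underlying geometry transparent and renders the case distinctions immediate.
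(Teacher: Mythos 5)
Your proof is correct. It differs in execution from the paper's, which simply compares the explicit roots $x_p^\mp$ given by the quadratic formula in \eqref{Zfolds} against the $x$-coordinates $0$ and $-\frac{2f_2}{3f_3}$ of the fold lines $\mathcal{L}^\mp$. You instead rewrite the fold condition of \eqref{nhZ} as $F'(x_p^\mp) = -\alpha/\beta$ and observe that the sign of $F' = \partial f/\partial x$ is precisely the quantity that distinguishes $\mathcal{S}^{a^\mp}$ (negative), $\mathcal{S}^r$ (positive), and $\mathcal{L}^\mp$ (zero); cases (1) and (3) then follow from the sign of $-\alpha/\beta$ alone, and the only genuine work --- showing in case (2) that the two roots land on \emph{different} attracting sheets --- is settled by Vieta's product formula $x_p^- x_p^+ = \alpha/(3\beta f_3) < 0$ rather than by inspecting the radical. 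Your route is more conceptual: it exposes the proposition as the statement that the fold points of $\mathcal{M}_2$ are exactly the points where the graph of $F$ has slope $-\alpha/\beta$, and it sidesteps any manipulation of the square root in \eqref{Zfolds}; the paper's route is shorter on paper only because it leaves the three sign comparisons to the reader. Both arguments are elementary and complete, and your treatment of case (2) supplies a detail (the opposite-sign argument for the two roots, ruling out that both land on the same attracting sheet) that the paper's one-line proof glosses over entirely.
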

\begin{proof}
	The result follows from a comparison of the values of $x^{\mp}$ in \eqref{Zfolds} with the $x$-coordinates of $\mathcal{L}^\mp$ in the three cases where $\alpha\beta<0$, $\alpha\beta>0$, and $\alpha=0$, respectively.
\end{proof}

\propref{relfold} is again summarised in \figref{m2bif}. We remark that the symmetry described in \propref{relfold} breaks down when $\mathcal{O}(x^2)$-terms are included in the intermediate Equation~\eqref{normal-b}; see \cite{desroches2018spike} for an example. If $\beta=0$, then the projection of the critical manifold $\mathcal{M}_2$ onto the $(x,z)$-plane is a straight line. That case has been studied in \cite{krupa2008mixed,de2014three,de2016sector}; recall also Equation~\eqref{prototypical}.

\begin{figure}[ht!]
	\centering
	\begin{subfigure}[b]{0.3\textwidth}
		\centering
		\includegraphics[scale = 0.3]{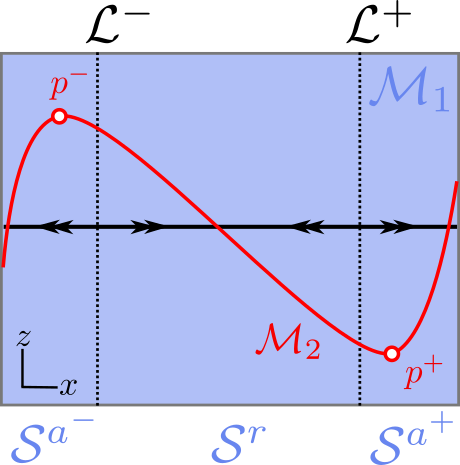}
		\caption{$\beta<0$, $\alpha<0$.}
	\end{subfigure}
	~
	\begin{subfigure}[b]{0.3\textwidth}
		\centering
		\includegraphics[scale = 0.3]{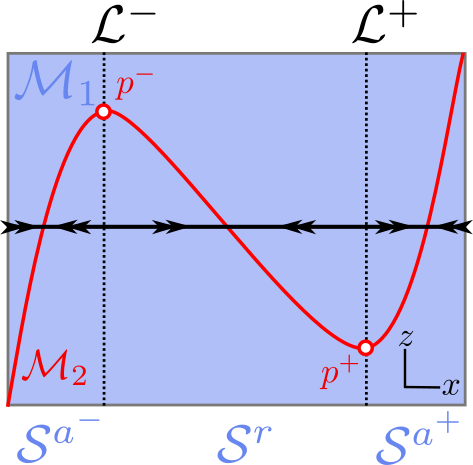}
		\caption{$\beta<0$, $\alpha=0$.}
	\end{subfigure}
	~
	\begin{subfigure}[b]{0.3\textwidth}
		\centering
		\includegraphics[scale = 0.3]{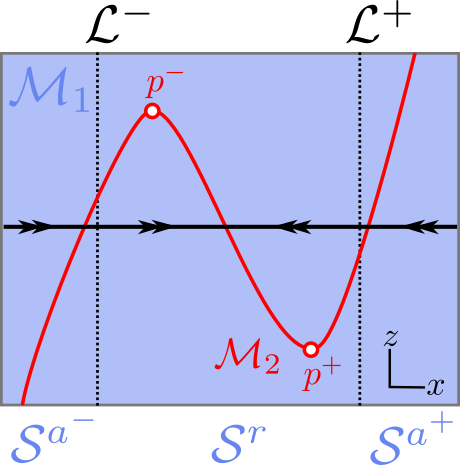}
		\caption{$\beta<0$, $\alpha>0$.}
	\end{subfigure} \\
	\vspace*{0.2in}
	\begin{subfigure}[b]{0.3\textwidth}
		\centering
		\includegraphics[scale = 0.3]{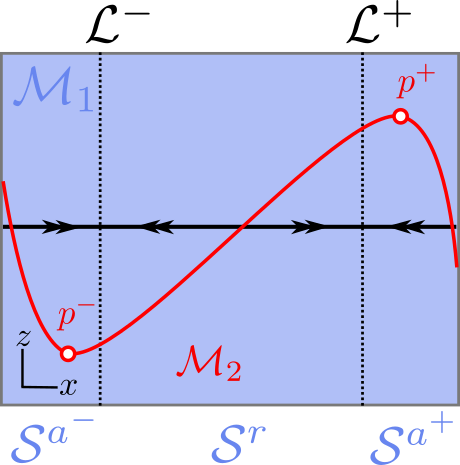}
		\caption{$\beta>0$, $\alpha<0$.}
	\end{subfigure}
	~
	\begin{subfigure}[b]{0.3\textwidth}
		\centering
		\includegraphics[scale = 0.3]{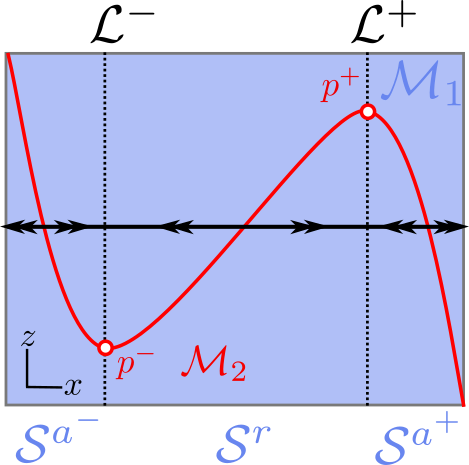}
		\caption{$\beta>0$, $\alpha=0$.}
	\end{subfigure}
	~
	\begin{subfigure}[b]{0.3\textwidth}
		\centering
		\includegraphics[scale = 0.3]{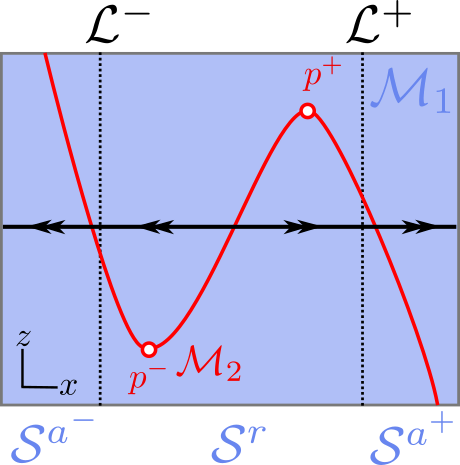}
		\caption{$\beta>0$, $\alpha>0$.}
	\end{subfigure}
	\caption{Projection of the supercritical manifold $\mathcal{M}_2$ and of the fold lines $\mathcal{L}^\mp$ of the critical manifold $\mathcal{M}_1$ onto the $(x,z)$-plane: in dependence on the parameters $\alpha$ and $\beta$, the pair of fold points $ p^{\mp} $ of $\mathcal{M}_2$ lies either on $\mathcal{S}^r$ (panels (c) and (f)), on $\mathcal{S}^{a^\mp}$ (panels (a) and (d)), or on $\mathcal{L}^\mp$ (panels (b) and (e)).}
	\figlab{m2bif}
\end{figure}

We now turn our attention to the location of the folded singularities of $\mathcal{M}_1$ relative to each other and with respect to the fast and intermediate fibres defined previously; recall \figref{em1}. We first define planes that contain the folded singularities and that are perpendicular to the fold lines $\mathcal{L}^-$ and $\mathcal{L}^+$, as follows. 
\begin{definition}
	Denote by $\mathcal{P}^\mp$ the planes 
	$\mathcal{P}^\mp = \lb (x,y,z)\in\mb{R}^3~\lvert~ z = z_q^\mp\rb$,
	where $z_q^\mp$ are the $z$-coordinates of the folded singularities $q^\mp$ of $\mathcal{M}_1$ on $\mathcal{L}^\mp$, respectively. We will refer to $\mathcal{P}^\mp$ as \emph{normal planes} in the following.
\end{definition}

\begin{definition}
	The folded singularities $q^\mp$ of $\mathcal{M}_1$ are said to be 
	\begin{enumerate}
		\item aligned if $\mathcal{P}^-\equiv \mathcal{P}^+$;
		\item connected if they are not aligned and if $\mathcal{P}^\mp\cap\mathcal{Z}^\pm \neq \emptyset$; or 
		\item remote if they are neither aligned nor connected, i.e., if $\mathcal{P}^-\not\equiv \mathcal{P}^+$ and $\mathcal{P}^\mp\cap\mathcal{Z}^\pm = \emptyset$. 
	\end{enumerate}
	\defnlab{singalign}
\end{definition}

In dependence of the parameters $\alpha$, $\beta$, $f_2$, and $f_3$ in Equation~\eqref{normal}, we have the following result on the position of $q^-$ and $q^+$ relative to each other:
\begin{proposition}
Recall the classification in \defnref{singalign} above.
%	\textcolor{white}{empty text, merely to move following to new line}
	\begin{enumerate}	
		\item For $\alpha\beta<0$, the folded singularities $q^\mp$ of $\mathcal{M}_1$ are aligned if 
		$\frac{\alpha}{\beta}=\frac{2f_2^2}{9f_3}$, connected if 
		$\frac{\alpha}{\beta}>\frac{2f_2^2}{9f_3}$, and remote if
		$\frac{\alpha}{\beta} < \frac{2f_2^2}{9f_3}$.
		\item For $\alpha\beta\geq0$ with $ \beta\neq0 $, the folded singularities $q^\mp$ are connected.
		\item For $ \beta = 0 $ with $ \alpha\neq0 $, the folded singularities $q^\mp$ are remote.
	\end{enumerate}
	\proplab{relatives}
\end{proposition}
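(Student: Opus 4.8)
The plan is to push everything down to the $(x,z)$-plane, where $\mathcal{M}_2$ is the graph $\Gamma:=\{z=G(x)\}$ of the cubic in \eqref{Gx} and, since $\mathcal{L}^-$ and $\mathcal{L}^+$ are both parallel to the $z$-axis, the normal planes $\mathcal{P}^\mp$ become the horizontal lines $z=z^\mp$, where $z^-=G(0)=0$ and $z^+=G(x_q^+)$. Using $x_q^+$ from \eqref{foldsing} one computes
\begin{align*}
z^+ = G(x_q^+) = \frac{2f_2}{27 f_3^2}\left(2\beta f_2^2 - 9\alpha f_3\right),
\end{align*}
so in particular $\operatorname{sign} z^+=\operatorname{sign}\!\left(2\beta f_2^2-9\alpha f_3\right)=\operatorname{sign}(\beta)\cdot\operatorname{sign}\!\left(\tfrac{\alpha}{\beta}-\tfrac{2f_2^2}{9f_3}\right)$, because $-9f_3>0$. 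The single structural fact I would exploit is that $G$ is point-symmetric about its inflection $(x_c,G(x_c))$ with $x_c=-f_2/(3f_3)=\tfrac12 x_q^+$, the common midpoint of $\mathcal{L}^-$ and $\mathcal{L}^+$. The reflection $(x,z)\mapsto(2x_c-x,\,2G(x_c)-z)$ fixes $\Gamma$, interchanges $q^-\leftrightarrow q^+$ and the outer branches $\mathcal{Z}^-\leftrightarrow\mathcal{Z}^+$, and sends $\{z=0\}$ to $\{z=z^+\}$; hence $\mathcal{P}^-\cap\mathcal{Z}^+\neq\emptyset\iff\mathcal{P}^+\cap\mathcal{Z}^-\neq\emptyset$, so the paired conditions in \defnref{singalign} collapse to a single one.

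\textbf{Alignment.} We have $\mathcal{P}^-\equiv\mathcal{P}^+\iff z^+=z^-=0\iff 2\beta f_2^2-9\alpha f_3=0$, i.e.\ (for $\beta\neq0$) $\tfrac{\alpha}{\beta}=\tfrac{2f_2^2}{9f_3}$, which is the first assertion of case~1. Since $\tfrac{2f_2^2}{9f_3}<0$, this equality is impossible once $\tfrac{\alpha}{\beta}\ge0$, ruling out alignment in case~2; and for $\beta=0$, $\alpha\neq0$ one has $z^+=-2\alpha f_2/(3f_3)\neq0$, ruling it out in case~3.

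\textbf{Connected versus remote in case~1.} Here \propref{relfold} places both folds of $\mathcal{M}_2$ on $\mathcal{S}^r$, so $x_p^-,x_p^+\in(0,x_q^+)$ and $q^-\in\mathcal{Z}^-$, $q^+\in\mathcal{Z}^+$. The part of $\mathcal{Z}^+$ lying between its fold $p^+$ and $q^+$ sits on the repelling sheet $\mathcal{S}^r$ and cannot be reached by the global return, so the arc intended in \defnref{singalign} is $\mathcal{Z}^+\cap\mathcal{S}^{a^+}=\{x>x_q^+\}$. On it $G$ is monotone with boundary value $G(x_q^+)=z^+$ — increasing if $\beta<0$, decreasing if $\beta>0$ — so the level $z=0$ meets it precisely when $z^+<0$ (for $\beta<0$) or $z^+>0$ (for $\beta>0$); by the sign of $z^+$ above, both are equivalent to $\tfrac{\alpha}{\beta}>\tfrac{2f_2^2}{9f_3}$, which is therefore the connected regime, the opposite inequality being remote. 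One checks directly that $\tfrac{\alpha}{\beta}>\tfrac{2f_2^2}{9f_3}$ already forces $\beta^2f_2^2-3\alpha\beta f_3>0$, so by \propref{zfoldsprop} the two folds needed for $\mathcal{Z}^\pm$ to exist are present throughout the connected regime; on the remote side where $\beta^2f_2^2-3\alpha\beta f_3\le0$ there are no outer branches and remoteness holds trivially.

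\textbf{Remaining cases and the main obstacle.} Case~2 ($\alpha\beta\ge0$, $\beta\neq0$) follows at once, since $\tfrac{\alpha}{\beta}\ge0>\tfrac{2f_2^2}{9f_3}$ places it strictly inside the connected regime; here \propref{relfold} puts the folds on $\mathcal{S}^{a^-}$ and $\mathcal{S}^{a^+}$, so both $q^\mp$ lie on the middle branch, each $\mathcal{Z}^\pm$ lies entirely on $\mathcal{S}^{a^\pm}$, and $z=0$ is strictly between the two fold values of $\mathcal{M}_2$ and hence meets both outer branches. Case~3 ($\beta=0$, $\alpha\neq0$) is immediate: $G$ is affine, $\mathcal{M}_2$ has no folds, the outer branches are absent, and $z^+\neq0$ excludes alignment, so the singularities are remote. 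The main obstacle is the connected/remote reduction in case~1: one must recognise that the transition occurs exactly at alignment ($z^+=0$) and \emph{not} at the fold $p^+$ of $\mathcal{M}_2$, which is precisely what forces restricting the intersection to the arc of $\mathcal{Z}^+$ on $\mathcal{S}^{a^+}$ (the segment between $p^+$ and $q^+$, being on $\mathcal{S}^r$, must be excluded). Everything else is sign bookkeeping — tracking how $\operatorname{sign} z^+$ and the monotonicity direction of $G$ each reverse with $\operatorname{sign}\beta$ so that their product yields the single inequality $\tfrac{\alpha}{\beta}\gtrless\tfrac{2f_2^2}{9f_3}$ — together with disposing of the degenerate remote sub-case of case~1 in which $\mathcal{M}_2$ carries no folds at all.
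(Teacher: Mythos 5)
Your proof is correct, and its computational core --- evaluating $z^-=G(0)=0$ and $z^+=G(x_q^+)$ and reading the classification off the shape of the cubic $G$ in \eqref{Gx} --- is exactly what the paper invokes; the paper's own proof, however, is a one-line pointer to \eqref{foldsing} and ``the properties of $G$'' plus figure references, whereas your write-up supplies the actual argument, and two of your steps add substance the paper omits. The first is the point-symmetry of $G$ about its inflection point, with $2x_c=x_q^+$, which shows $\mathcal{P}^-\cap\mathcal{Z}^+\neq\emptyset \iff \mathcal{P}^+\cap\mathcal{Z}^-\neq\emptyset$ and so legitimises checking only one of the paired conditions in \defnref{singalign}; the paper never says why one suffices. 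The second, and more important, is your recognition that the intersection test must be restricted to the arcs of $\mathcal{Z}^\pm$ lying over the attracting sheets $\mathcal{S}^{a^\pm}$: this is not pedantry but a necessary repair. With $\mathcal{Z}^+$ read literally as the full outer branch of \eqref{zbranches}, the connected/remote transition would sit where the plane $\{z=z^-\}$ crosses the fold value $G(x_p^+)$ of $\mathcal{M}_2$ rather than at alignment: for instance, for $\beta<0$ and $\frac{\alpha}{\beta}$ slightly below $\frac{2f_2^2}{9f_3}$ one has $G(x_p^+)<0<z^+$, so $\{z=0\}$ still meets the outer branch at a point over $\mathcal{S}^r$, and the literal definition would declare the singularities connected while the proposition declares them remote. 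Your restricted reading is the one under which the stated thresholds are true, and it is also the dynamically meaningful one, since only an intersection lying on $\mathcal{S}^{a^\pm}$ can capture the returning flow --- a point the paper's terse proof never confronts.

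Two minor remarks. Your appeal to \propref{relfold} in case 2 is slightly imprecise at the boundary $\alpha=0$, where the folds of $\mathcal{M}_2$ lie on $\mathcal{L}^\mp$ rather than on $\mathcal{S}^{a^\mp}$; your monotonicity argument covers that case regardless, since the arc $\{x>x_q^+\}$ then coincides with $\mathcal{Z}^+$. Also, your direct evaluation $z^+=\frac{2f_2}{27f_3^2}\bigl(2\beta f_2^2-9\alpha f_3\bigr)$ silently corrects a typo in \eqref{foldsing}, where $z_q^+$ is printed with $f_3^3$ in place of $f_3^2$; had you trusted the printed formula, alignment would have come out as $\frac{\alpha}{\beta}=\frac{2f_2^2}{9f_3^2}>0$, which is incompatible with case 1 and with the proposition itself.
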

\begin{proof}
	The statements follow from Equation~\eqref{foldsing} and the properties of $ G(x) $ in \eqref{Gx}; see panels (c) and (d) of \figref{m2bif} for cases corresponding to the first statement, and panels (a), (b), (e), and (f) for cases corresponding to the second statement.
\end{proof}

In what is to come, we will restrict our attention to the case that is illustrated in panel (c) of \figref{m2bif}:
\begin{asu}
In the following, we assume that $\alpha>0$ and $\beta<0$ in Equation~\eqref{normal}. 
\asulab{albeta}
\end{asu}
\asuref{albeta} is made for three reasons. First, it is consistent with the Koper model, Equation~\eqref{koper1}, after transformation to the prototypical Equation~\eqref{normal}. (In particular, it follows that the scenarios illustrated in panels (b) and (e) of \figref{m2bif} cannot be realised in \eqref{koper1}.) Second, given $\beta\neq 0$, remote singularities can only be present when $\alpha\beta<0$. Third, given \asuref{albeta}, the outer branches $ \mc{Z}^\mp\cap\mathcal{S}^{a\mp}$ of $\mc{Z}$ are attracting, while the middle branch is repelling, which allows for the construction of closed singular periodic orbits (``cycles") which will serve as templates for the corresponding MMO trajectories, as will become apparent in the following subsection. In particular, since $\mc{Z}^0$ is entirely contained in $\mc{S}^r$, we will write 
$$\mathcal{Z}=\mathcal{Z}^{-}\cup\mathcal{Z}^{r}\cup\mathcal{Z}^{+}$$ in the following.

\begin{figure}[ht!]
	\centering
	\begin{subfigure}[b]{0.3\textwidth}
		\centering
		\includegraphics[scale = 0.28]{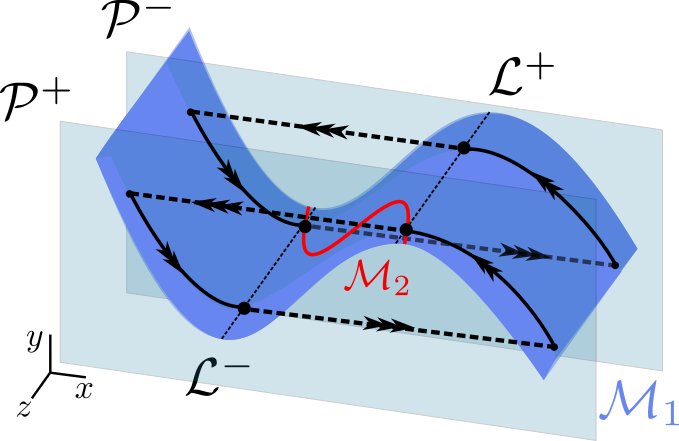}
		%		\caption{}
	\end{subfigure}
	~~
	\begin{subfigure}[b]{0.3\textwidth}
		\centering
		\includegraphics[scale = 0.28]{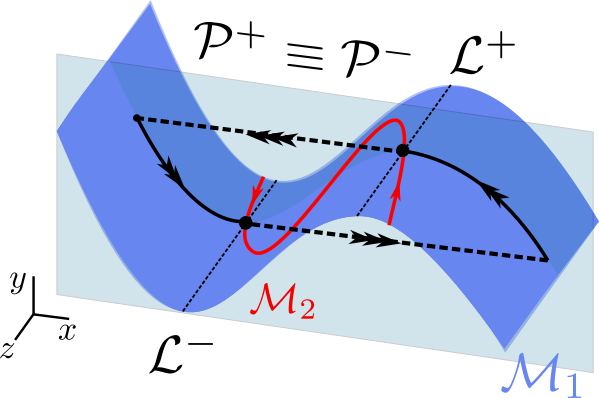}
		%		\caption{}
	\end{subfigure}
	~~
	\begin{subfigure}[b]{0.3\textwidth}
		\centering
		\includegraphics[scale = 0.28]{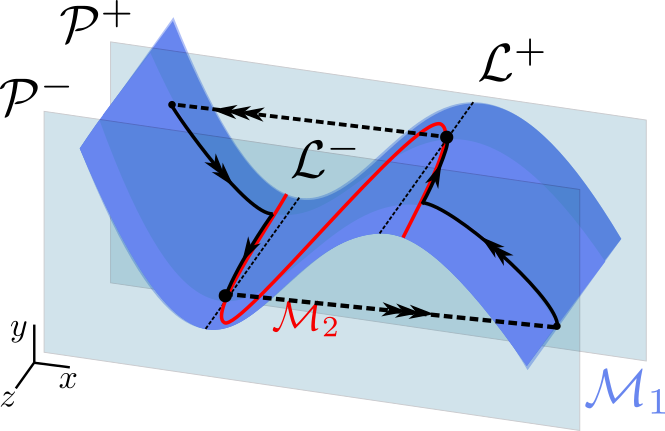}
		%		\caption{}
	\end{subfigure}
	\\
	\vskip10pt
	\begin{subfigure}[b]{0.3\textwidth}
		\centering
		\includegraphics[scale = 0.3]{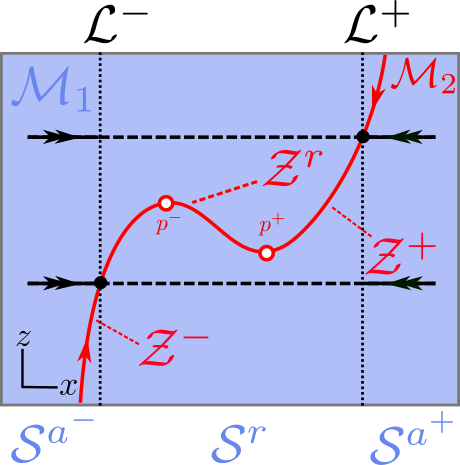}
		\caption{Remote singularities.}
	\end{subfigure}
	~
	\begin{subfigure}[b]{0.3\textwidth}
		\centering
		\includegraphics[scale = 0.3]{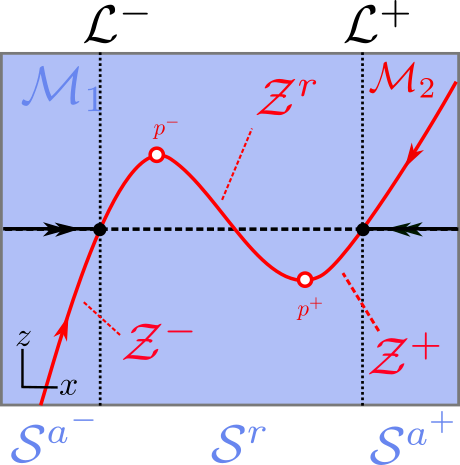}
		\caption{Aligned singularities.}
	\end{subfigure}
	~
	\begin{subfigure}[b]{0.3\textwidth}
		\centering
		\includegraphics[scale = 0.3]{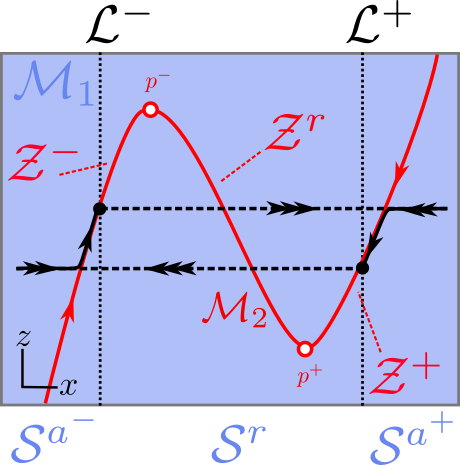}
		\caption{Connected singularities.}
	\end{subfigure}
	\caption{Relative geometry of the folded singularities $q^\mp$ of $\mathcal{M}_1$ according to \defnref{singalign} (top row); bifurcation of the resulting singular cycles, as described in \propref{double} (bottom row).}
	\figlab{singcycle}
\end{figure}

\subsection{Singular cycles}

%Thus far, we have merely described the geometric properties of $\mathcal{M}_{1}$ and $\mathcal{M}_2$. 
We now consider the reduced flow on $\mathcal{M}_2$. We impose the following assumption on the function $\phi(x,y,z)$ in the slow Equation~\eqref{normal-c}:
\begin{asu}
The function $\phi(x,y,z)$ in Equation~\eqref{normal-c} is such that $\phi(x_q^-,y_q^-,z_q^-)=0$, $\phi(x,F(x),G(x))>0$ for $x<x_q^-$, $\phi(x_q^+,y_q^+,z_q^+)\leq0$, and $\phi(x,F(x),G(x))<0$ for $ x>x_q^+$.
\asulab{redflow}
\end{asu}

%\begin{asu}
%	The function $\phi(x,y,z)$ in Equation~\eqref{normal-c} is such that the reduced flow on the portions $\mc{Z}^{\mp}$ of $\mathcal{M}_2$ is directed towards the fold points $p^\mp$ of $\mathcal{M}_2$; in particular, we require $\phi(x_p^\mp,F(x_p^\mp),G(x_p^\mp))=0$, $\mu+\phi(x,F(x),G(x))>0$ for $x<x_p^-$, and $\mu+\phi(x,F(x),G(x))<0$ for $ x>x_p^+$.
%	\asulab{redflow}
%\end{asu}

\asuref{redflow} ensures that the reduced flow on the portions $\mc{Z}^{\mp}$ of $\mathcal{M}_2$ is directed towards the folded singularities $q^\mp$ of $\mathcal{M}_1$. We emphasise that the properties of that flow therefore crucially depend on $ \mu $: in particular, we have that for $ \mu = 0 $, a true global equilibrium of Equation~\eqref{normal} coincides with $q^-$; see \secref{singpert1}.

\asuref{albeta} and \asuref{redflow} together imply the existence of singular cycles in Equation~\eqref{normal}, the properties of which are determined by the relative position of the folded singularities $q^\mp$ of $\mathcal{M}_1$, as classified in \propref{relatives}. (Clearly, the choice of $\phi(x,y,z)$ in \eqref{normal-c} does not affect that classification; correspondingly, we do not specify it here.) These cycles are defined as the concatenation of singular orbits for the corresponding limiting systems in \eqref{norm12lay}, \eqref{intermediate}, and \eqref{slow}, respectively.
\begin{proposition}
Assume that \asuref{albeta} and \asuref{redflow} hold. 
\begin{enumerate}
\item If the folded singularities $q^\mp$ of $\mathcal{M}_1$ are remote, then there exist a singular cycle evolving on $\mathcal{P}^-$, a singular cycle evolving on $\mathcal{P}^+$, and a family of singular cycles in between; each of the cycles in that family evolves on a plane parallel to $\mathcal{P}^{\mp}$ that lies between $\mathcal{P}^{-}$ and $\mathcal{P}^{+}$. These cycles are ``two-scale", in the sense that the singular dynamics on them alternates between the fast and the intermediate timescale (on $ \mathcal{M}_1 \backslash  \mathcal{M}_2 $). 
\item If $q^\mp$ are aligned, then there there exists exactly one singular cycle that evolves on the plane $\mathcal{P}:=\mathcal{P}^-\equiv \mathcal{P}^+$. This cycle is ``two-scale", in the sense that the singular dynamics on it alternates between the fast and the intermediate timescale (on $ \mathcal{M}_1 \backslash \mathcal{M}_2 $).  
\item If $q^\mp$ are connected, then there exists exactly one singular cycle that evolves on a subset of $\mathcal{P}^-\cup\mathcal{Z}^{-}\cup\mathcal{P}^+\cup\mathcal{Z}^{+}$. This cycle is ``three-scale", in the sense that the singular dynamics on it alternates between the fast, the intermediate (on $ \mathcal{M}_1 \backslash \mathcal{M}_2 $), and the slow timescale (on $ \mathcal{M}_2 $). 
\end{enumerate}
\proplab{double}
\end{proposition}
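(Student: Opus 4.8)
The proof is constructive: in each configuration I build the singular cycle(s) explicitly, as concatenations of orbit segments of the fast layer problem \eqref{norm12lay}, the intermediate flow \eqref{intermediate}, and the slow flow \eqref{slow}, and then check that the concatenation closes up and count the cycles that result (cf.\ the bottom row of \figref{singcycle}).

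The first step is to record the two fast connections. Since the layer problem \eqref{norm12lay} fixes $(y,z)$, the fast fibre through a point of $\mathcal{L}^-$ is the $x$-interval joining it to the nonzero root of $f(\cdot,0,z)=0$, which lies on $\mathcal{S}^{a^+}$; symmetrically, the fast fibre through $\mathcal{L}^+$ lands on $\mathcal{S}^{a^-}$, and both jumps preserve $z$. The second step is to analyse the intermediate flow \eqref{classy-des} inside a plane $\lb z=z_0\rb$. Under \asuref{albeta} the fold points $p^\mp$ of $\mathcal{M}_2$ lie on $\mathcal{S}^r$ (\propref{relfold}), so $G$ in \eqref{Gx} is strictly monotone on each attracting sheet; the only equilibria of the intermediate flow on $\mathcal{S}^{a^-}$ (respectively $\mathcal{S}^{a^+}$) are therefore confined to $\mathcal{Z}^{a^-}\cap\lb z<z^-\rb$ (respectively $\mathcal{Z}^{a^+}\cap\lb z>z^+\rb$), where $z^-=0$ and $z^+=z_q^+$. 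Consequently, on $\lb z=z_0\rb$ the intermediate flow on $\mathcal{S}^{a^-}$ reaches the fold $\mathcal{L}^-$ exactly when $z_0\ge z^-$, and that on $\mathcal{S}^{a^+}$ reaches $\mathcal{L}^+$ exactly when $z_0\le z^+$; otherwise it is captured by the attracting branches $\mathcal{Z}^{a^\mp}$ (\propref{reldyn}).

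The case split is then governed entirely by the ordering of $z^-=0$ and $z^+=z_q^+$; comparing these via \eqref{foldsing} -- equivalently, via \propref{relatives} -- shows that the remote, aligned, and connected configurations of \defnref{singalign} correspond precisely to $z^-<z^+$, $z^-=z^+$, and $z^+<z^-$, respectively. In the remote case, for each $z_0\in[z^-,z^+]$ neither attracting sheet carries a blocking equilibrium, so the chain $\mathcal{L}^-\to\mathcal{S}^{a^+}\to\mathcal{L}^+\to\mathcal{S}^{a^-}\to\mathcal{L}^-$ of fast and intermediate fibres closes inside $\lb z=z_0\rb$; this produces the one-parameter family of two-scale cycles, whose extremal members lie on $\mathcal{P}^\mp$ and pass through $q^\mp$. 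In the aligned case the interval $[z^-,z^+]$ degenerates to the single plane $\mathcal{P}=\mathcal{P}^-\equiv\mathcal{P}^+$, on which the same chain yields exactly one two-scale cycle, now passing through both $q^-$ and $q^+$.

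The connected case ($z^+<z^-$) is the one demanding care, and I expect it to be the main obstacle. Here, for every admissible $z_0$ at least one attracting sheet captures the intermediate flow onto $\mathcal{Z}^{a^\mp}$, so no two-scale cycle exists and the slow flow \eqref{slow} must be engaged. Using \asuref{redflow} together with the monotonicity of $G$, I would first check that the slow segment entering $\mathcal{Z}^{a^-}$ lies in $\lb x<0\rb$ and that entering $\mathcal{Z}^{a^+}$ in $\lb x>x_q^+\rb$, so that $z'=\mu+\phi$ inherits the sign of $\phi$ there (for $\mu$ in the relevant range) and drives the trajectory monotonically up $\mathcal{Z}^{a^-}$ towards $q^-$ and down $\mathcal{Z}^{a^+}$ towards $q^+$. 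Concatenating a fast jump off $\mathcal{P}^-$, an intermediate fibre onto $\mathcal{Z}^{a^+}$, a slow segment down to $q^+$ on $\mathcal{P}^+$, and the symmetric return through $\mathcal{Z}^{a^-}$ then closes a three-scale cycle in $\mathcal{P}^-\cup\mathcal{P}^+\cup\mathcal{Z}^{a^\mp}$. The delicate point is precisely this closure: one must verify that the sign of $\mu+\phi$ supplied by \asuref{redflow} makes each slow segment terminate \emph{at} the folded singularity $q^\mp$, where the fold line is met and the next fast jump is triggered, so that the orbit returns to its starting point. Uniqueness is then immediate, since every segment is forced by the landing-and-capture analysis and the slow flow on the one-dimensional branches $\mathcal{Z}^{a^\mp}$ is deterministic; the remote and aligned cases are comparatively routine.
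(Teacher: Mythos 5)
Your proof is correct and is essentially the paper's own (implicit) argument: \propref{double} is stated there \emph{without proof} --- the cycles are simply defined as concatenations of orbits of the limiting systems \eqref{norm12lay}, \eqref{intermediate}, and \eqref{slow}, with \figref{singcycle} serving as the justification --- and your landing-and-capture analysis (fast fibres preserve $z$ and land on the opposite attracting sheet; monotonicity of $G$ on $\mathcal{S}^{a^\mp}$ confines intermediate-flow equilibria to $\mathcal{Z}^{a^-}\cap\lb z<z^-\rb$ and $\mathcal{Z}^{a^+}\cap\lb z>z^+\rb$; the trichotomy reduces to the ordering of $z^-$ and $z^+$) supplies exactly the details the figures depict, with the caveat that the ordering criterion should be credited to \propref{relatives} rather than to \eqref{foldsing}, whose printed formula for $z_q^+$ contains a typo ($f_3^3$ where $f_3^2$ is meant) and would otherwise give $z_q^+>0$ identically. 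The one step that remains informal, as you yourself flag, is the closure of the slow segments in the connected case: it requires restricting $\mu$ so that $\mu+\phi$ is positive along the ascent on $\mathcal{Z}^{a^-}$ and negative along the descent on $\mathcal{Z}^{a^+}$ (e.g.\ $\mu>0$ sufficiently small, below $\mu_{SH}^+$ and below any zero of $\mu+\phi$ on the descent segment), a dependence that the proposition's statement itself suppresses, so your treatment is no weaker than the paper's.
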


\defnref{singalign} and \propref{double} are summarised in \figref{singcycle}, where we recall that the fast, intermediate, and slow dynamics are given by the limiting systems in \eqref{norm12lay}, \eqref{intermediate}, and \eqref{slow}, respectively. 

\section{Singular perturbation}\seclab{singpert1}

In this section, we discuss the correspondence between the families of singular cycles constructed in \propref{double} and the MMO trajectories which perturb from those cycles for $\varepsilon$ and $\delta$ positive, but sufficiently small, in Equation~\eqref{normal}. In the process, we give a qualitative characterisation of the resulting mixed-mode dynamics in dependence of system parameters. 

By standard GSPT \cite{fenichel1979geometric,krupa2001extending,cardin2017fenichel}, we obtain slow manifolds $ \mathcal{S}_{\varepsilon\delta}^{a,r} $ as perturbations of $ \mathcal{S}^{a,r} $ away from the fold lines $\mathcal{L}^\mp$, for $ \varepsilon,\delta>0 $ sufficiently small. The dynamics on these locally invariant manifolds is itself slow-fast with respect to the singular perturbation parameter $\delta$, which implies the existence of super-slow locally invariant manifolds $\mathcal{Z}_{\varepsilon\delta}^{\mp,r}$ as perturbations of $\mathcal{Z}^{\mp,r}$ away from the fold points $p^\mp$. In particular, the manifolds $ \mathcal{Z}_{\varepsilon\delta}^{\mp} $ locally approximate the weak canards of the folded singularities $ q^\mp $, respectively \cite{letson2017analysis}; recall \eqref{classy-des}. 

First, we remark on the transition between steady-state behaviour and oscillatory dynamics in Equation~\eqref{normal} in dependence of $\mu$. For $ \varepsilon=0=\delta $, true equilibria of \eqref{normal} cross the folded singularities $ q^\mp $ at 
\begin{align}
\mu_{q}^-=-\phi(x_{q}^-, y_{q}^-, z_{q}^-)\quad\text{and}\quad
\mu_{q}^+=-\phi(x_{q}^+, y_{q}^+, z_{q}^+), \eqlab{muqmp}
\end{align}
where we recall that $(x_{q}^-, y_{q}^-, z_{q}^-)=(0,0,0)$ and, hence, that $ 0=\mu_{q}^-<\mu_{q}^+$, by \asuref{redflow}.

By \cite{guckenheimer2008singular,krupa2008mixed,desroches2012mixed}, Hopf bifurcations occur $\mc{O}(\varepsilon,\delta)$-close to $\mu_q^{\mp}$ for $ \varepsilon,\delta >0$ sufficiently small in Equation~\eqref{normal}; these generate small-amplitude limit cycles in the vicinity of the folded singularities $q^\mp$ which correspond to MMOs with signature $0^k$. The mixed-mode dynamics that may arise due to those Hopf bifurcations does not seem to be well-understood in the three-scale regime considered here \cite{desroches2012mixed}. However, MMO trajectories that contain both SAO and LAO segments have been shown to emerge either in a ``slow passage through a canard explosion" that occurs $\mc{O}(\varepsilon,\delta)$-away from the corresponding Hopf points \cite{krupa2008mixed,letson2017analysis} or via a delayed Hopf-type phenomenon on $\mathcal{Z}_{\varepsilon\delta}^\mp$ \cite{de2016sector,letson2017analysis}; the underlying mechanisms are briefly addressed in Appendix~\ref{mechs} in the context of Equation~\eqref{normal}. Crucially, for fixed $\mu\in (\mu_{q}^-,\mu_{q}^+)$, one can find $\varepsilon,\delta>0$ sufficiently small such that \eqref{normal} features global mixed-mode dynamics, which we will study in dependence of the parameters $f_2$, $f_3$, $\alpha$, and $\beta$ therein. For future reference, we will write
\begin{align}
    M:= (\mu_{q}^-,\mu_{q}^+)\quad\text{and}\quad\overline{M}:= \big[\mu_{q}^-,\mu_{q}^+\big].
\end{align}

For $\varepsilon$ and $\delta$ positive and sufficiently small in \eqref{normal}, trajectories can hence be composed from components that evolve close to fast, intermediate, and slow segments of the corresponding singular cycles introduced in \secref{singgeom}. In a first approximation, where the fast and intermediate segments are approximated as straight lines -- the latter in the 
$ (x,z) $-plane -- trajectories are attracted to the vicinities of both folded singularities $q^\mp$ if these are aligned or connected, whereas they tend to either $q^-$ or $q^+$ if the singularities are remote, as can be seen from \figref{singcycle}. (In~\secref{singgeom}, we showed that the funnels of the folded nodes $q^\mp$ for Equation~\eqref{normal} expand with decreasing $ \delta $; in the three-timescale limit as $ \delta $ approaches zero, these funnels can be viewed as having been ``stretched" in one direction.) From the well-established theory of two-timescale singular perturbations, it is known that SAOs arise in the passage past folded singularities under the perturbed flow, see \cite{desroches2012mixed, de2016sector,wechselberger2005existence}; the underlying local two-timescale mechanisms are well understood. Again, we discuss their three-timescale analogues in \secref{local} below. In particular, we conclude that SAOs are observed ``above" or ``below", in the language of \figref{fareys}, depending on which folded singularity of Equation~\eqref{normal} trajectories are attracted to; double epochs of SAOs can occur when trajectories are attracted to both folded singularities $q^\mp$. The mixed-mode dynamics of Equation~\eqref{normal} can hence naturally be classified according to whether the folded singularities $q^\mp$ are aligned, connected, or remote; cf.~\figref{muplane}. 

\begin{figure}[ht!]
	\centering
	\begin{subfigure}[b]{0.45\textwidth}
		\centering
		\includegraphics[scale = 0.27]{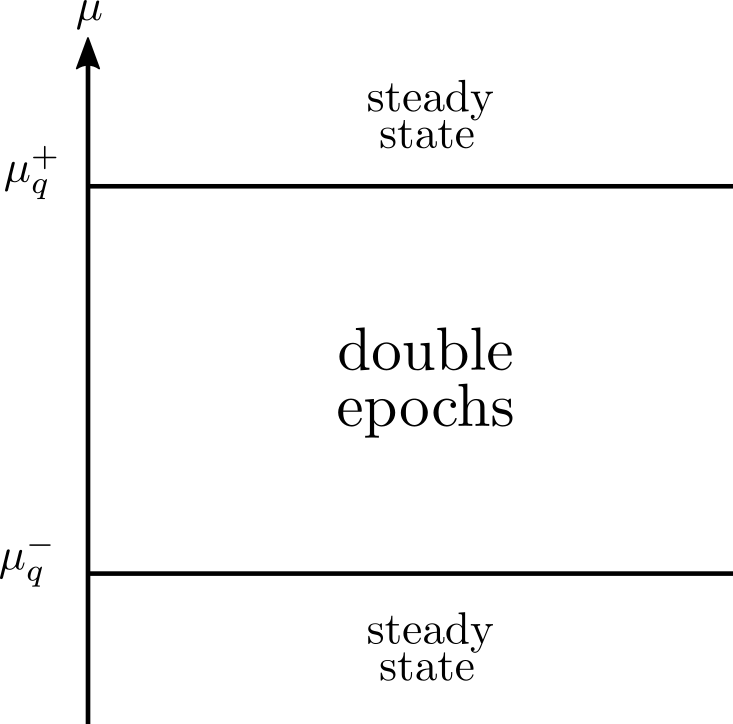}
		\caption{Aligned or connected singularities $\big(\frac{\alpha}{\beta}\geq\frac{2f_2^2}{9f_3}\big)$.}
	\end{subfigure}
	~
	\begin{subfigure}[b]{0.45\textwidth}
		\centering
		\includegraphics[scale = 0.27]{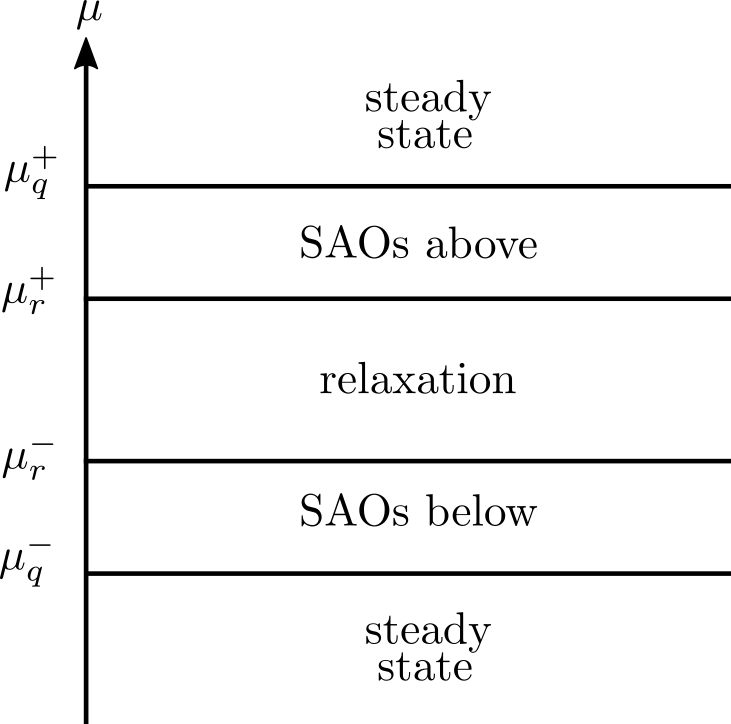}
		\caption{Remote singularities $\big(\frac{\alpha}{\beta}<\frac{2f_2^2}{9f_3}\big)$.}
	\end{subfigure}
	\caption{Dynamics of Equation~\eqref{normal} with $ f_2$, $f_3$, $\alpha$, and $\beta$ fixed and $\varepsilon,\delta>0$ sufficiently small: the $\mu$-values $\mu_{q}^\mp$ distinguish between oscillatory dynamics and steady-state behaviour. (a) When $\frac{\alpha}{\beta}\geq\frac{2f_2^2}{9f_3}$, the singular geometry of \eqref{normal} is such that double epochs of perturbed slow dynamics occur for $ \varepsilon, \delta >0$ sufficiently small. (b) When $\frac{\alpha}{\beta}<\frac{2f_2^2}{9f_3}$, there exist two values $ \mu_r^\mp $ which separate MMO trajectories with single epochs of SAOs from relaxation oscillation, in dependence of the properties of $ \phi(x,y,z) $ in \eqref{normal-c}.}
	\figlab{muplane}
\end{figure}

% In a first step, we note that, under \asuref{albeta} and \asuref{redflow}, Equation~\eqref{normal} with $ \varepsilon,\delta>0 $ sufficiently small undergoes Hopf bifurcations for two values of the parameter $ \mu $ in \eqref{normal-c}, which we denote by $ \mu_{SH}^\mp $; these bifurcations are referred to as ``singular Hopf bifurcations'' in the literature \cite{guckenheimer2008singular,koper1995bifurcations,desroches2012mixed} and separate regions of oscillatory dynamics from steady-state behaviour in Equation~\eqref{normal}. 

%\begin{proposition}
%	Assume that \asuref{albeta} and \asuref{redflow} hold, and let $ 0<\varepsilon,\delta\ll 1 $. Then, Equation~\eqref{normal} undergoes Hopf bifurcations for $ \mu_{SH}^\mp = -\phi(x_{DH}^\mp, y_{DH}^\mp, z_{DH}^\mp) $, where $x_{DH}^\mp$, $y_{DH}^\mp$, and $z_{DH}^\mp$ are such that $ y_{DH}^\mp = F(x_{DH}^\mp)$ and $ z_{DH}^\mp = G(x_{DH}^\mp) $, with $F$ and $G$ defined as in \eqref{Fx} and \eqref{Gx}, respectively.
%\end{proposition}
%\begin{proof}
%	The statement follows from a straightforward calculation.
%\end{proof}

\subsection{Aligned or connected singularities}
When the folded singularities of $\mathcal{M}_1$ are aligned or connected, ``double epochs" of slow dynamics are observed in \eqref{normal} for $ \varepsilon,\delta>0 $ sufficiently small and $ \mu \in (\mu_{q}^-,\mu_{q}^+) $; see \figref{muplane} and \figref{conp}. 

\begin{theorem}
	Assume that \asuref{albeta} and \asuref{redflow} hold, that the folded singularities of Equation~\eqref{normal} are aligned or connected in the sense of \defnref{singalign}, i.e., that $\frac{\alpha}{\beta}\geq\frac{2f_2^2}{9f_3}$ in \eqref{normal}, and fix $\mu\in M$. Then, there exist $ \varepsilon_0,\delta_0 >0$ sufficiently small  such that \eqref{normal} features MMOs with double epochs of perturbed slow dynamics for all $ \lp \varepsilon,\delta\rp \in (0,\varepsilon_0)\times(0,\delta_0)$.
	\thmlab{conne}
\end{theorem}
\begin{proof}
We need to show that, under the stated assumptions, Equation~\eqref{normal} admits oscillatory trajectories that consist of repeat sequences of fast, intermediate, and slow segments near $\mc{S}^{a^-}$, followed by fast, intermediate, and slow segments near $\mc{S}^{a^+}$; see \figref{conp}. To that end, we define the sections
\begin{align*}
\Delta^+ &= \lb \lp x,y,z\rp \in \mb{R}^3\ \bigg\lvert \ x = \frac{x_{q}^++x_{q}^-}{2}, \ y > \frac{y_{q}^++y_{q}^-}{2},\ \text{and}\ z > \frac{z_{q}^++z_{q}^-}{2}\rb, \\
\Delta^- &= \lb \lp x,y,z\rp \in \mb{R}^3\ \bigg\lvert \ x = \frac{x_{q}^++x_{q}^-}{2}, \ y < \frac{y_{q}^++y_{q}^-}{2},\ \text{and}\ z < \frac{z_{q}^++z_{q}^-}{2}\rb, \\
\Sigma^- &= \lb \lp x,y,z\rp \in \mb{R}^3\ \lvert \ x<x_{q}^-\ \text{and}\ y = y_{q}^-+\rho_1, \ \text{with}\ \rho_1>0 \textnormal{ small}\rb,\quad\text{and} \\
\Sigma^+ &= \lb \lp x,y,z\rp \in \mb{R}^3\ \lvert \ x>x_{q}^-\ \text{and}\ y = y_{q}^+- \rho_2, \ \text{with}\ \rho_2>0 \textnormal{ small}\rb, 
\end{align*}
as well as the corresponding transition maps
\begin{align*}
\pi_{\rm out}^- : \Sigma^- \to \Delta^-, \quad
\pi_{\rm in}^+ : \Delta^- \to \Sigma^+,\quad 
\pi_{\rm out}^+ : \Sigma^+ \to \Delta^-,\quad\text{and}\quad 
\pi_{\rm in}^- : \Delta^+ \to \Sigma^-
\end{align*} 
that are induced by the flow of \eqref{normal} for $\mu\in M$ and $\varepsilon,\delta>0$ sufficiently small; see \figref{conp}. We hence need to prove that the return map $ \pi = \pi_{\rm in}^-\circ \pi_{\rm out}^+ \circ \pi_{\rm in}^+ \circ \pi_{\rm out}^-: \Sigma^-\to \Sigma^-$ is well-defined. 

We first consider the case where the folded singularities $q^-$ and $q^+$ are connected.
Then, the maps $ \pi^\mp_{\rm in} $ are well-defined by standard GSPT; in particular, the constants $\rho_1$ and $\rho_2$ are sufficiently small such that trajectories with initial conditions on $ \Delta^+\lvert_{\{z>z_{q}^+\}} $, respectively $ \Delta^-\lvert_{\{z<z_{q}^-\}} $, are attracted exponentially close to $ \mc{Z}^{-}_{\varepsilon\delta} $, respectively $ \mc{Z}^{+}_{\varepsilon\delta} $, before crossing $ \Sigma^- $, respectively $ \Sigma^+ $. The well-definedness of $ \pi_{\rm out}^\mp $ follows from \cite{letson2017analysis}, where it was shown that trajectories which approach the vicinities of $ \mc{L}^+ $ and $ \mc{L}^- $ exponentially close to $ \mc{Z}^{+}_{\varepsilon\delta} $ and $ \mc{Z}^{-}_{\varepsilon\delta} $, respectively, diverge exponentially from the latter at most at a \textit{buffer point} that is bounded between $ q^+ $ and $ p^+ $, respectively between $ q^- $ and $ p^- $.
(According to \cite{hayes2016geometric}, these buffer points lie $ o\lp1\rp $-close to $ \mc{L}^\mp $, respectively.)

We may hence conclude that the map $\pi:\ \Sigma^-\to\Sigma^-$ is well-defined for all $\varepsilon,\delta>0$ sufficiently small. It follows that orbits of \eqref{normal} are attracted to either $ \mc{Z}^{-}_{\varepsilon\delta} $ or $ \mc{Z}^{+}_{\varepsilon\delta} $ irrespective of initial condition, possibly after a jump; then, they follow the slow flow until they have to jump and are attracted to either $ \mc{Z}^{-}_{\varepsilon\delta} $ or $ \mc{Z}^{+}_{\varepsilon\delta} $, which shows the existence of an attractor with double epochs of perturbed slow dynamics, as claimed. 

Finally, the above argument also holds for the case of aligned singularities: while the folded singularities $q^\mp$ have the same $z$-coordinates in the singular limit of $ \varepsilon=0=\delta $, the corresponding buffer points in the perturbed Equation~\eqref{normal}, with $ \varepsilon,\delta>0 $ sufficiently small, are still bounded between $ q^\mp $ and $ p^\mp $, respectively. Hence, orbits jump at points near $ \mc{L}^\pm $ and then cross $ \Delta^\pm $, and are therefore attracted to both $ \mc{Z}^{-}_{\varepsilon\delta} $ and $ \mc{Z}^{+}_{\varepsilon\delta} $, as in the connected case.

\begin{figure}[ht!]
		\begin{subfigure}[b]{0.45\textwidth}
		\centering
		\includegraphics[scale = 0.40]{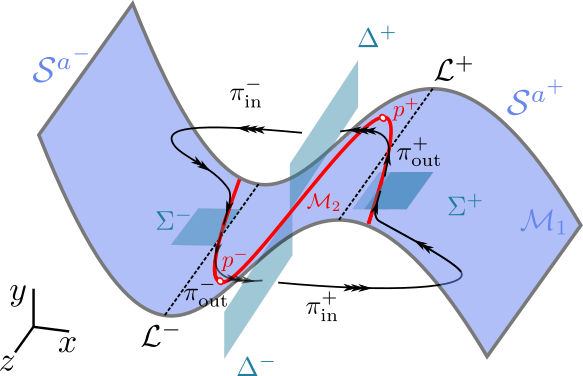}
		\caption{}
	\end{subfigure}
	~
	\begin{subfigure}[b]{0.45\textwidth}
		\centering
		\includegraphics[scale = 0.45]{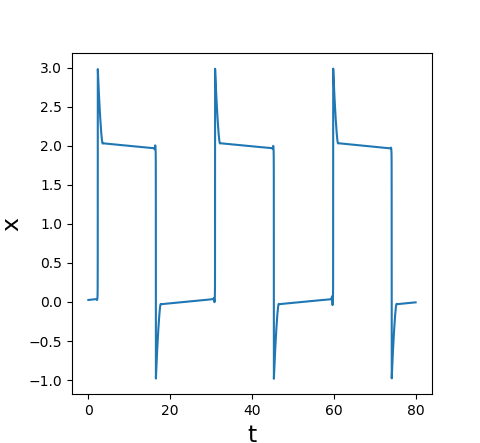}
		\caption{}
	\end{subfigure}
	\caption{Schematic illustration of the emergence of MMO trajectories with double epochs of perturbed slow dynamics in \eqref{normal}, for the case of aligned or connected singularities: (a) singular geometry and transition maps; (b) corresponding time series for the Koper model, as defined in \secref{koper}.}
	\figlab{conp}
\end{figure}

\end{proof}

%\begin{figure}[ht!]
%	\centering
%	\begin{subfigure}[b]{0.45\textwidth}
%		\centering
%		\includegraphics[scale = 0.40]{pics/prototypical/globretrem1_f.png}
%		\caption{Remote singularities.}
%	\end{subfigure}
%	~
%	\begin{subfigure}[b]{0.45\textwidth}
%		\centering
%		\includegraphics[scale = 0.45]{pics/prototypical/intro/below}
%		\caption{SAOs ``below" ($k = -4.5$, $\lambda = -2.0$).}
%	\end{subfigure}\\
%	\begin{subfigure}[b]{0.45\textwidth}
%		\centering
%		\includegraphics[scale = 0.40]{pics/prototypical/globretal1_f.png}
%		\caption{Aligned or connected singularities.}
%	\end{subfigure}
%	~
%	\begin{subfigure}[b]{0.45\textwidth}
%		\centering
%		\includegraphics[scale = 0.45]{pics/prototypical/intro/double}
%		\caption{Double epochs of SAOs ($k = -4.0$, $\lambda = 0.0$).}
%	\end{subfigure}
%	\caption{Schematic illustration of the emergence of MMO trajectories in \eqref{normal}.}
%	\figlab{globretmaps}
%\end{figure}

We remark that \thmref{conne} guarantees the existence of an \textit{attractor} for Equation~\eqref{normal} with double epochs of perturbed slow dynamics; however, a more specific characterisation, such as of its periodicity or chaoticity, is dependent on the properties of the function $ \phi(x,y,z) $ in \eqref{normal-c} and hence requires a case-by-case study. Moreover, we note that the ``double epoch" regime can be further divided into subregimes where SAOs occur ``above" and ``below''; SAOs are seen ``above" with SAO-less slow dynamics below, or vice versa; or ``three-timescale" relaxation oscillation is found, with the flow alternating between fast, intermediate, and slow SAO-less dynamics. Again, a precise characterisation requires careful consideration of the given function $ \phi(x,y,z) $ in \eqref{normal-c}. (Thus, for instance, we typically observe double-epoch MMO trajectories with SAOs only before relaxation in the Koper model from chemical kinetics studied in \secref{koper}; cf.~\figref{fareys} and \figref{relax}.) 

%Moreover, in the proof of  \thmref{conne} we used the fact that the \textit{buffer points}, i.e., the point at most where trajectories diverge from $ \mc{Z}^{\mp}_{\varepsilon\delta} $ exponentially after crossing $ \mc{L}^\mp $ regardless of their entry points, are bounded between  $ q^\mp $ and $ p^\mp $, respectively.
%
%It might not be immediately apparent why aligned singularities yield mixed-mode dynamics with double epochs of SAOs; however, we recall that, for $\varepsilon$ and $\delta$ sufficiently small, trajectories jump under the fast flow of \eqref{normal} \textit{after} crossing a bifurcation point $ p_{DH}^\mp $, recall \eqref{inout}, which implies that they will likewise be attracted to $ \mathcal{Z}_{\varepsilon\delta}^{\mp} $ on the opposite sheet $ \mathcal{S}_{\varepsilon\delta}^{a^\mp} $.

\subsection{Remote singularities}
In the case where the folded singularities of $\mathcal{M}_1$ in \eqref{normal} are remote, recall \defnref{singalign} and \figref{singcycle}, the perturbed flow of Equation~\eqref{normal} with $ \varepsilon,\delta>0 $ sufficiently small can exhibit MMOs with single epochs of SAOs, or ``two-timescale" relaxation oscillation where the flow alternates between the fast and the intermediate dynamics for $\mu\in M$; see again \figref{muplane} and \eqref{muqmp}. 
%As in \cite{krupa2008mixed}, we apply the rescaling
%\begin{align*}
%x = \sqrt{\varepsilon}\bar{x},\quad y = \varepsilon \bar{y},\quad z = \sqrt{\varepsilon}\bar{z},\quad\text{and}\quad t = \frac{\bar{t}}{\sqrt{\varepsilon}}
%\end{align*}
%to obtain the $ \varepsilon $-independent system
%\begin{subequations}\eqlab{edependent}
%\begin{align}
%{x}' &= -y + f_2x^2+\sqrt{\varepsilon}f_3x^3, \\
%{y}' &=  \alpha x+\beta\sqrt{\varepsilon} y{-z}, \\
%{z}' &=  \delta \lp \mu +\phi\lp \sqrt{\varepsilon}x, {\varepsilon}y,\sqrt{\varepsilon}z\rp\rp.
%\end{align}
%\end{subequations}
%If the displacement in the $ x $-direction of a trajectory with initial condition $ (x_0, y_0,z_0) $ on $ \mathcal{S}^{a^\mp} $ is $ s $, then the displacement in the $ z $-direction is given by $ \delta\mathcal{G}\lp x_0,x_0+s;z_0;\mu\rp $ in a first approximation \cite{krupa2008mixed}; recall \eqref{GiF}. 

First, to show the existence of relaxation oscillation in a $ \mu $-subregime of $(\mu_{q}^-,\mu_{q}^+)$, we combine the approaches of \cite{krupa2008mixed} and \cite{szmolyan2004relaxation}. To leading order in $ \varepsilon $ and $ \delta $, the $\mu$-values which separate the corresponding parameter regimes can be determined by requiring that the intermediate flow on $ \mathcal{S}^{a^-}$ is ``balanced'' by that on $ \mathcal{S}^{a^+} $. To that end, we consider the singular limit of $ \varepsilon=0 $ with $ \delta>0 $ sufficiently small in \eqref{normal}; in other words, we restrict to the flow on $ \mc{S}^{a^\mp} $, neglecting the $ z $-displacement due to the fast flow (in $\varepsilon$) away from $ \mc{S}^{a^\mp} $, see \cite{krupa2008mixed} for details. We begin by defining 
\begin{align}
x_0 := -\frac{f_2}{f_3}, \quad x_{\rm max} := -\frac{2f_2}{3f_3}, \quad\text{and}\quad x_{\rm max}^\ast:=\frac{f_2}{3f_3},  \eqlab{exmaxes}
\end{align}
where $ x_0 $ is the $ x $-coordinate of $ P(\mc{L}^-) $,  $ x_{\rm max}^\ast $ is the $ x $-coordinate of $ P(\mc{L}^+) $, and $ x_{\rm max} $ is the $ x $-coordinate of $ \mc{L}^+ $, cf. \figref{ro} below; here, we recall that $ P(\mc{L}^\mp)\subset\mc{S}^{a^\mp} $ denotes the projection of $ \mc{L}^\mp $ onto $ \mc{S}^{a^\mp} $ along the fast fibres of \eqref{norm12lay-a}. 

We then define
\begin{subequations}\eqlab{Res}
\begin{gather}
\mathcal{G}^-_0\lp z, \mu\rp := \mathcal{G}\lp x_\tn{max}^\ast,0;z;\mu\rp, \qquad
\mathcal{G}^+_0\lp z, \mu\rp := \mathcal{G}\lp x_0,x_\tn{max};z;\mu\rp, \eqlab{gis}\\
\text{and}\quad \mc{R}(z,\mu) := \mc{G}_0^-(z,\mu)+\mc{G}_0^+(z,\mu), \eqlab{Ris}
\end{gather}
\end{subequations}
where we recall that  
\begin{align*}
\mathcal{G}\lp x_0,x_1;z_0;\mu\rp = \int_{x_0}^{x_1} \frac{ F'(\sigma)\lp \mu+\phi \lp \sigma,F(\sigma),z_0\rp\rp}{\alpha\sigma+\beta F(\sigma)-z_0} \tn{d}\sigma
\end{align*}
is obtained by eliminating time in the reduced flow on $ \mc{S}^{a^\mp} $ under \eqref{classy-des} and integrating; cf.~\lemmaref{strong}. Finally, for future reference, we also write  
\begin{align}
I:= (z_{q}^-,z_{q}^+)\quad\text{and}\quad\bar I:= \big[z_{q}^-,z_{q}^+\big].
\end{align}
	\begin{figure}[ht!]
	\centering
	\begin{subfigure}[b]{0.45\textwidth}
		\centering
		\includegraphics[scale = 0.40]{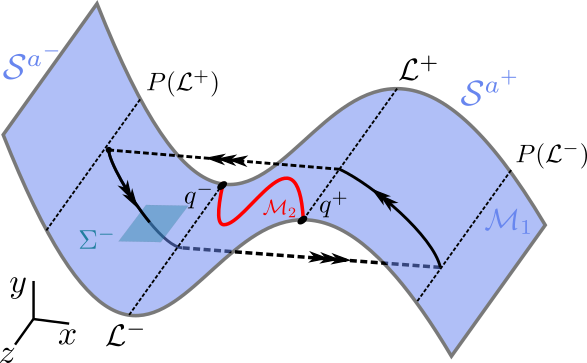}
		\caption{}
	\end{subfigure}
	~
	\begin{subfigure}[b]{0.45\textwidth}
		\centering
		\includegraphics[scale = 0.45]{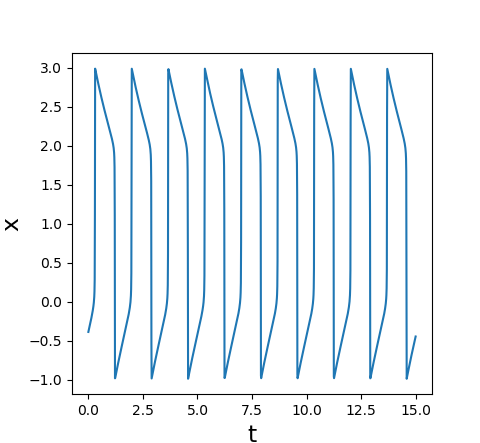}
		\caption{}
	\end{subfigure}
	\caption{Schematic illustration of the emergence of relaxation oscillation in \eqref{normal}, for the case of remote singularities: (a) singular geometry; (b) corresponding time series for the Koper model studied in \secref{koper}.}
	\figlab{ro}
\end{figure}
To leading order in $ \delta $ and for appropriately restricted $ \mu $-values, as specified below, the singular (in $\varepsilon$) trajectory through a point $ (0,0,z) \in \mc{L}^-\lvert_{I}$ returns to a point $ (0,0,\hat{z}) $ on $ \mc{L}^-\lvert_{I} $ , where
\begin{align}
\hat{z} = z+\delta\mc{R}(z,\mu)+\mc{O}(\delta^2). \eqlab{zret}
\end{align}
In the following, we will show that this map is well defined; see \thmref{relax} below.

We will say that the flow on $ \mathcal{S}^{a^-}$ is balanced by that on $ \mathcal{S}^{a^+} $ at a point with $ z=z^\ast $ for $ \delta>0 $ sufficiently small if $ \hat{z} = z^\ast $, i.e., if $ \mc{R}(z^\ast,\mu)=0 $. 
%To find the $ \mu $-values for which the reduced flow on $ \mathcal{S}^{a^-}$ is balanced by that on $ \mathcal{S}^{a^+} $ at either $ q^-$ or $ q^+ $, for $ \delta>0 $, 
We will require the following technical result.

\begin{lemma}
	If the folded singularities $q^\mp$ of $\mathcal{M}_1$ are remote, i.e., if $ \frac{\alpha}{\beta} <\frac{2f_2^2}{9f_3} $, then
	\begin{align}
	\int_{x_{0}}^{x_{\rm max}} \frac{ F'(\sigma)}{\alpha\sigma+\beta F(\sigma)-z_{q}^\mp} \tn{d}\sigma>0 \quad\text{and}\quad 
	\int_{x_{\rm max}^\ast}^{0} \frac{ F'(\sigma)}{\alpha\sigma+\beta F(\sigma)-z_{q}^\mp} \tn{d}\sigma>0,
	\eqlab{intord}
	\end{align}
	as well as
	\begin{align}
	\mc{G}_0^-(z_{q}^-,\mu)>0 \quad\text{and}\quad 	\mc{G}_0^+(z_{q}^+,\mu)<0,
	\eqlab{inGs}
	\end{align}
	for $\mu\in\overline{M}$.
	\lemmalab{denomsign}
\end{lemma}
\begin{proof}
	The assertions in \eqref{intord} and \eqref{inGs} follow immediately from the below:
	\begin{enumerate}
		\item $ F'(x) <0$ for $ x\in (x^\ast_{\rm max}, 0)\cup(x_0,x_{\rm max}) $, i.e., on $ \mc{S}^a $, recall Equation~\eqref{sa};
		\item $ \alpha x+\beta F(x)-z<0 $ for $ (x,z) \in (x^\ast_{\rm max}, 0)\times\bar I$, recall Equation~\eqref{M2} and \figref{singcycle}(a);
		\item $ \alpha x+\beta F(x)-z>0 $ for $ (x,z) \in (x_0,x_{\rm max})\times\bar I$, again by \eqref{M2} and	\figref{singcycle}(a);
		\item $ \mu+\phi \lp \sigma,F(\sigma),z\rp>0 $ for $ (x,z,\mu) \in (x^\ast_{\rm max}, 0)\times\bar I\times \overline{M}$, recall \asuref{redflow};
		\item $ \mu+\phi \lp \sigma,F(\sigma),z\rp<0 $ for $ (x,z,\mu) \in (x_0,x_{\rm max})\times\bar I\times \overline{M}$, again by \asuref{redflow}.
	\end{enumerate}
\end{proof}

The values $\mu_r^- $ and  $\mu_r^+$ for which the reduced flow on $ \mathcal{S}^{a^-}$ is balanced by that on $ \mathcal{S}^{a^+} $ at $ q^-$ and $ q^+ $, respectively, to leading order in $ \delta>0 $ are found by solving $ \mc{R}(z_{q}^-,\mu)=0$ and $ \mc{R}(z_{q}^+,\mu)=0$, respectively. The relative position of $\mu_r^- $ and  $\mu_r^+$ on the real line depends on the properties of the function $ \phi(x,y,z) $ in \eqref{normal-c}; we therefore make the following assumption. 

\begin{asu}
Denote by $ \mu_r^\mp $ the $ \mu $-values for which
\begin{align}
\mc{R}(z_{q}^-,\mu_r^-)=0 \quad\text{and}\quad \mc{R}(z_{q}^+,\mu_r^+)=0\eqlab{mump}
\end{align}
hold, i.e., define
\begin{align}
\mu_r^\mp &:= -\frac{\int_{x_{\rm max}^\ast}^{0} \frac{F'(\sigma)\phi \lp \sigma,F(\sigma),z_{q}^\mp\rp}{\alpha\sigma+\beta F(\sigma)-z_{q}^\mp} \tn{d}\sigma+\int_{x_{0}}^{x_{\rm max}} \frac{F'(\sigma)\phi \lp \sigma,F(\sigma),z_{q}^\mp\rp}{\alpha\sigma+\beta F(\sigma)-z_{q}^\mp} \tn{d}\sigma}{\int_{x_{\rm max}^\ast}^{0} \frac{F'(\sigma)}{\alpha\sigma+\beta F(\sigma)-z_{q}^\mp} \tn{d}\sigma+\int_{x_{0}}^{x_{\rm max}} \frac{F'(\sigma)}{\alpha\sigma+\beta F(\sigma)-z_{q}^\mp} \tn{d}\sigma}.
\eqlab{muclose}
\end{align}
Then, we assume that  
\begin{align*}
\mu_{q}^-<\mu_r^-<\mu_r^+<\mu_{q}^+.
\end{align*}
\asulab{mr}
\end{asu}

Clearly, \lemmaref{denomsign} now implies that the denominator in \eqref{muclose} is non-zero. \asuref{mr} is essentially an assumption on the properties of the function $ \phi(x,y,z) $ in \eqref{normal-c}, which is made for consistency with the Koper model, Equation~\eqref{koper1}, after transformation to the prototypical Equation~\eqref{normal}. We now make an additional assumption on $ \phi(x,y,z)$, which is also consistent with the Koper model.
\begin{asu}
	We assume that $ \partial_z\phi(x,y,z)\leq0 $ for $ z\in \bar I $. 
	\asulab{phiz}
\end{asu}
We remark that \asuref{phiz} is sufficient, but not necessary, for the occurrence of relaxation oscillation in \eqref{normal}; see also the discussion following \thmref{relax} below.
We now introduce a final preliminary technical result. 
%\begin{lemma}
%	Assume that \asuref{phiz} holds and that the folded singularities of \eqref{normal} are remote in the sense of \defnref{singalign}, i.e., that $\frac{\alpha}{\beta}<\frac{2f_2^2}{9f_3}$ in \eqref{normal}. Then, there holds that
%	\begin{align}
%\frac{d}{dz}\lp \int_{x_{\rm max}^\ast}^{0} \frac{F'(\sigma)\lp \mu+\phi \lp \sigma,F(\sigma),z\rp\rp}{\alpha\sigma+\beta F(\sigma)-z} \tn{d}\sigma+\int_{x_{0}}^{x_{\rm max}} \frac{F'(\sigma)\lp \mu+\phi \lp \sigma,F(\sigma),z\rp\rp}{\alpha\sigma+\beta F(\sigma)-z} \tn{d}\sigma\rp <0.
%	 \eqlab{partzInt}
%	\end{align}
%	for $ z\in (z_{q}^-,z_{q}^+) $.  
%	\lemmalab{partzInt}
%\end{lemma}
\begin{lemma}
	Assume that \asuref{phiz} holds and that the folded singularities of Equation~\eqref{normal} are remote in the sense of \defnref{singalign}, i.e., that $\frac{\alpha}{\beta}<\frac{2f_2^2}{9f_3}$ in \eqref{normal}. Then, the following holds for 	$ z\in \bar I\times \left[\mu_r^-,\mu_r^+\right] $:
	\begin{align}
	\begin{aligned}
	\partial_\mu \mc{R}(z,\mu)<0\\
%	&\qquad \qquad= \int_{x_{\rm max}^\ast}^{0}\frac{F'(\sigma)\phi_z \lp \sigma,F(\sigma),z\rp\lp\alpha\sigma+\beta F(\sigma)-z\rp }{\lp\alpha\sigma+\beta F(\sigma)-z\rp^2 } {d}\sigma \\
%	&\qquad \qquad \qquad+ \int_{x_{0}}^{x_{\rm max}} \frac{F'(\sigma)\phi_z \lp \sigma,F(\sigma),z\rp\lp\alpha\sigma+\beta F(\sigma)-z\rp }{\lp\alpha\sigma+\beta F(\sigma)-z\rp^2 } {d}\sigma -\mc{R}(z,\mu),
	\end{aligned}
	\eqlab{partzInt}
	\end{align}
%	where
%	\begin{align}
%	\int_{x_{\rm max}^\ast}^{0}\frac{F'(\sigma)\phi_z \lp \sigma,F(\sigma),z\rp\lp\alpha\sigma+\beta F(\sigma)-z\rp }{\lp\alpha\sigma+\beta F(\sigma)-z\rp^2 } {d}\sigma<0
%	\end{align}
%	and
%	\begin{align}
%	\int_{x_{0}}^{x_{\rm max}} \frac{F'(\sigma)\phi_z \lp \sigma,F(\sigma),z\rp\lp\alpha\sigma+\beta F(\sigma)-z\rp }{\lp\alpha\sigma+\beta F(\sigma)-z\rp^2 } {d}\sigma <0.
%	\end{align}
and 	
\begin{align}
\begin{aligned}
\partial_z\mc{R}(z,\mu)>0.
\end{aligned}
\eqlab{partRmu}
\end{align}
	\lemmalab{partzInt}
\end{lemma}
\begin{proof}
The proof is similar to that of \lemmaref{denomsign}.
\end{proof}

We now state our main result in this section:
\begin{theorem}
Assume that \asuref{albeta} through \asuref{phiz} hold, that the folded singularities of Equation~\eqref{normal} are remote in the sense of \defnref{singalign}, i.e., that $\frac{\alpha}{\beta}<\frac{2f_2^2}{9f_3}$ in \eqref{normal}, and fix $\mu\in(\mu_r^-, \mu_r^+)$. Then, there exist $ \varepsilon_0, \delta_0>0 $ sufficiently small such that Equation~\eqref{normal} admits a stable relaxation oscillation orbit for all $\lp\varepsilon,\delta\rp \in (0,\varepsilon_0)\times (0,\delta_0) $.
\thmlab{relax}
\end{theorem}

\begin{proof}
	The proof in based on showing that the assumptions of \cite[Theorem 4]{szmolyan2004relaxation} are satisfied.

	\begin{itemize}
		\item \underline{Assumption 1 in \cite[Theorem 4]{szmolyan2004relaxation}}. By construction, the manifold $ \mc{M}_1 $ is $ S $-shaped, with two attracting sheets $ \mc{S}^{a^\mp} $ separated by a repelling sheet $ \mc{S}^r $;  recall \eqref{els}, \eqref{sa}, and \eqref{sr}. 
		
		\item \underline{Assumption 2 in \cite[Theorem 4]{szmolyan2004relaxation}}. The tranversality condition is satisfied on $ \mc{L}^\mp\lvert_{I} $.
		
		\item \underline{Assumptions 3 and 4 in \cite[Theorem 4]{szmolyan2004relaxation}}. In the singular limit of $ \varepsilon=0=\delta $, one can construct a $1$-dimensional map
		\begin{align}
		\sigma^-:\ P(\mc{L}^+)\lvert_{I} \ \to \ \mc{L}^-,  \qquad z\ \mapsto \ z, \eqlab{sigmin}
		\end{align}
		where we recall that $ P:\mc{L}^\mp\to \mc{S}^{a^\pm} $ is the projection of a point on $ \mc{L}^\mp$ to a point on $ \mc{S}^{a^\pm} $ along the fast fibres of \eqref{norm12lay-a}.  
		For $ \delta>0 $ sufficiently small, the map $ \sigma^- $ perturbs smoothly to
		\begin{gather}
		\begin{aligned}
		\sigma^-_\delta:\ P(\mc{L}^+)\lvert_{I} \ \to \ \mc{L}^-, \qquad
		z\ \mapsto \ z+\mc{O}(\delta); 
		\end{aligned}
		\end{gather}
		in particular, by eliminating time in \eqref{classy-des}, we may approximate the map $ \sigma^-_\delta $ by
		\begin{align}
		z\ \mapsto \ z+\delta \mc{G}_0^-(z,\mu)+\mc{O}(\delta^2). \eqlab{sigdel}
		\end{align}
		Moreover, since
		\begin{align}
		\begin{aligned}
		\mc{G}_0^-(z_{q}^-,\mu) &>0
		\end{aligned} 
		\eqlab{G0-}
		\end{align}
		for $ \mu \in M$, by \lemmaref{partzInt}, the map $ \sigma^-_\delta: P(\mc{L}^+)\lvert_{I} \ \to \ \mc{L}^-\lvert_{\{z>z_{q}^-\}}$ is well-defined, with $ \sigma_\delta^-(z)\to \sigma^-(z) $ as $ \delta\to0 $ uniformly in $ z$. In particular, \eqref{G0-} implies that the point on $ P(\mc{L}^+) $  with $ z=z_{q}^- $ is mapped to a point on $ \mc{L}^- $ with $ z>z_{q}^- $. Since the map $ \sigma_\delta^- $ is induced by the reduced flow on $ \mc{S}^{a^-} $, it follows by existence and uniqueness of solutions that all points on $ P(\mc{L}^+) $  with $ z>z_{q}^- $ are mapped to points on $ \mc{L}^- $ with $ z>z_{q}^- $. Similarly, one can construct a map $ \sigma^+_\delta: P(\mc{L}^-)\lvert_{I} \ \to \ \mc{L}^+\lvert_{\{z<z_{q}^+\}}$, under which points on $ P(\mc{L}^-) $  with $ z\leq z_{q}^+ $ are mapped to points on $ \mc{L}^+ $ with $ z<z_{q}^- $. 
		
		The composition of $ \sigma^-_\delta$ and $\sigma^+_\delta $ defines the return map
		\begin{gather}\eqlab{ellmin}
		\begin{aligned}
		\pi^-:=\sigma^-_\delta \circ  \sigma^+_\delta:	\ \mc{L}^-\lvert_I \ \to \ \mc{L}^-\lvert_I, \qquad
		z\ \mapsto \ z + \delta\mc{R}(z,\mu). 
		\end{aligned}
		\end{gather}
		Since
		\begin{align*}
		\begin{aligned}
		\partial_\mu \mc{R}(z,\mu)>0 \quad\text{for all }(z,\mu) \in I\times (\mu_{r}^-, \mu_{r}^+),
		\end{aligned}
		\end{align*}
		i.e., since $ \mc{R}(z,\mu) $ is an increasing function of $ \mu $ by \lemmaref{partzInt}, and since, moreover,
		\begin{align*}
		\mc{R}(z_{q}^-,\mu_r^-)=0\quad\text{and}\quad\mc{R}(z_{q}^+,\mu_r^+)=0
		\end{align*} 
		by \asuref{mr}, it correspondingly follows that 
		\begin{align*}
		\mc{R}(z_{q}^-,\mu)>0\quad\text{and}\quad		\mc{R}(z_{q}^+,\mu)<0 \quad\text{for all } \mu\in(\mu_{r}^-, \mu_{r}^+),
		\end{align*} 
		 and the map $ \pi^-:	\mc{L}^-\lvert_I \ \to \ \mc{L}^-\lvert_I $ is therefore well defined. By the intermediate value theorem, for any $ \mu\in(\mu_{r}^-, \mu_{r}^+) $, there exists $ z^\ast \in I $ such that $ \mc{R}(z^*,\mu)=0 $; therefore, $ z^\ast $ is a fixed point of $ \pi^- $. Moreover, from 	\eqref{partzInt}, it follows that $ \partial_z \mc{R}(z,\mu)<0 $ for all $ \lp z,\mu\rp \in\bar I\times\left[\mu_{r}^-, \mu_{r}^+\right]$; therefore, the fixed point $ z^\ast \in I $ is unique, which implies the existence of a unique singular cycle for any $ \delta>0 $ sufficiently small.
		
		\item \underline{Assumption 5 in \cite[Theorem 4]{szmolyan2004relaxation}}. Consider the section 
		\begin{align*}
		\Sigma^- &= \lb \lp x,y,z\rp \in \mb{R}^3\ \lvert \ x<x_{q}^-\ \text{and}\ y = y_{q}^-+\rho_1, \ \text{with}\ \rho_1>0 \rb;
		\end{align*}
		see \figref{ro}. The return map $ \pi^-:	\mc{L}^-\ \to \ \mc{L}^-$  in \eqref{ellmin} induces a return map $ \Pi^-:\Sigma^-\to \Sigma^- $. Correspondingly, fixed points of $ \pi^- $ on $ \mc{L}^- $ are connected to fixed points of $ \Pi^- $ on $ \Sigma^- $ via trajectories of the reduced flow in \eqref{classy-des} on $ \mc{S}^{a^-} $. In particular, a fixed point of $ \Pi^- $ is hyperbolic and attracting, respectively repelling, if and only if the corresponding fixed point of $ \pi^- $ is hyperbolic and attracting, respectively repelling. 
		%Under the previous item, we showed that, for any $ \mu\in(\mu_{r^-}, \mu_{r^+}) $, there exists a unique fixed point $ z^\ast\in I $ of $ \pi^-$. Moreover, 
		Since $ \partial_z \lp z+\delta\mc{R}(z,\mu)\rp  <1 $ for all $ z\in\bar I $, by \eqref{partzInt}, $ z^\ast $ is attracting in our case, as is the corresponding fixed point of $ \Pi^- $ on $ \Sigma^- $, which shows the existence of a hyperbolic and attracting singular periodic orbit.
	\end{itemize}
The above assumptions are established in the singular limit of $ \varepsilon=0 $, with $ \delta>0 $ sufficiently small. For $ \varepsilon, \delta>0 $ small and $ z\in I $, a trajectory with initial condition $ (x^\ast_{\rm max}, y_{q}^+, z) $ on $ \mc{S}^{a^-}_{\varepsilon\delta} $ -- i.e., at the height of $ P(\mc{L}^+) $ -- is mapped to a point on $ \mc{S}^{a^-}_{\varepsilon\delta} $ under the flow of \eqref{normal} and following a large excursion: 
\begin{align}
z\mapsto z+\delta\mc{R}(z,\mu)+\mc{O}(\delta^2,\delta\varepsilon\ln\varepsilon). \eqlab{reter}
\end{align} 
The above map is smooth in $ z $ and $ \mu $. To see that \eqref{reter} holds, we note that in the double singular limit of $ \varepsilon=0=\delta $, the return map is the identity which, for $ \varepsilon=0 $ and $ \delta>0$ small, then perturbs to \eqref{ellmin}. By \cite[Theorems 3 and 4]{szmolyan2004relaxation}, an $ \mc{O}(\varepsilon\ln\varepsilon) $-contribution arises through the perturbation of the functions $ \mc{G}^\mp_0 $ for $ \varepsilon,\delta>0 $ sufficiently small which is multiplied by a factor of $ \delta $ in our case, since the $ z $-displacement needs to be zero for $ \delta=0 $ regardless of $ \varepsilon $, as can be seen by setting $\delta=0$ in \eqref{norm12fast}. Hence, \eqref{reter} follows.

\end{proof}

We reiterate that \asuref{phiz} is sufficient, but not necessary for stable relaxation oscillation to occur, as described in \thmref{relax}. Specifically, \lemmaref{partzInt} guarantees the existence \textit{and uniqueness} of a singular cycle in the limit of $ \varepsilon=0 $, with $ \delta>0 $ small; see the third bullet point in the proof of \thmref{relax}. In general, the function $ \phi(x,y,z)$ could be such that more than one singular cycle exists in that limit. The stability of each of those would be studied individually, which  would result in the existence of more than one relaxation oscillation orbit for $ \varepsilon,\delta>0 $ small, some of which could potentially be stable, while others could be unstable. We reiterate that \asuref{phiz} is satisfied for the Koper model from chemical kinetics which is studied in \secref{koper}. 

We now state our main result on the existence of MMOs with single epochs of perturbed slow dynamics. 
%\begin{align}
%\sigma:\qquad	P(\mc{L}^+)\lvert_{z\in I} \ \to \ P(\mc{L}^+),  \qquad 	z\ \mapsto \ z
%\end{align}
%first in $ \varepsilon $ we obtain
%\begin{align}
%\sigma_\varepsilon:\qquad	P(\mc{L}^+)\lvert_{z\in I} \ \to \ P(\mc{L}^+),  \qquad 	z\ \mapsto \ z,
%\end{align}
%and by then perturbing in $ \delta $ we obtain
%\begin{align}
%\sigma_{\varepsilon\delta}:\qquad	P(\mc{L}^+)\lvert_{z\in I} \ \to \ P(\mc{L}^+),  \qquad 	z\ \mapsto \ z+\delta\lp R(z,\mu) +\mc{O}(\varepsilon)\rp; \eqlab{oed}
%\end{align}
%therefore, the contribution of

\begin{theorem}
Assume that \asuref{albeta} through \asuref{phiz} hold, that the folded singularities of Equation~\eqref{normal} are remote in the sense of \defnref{singalign}, i.e., that $\frac{\alpha}{\beta}<\frac{2f_2^2}{9f_3}$ in \eqref{normal}, and fix $\mu\in(\mu_{q}^-, \mu_r^-)$. Then, there exist $ \varepsilon_0, \delta_0>0 $ sufficiently small such that Equation~\eqref{normal} features MMOs with single SAO epochs for all $\lp\varepsilon,\delta\rp \in (0,\varepsilon_0)\times (0,\delta_0) $.
	\thmlab{mmo}
\end{theorem}

\begin{proof}
	Consider the section
	\begin{align*}
	\Sigma^- &= \lb \lp x,y,z\rp \in \mb{R}^3\ \lvert \ x<x_{q}^-\ \text{and}\ y = y_{q}^-+\rho_1, \ \text{with}\ \rho_1>0 \tn{ small}\rb.
%	\Delta &= \lb \lp x,y,z\rp \in \mb{R}^3\ \lvert \ x=\frac{x_{q}^+}{2}\rb	
	\end{align*} 
	As in \thmref{conne} and \thmref{relax}, the return map $ \pi^-:\Sigma^-\to\Sigma^- $ induced by the flow of \eqref{normal} is well defined for $ \mu \in ( \mu_{q}^-,\mu_r^- ) $ and $ \varepsilon, \delta>0 $ sufficiently small.
	
	\begin{figure}[ht!]
		\centering
		\begin{subfigure}[b]{0.45\textwidth}
			\centering
			\includegraphics[scale = 0.40]{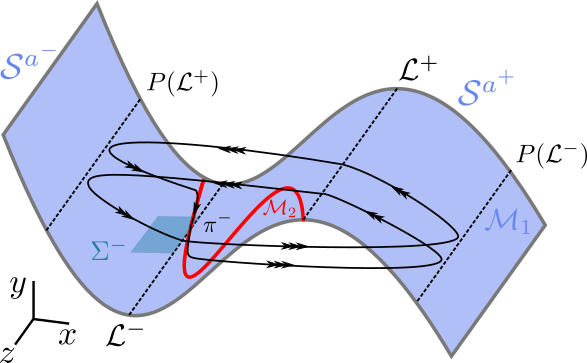}
			\caption{}
		\end{subfigure}
		~
		\begin{subfigure}[b]{0.45\textwidth}
			\centering
			\includegraphics[scale = 0.45]{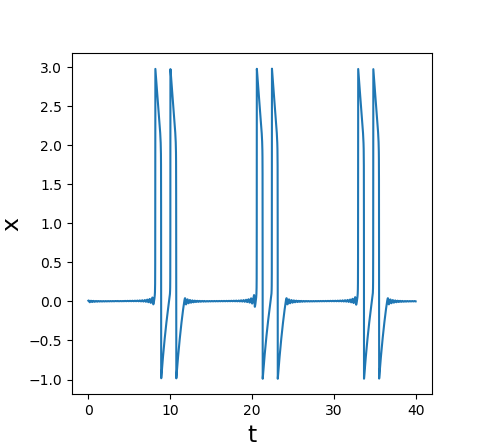}
			\caption{}
		\end{subfigure}
		\caption{Schematic illustration of the emergence of MMO trajectories with single epochs of perturbed slow dynamics in \eqref{normal}, for the case of remote singularities: (a) singular geometry and return map; (b) corresponding time series for the Koper model studied in \secref{koper}.}
		\figlab{mmo}
	\end{figure}

	Consider now a point $ (x^\ast_{\rm max}, y_{q}^+, z) $ on $ \mc{S}^{a^-}_{\varepsilon\delta} $, i.e., at the height of $ P(\mc{L}^+) $, with $ z\in I $. The corresponding trajectory returns to $ \mc{S}^{a^-}_{\varepsilon\delta} $, after a large excursion, at a point with $ z+\delta\mc{R}(z,\mu)+\mc{O}(\delta^2,\delta\varepsilon\ln\varepsilon) $; recall the proof of \thmref{relax}. Moreover, there holds that  $\mc{R}(z,\mu)<0$ for all $ (z,\mu)\in I\times (\mu_{q}^-,\mu_r^-) $, which follows from \eqref{partzInt} and \eqref{partRmu}. Therefore, the trajectory ``drifts'' towards the negative $ z $-direction until it reaches a point $ (x^\ast_{\rm max}, y_{q}^+, z) $ on $ \mc{S}^{a^-}_{\varepsilon,\delta} $ with $ z<z_{q}^- $, i.e., until it enters the funnel of $ q^- $. We reiterate that, in a first approximation, the funnel of $ q^- $ is the area in $ \mc{S}^{a^-} $ bounded by $ \mc{Z}^{-} $ and the intermediate fibre of \eqref{intermediate} that crosses $ q^- $, which is given by $ \lb z= z_{q}^-\rb$. 
	
	Points $ (x^\ast_{\rm max}, y_{q}^+, z) $ on $ \mc{S}^{a^-}_{\varepsilon\delta} $ with $ z<z_{q}^- $ are attracted  to the vicinity of $ q^- $ and undergo SAOs. According to \cite{krupa2010local,hayes2016geometric}, the buffer point beyond which all trajectories have to diverge exponentially from $ \mc{Z}^{-} $ lies $ o(1) $-close to $q^-$. Therefore, there exist $ \varepsilon_0,\delta_0>0 $ sufficiently small, which satisfy in particular $\varepsilon_0<\lp z_{q}^+-z_{q}^-\rp^2 $ and $ \delta_0< z_{q}^+-z_{q}^-$, such that, for all $ (\varepsilon,\delta)\in (0,\varepsilon_0)\times(0,\delta_0) $, trajectories that diverge exponentially from $ \mc{Z}^{-}_{\varepsilon\delta} $ undergo a slow drift towards the negative $z$-direction, without interacting with $ \mc{Z}^{+}_{\varepsilon\delta} $, until they enter the funnel of $ q^- $. The above implies the existence of MMO trajectories with single epochs of perturbed slow dynamics, as claimed. 	
\end{proof}

The requirement that $\varepsilon_0<\lp z_{q}^+-z_{q}^-\rp^2 $ and $ \delta_0< z_{q}^+-z_{q}^-$ is a sufficient condition which guarantees that MMOs with single epochs of perturbed slow dynamics exist, regardless of how close the folded singularities are in the $ z$-direction. As will become apparent in \secref{local} below, the buffer point near $ \mc{L}^- $ lies $ \mc{O}(\sqrt{\varepsilon},\delta) $-close to the fold line $ \mc{L}^- $. Hence, trajectories which diverge exponentially from $ \mc{Z}^{-} $ are not able to reach $ \mc{Z}^{+} $. If the singularities $ q^\mp $ are remote, but sufficiently close in the $ z $-direction, then trajectories can potentially interact with both $ \mc{Z}^{\mp} $ for (sufficiently large) values of $ \varepsilon $ and $ \delta $; correspondingly, Equation~\eqref{normal} can feature MMOs with double epochs of perturbed slow dynamics, or even more exotic patterns where the two epochs are separated by LAOs, in that case; see \cite{hhgspt2019} for an example of the latter in the context of the multi-timescale Hodgkin-Huxley equations from mathematical neuroscience. We remark that we have not been able to find such ``exotic'' behaviour in the context of the Koper model from chemical kinetics; cf.~\secref{koper}.

Finally, in regard to the number of LAOs that can occur between SAO segments, we have the following result. 
\begin{cor}
Assume that \asuref{albeta} through \asuref{phiz} hold, that the folded singularities of Equation~\eqref{normal} are remote in the sense of \defnref{singalign}, i.e., that $\frac{\alpha}{\beta}<\frac{2f_2^2}{9f_3}$ in \eqref{normal}, fix $ \mu\in\lp \mu_{q}^-, \mu_r^-\rp $ and consider $ \varepsilon,\delta>0 $ sufficiently small. Denote by $p_{\rm out} = \lp x_{\rm out}, y_{\rm out}, z_{\rm out}\rp $ the point at which a given trajectory diverges exponentially from $ \mathcal{Z}_{\varepsilon\delta}^{-} $, with $ z_{\rm out}>0 $, and denote by $L$ the number of large excursions that follow before the trajectory is again attracted exponentially close to $ \mathcal{Z}_{\varepsilon\delta}^{-} $. Then, the following holds.
	\begin{enumerate}	
		\item If $z_{\rm out}+\delta\mc{R}(0,\mu) <0$, then $L=1$; 
		\item if $0<z_{\rm out}+\delta\mc{R}(0,\mu)  <z_{\rm out}$, then
		\begin{align}
		L = 1+\left\lfloor \frac{z_\textnormal{out}}{\delta\mc{R}(0,\mu)} \right\rfloor, 
		\eqlab{ell}
		\end{align}
		where $\lfloor~\rfloor$ denotes the floor function.
	\end{enumerate}
	\proplab{LAObounds}
\end{cor}
\begin{proof}
	Both statements follow immediately from \thmref{mmo}.
\end{proof}

\subsection{Summary}\seclab{summary}
In summary, the emergence of mixed-mode dynamics in \eqref{normal} can thus be understood as follows. By standard GSPT \cite{fenichel1979geometric},
%,cardin2017fenichel}, 
the normally hyperbolic portions $ \mathcal{S}^{a^\mp}$ and $ \mc{Z}^{\mp}$ of $\mathcal{M}_1$ and
$\mathcal{M}_2$, respectively, perturb to $ \mathcal{S}^{a^\mp}_{\varepsilon\delta} $ and $ \mc{Z}^{\mp}_{\varepsilon\delta} $, respectively.
Given an initial point $ (x,y,z)\in \mathcal{S}^{a^-}_{\varepsilon\delta} $, the corresponding trajectory will follow the intermediate flow on $\mathcal{S}^{a^-}_{\varepsilon\delta}$ until it is either attracted to $\mathcal{Z}^{-}_{\varepsilon\delta} $ or until it reaches the vicinity of $ \mathcal{L}^- $. In the former case, the trajectory then follows the slow flow on $\mathcal{Z}^{-}_{\varepsilon\delta} $ and can undergo SAOs; in the latter case, no slow dynamics occurs, and the trajectory jumps near $ \mathcal{L}^- $ to the opposite attracting sheet $\mathcal{S}^{a^+}_{\varepsilon\delta} $, resulting in a {large excursion}. The above sequence then begins anew; see \figref{conp}, \figref{ro}, and \figref{mmo} for schematic illustrations: depending on the relative geometry of the folded singularities $q^\mp$ of $\mathcal{M}_1$, oscillatory trajectories with single, double, or no epochs of slow dynamics can occur, as indicated in \figref{muplane}. 

We emphasise that the ``double epoch'' regime in panel (b) of \figref{muplane} does not necessarily imply mixed-mode dynamics with two epochs of SAOs but, rather, with double epochs of \textit{perturbed slow dynamics} of the corresponding singular cycles. That is, MMO trajectories are attracted to the vicinity of both branches $ \mc{Z}^{\mp}_{\varepsilon\delta} $ and hence exhibit slow dynamics; however, whether SAOs will occur depends on which region on $ \mathcal{Z} $ trajectories enter: by \lemmaref{delayedH} in Appendix~\ref{mechs}, they may experience either focal or nodal attraction. In particular, if a trajectory is attracted to the focal region on both $ \mathcal{Z}^{-}_{\varepsilon\delta} $ and $ \mc{Z}^{+}_{\varepsilon\delta} $, then two epochs of SAOs are observed. On the other hand, trajectories that are first attracted to the focal region on, say, $ \mathcal{Z}^{-} $ before being attracted to and repelled from the nodal region on $ \mc{Z}^{+} $ feature SAOs below and mere slow dynamics above. (The corresponding segment of the associated Farey sequence would be $ 1_s1^0 $, with $ s>0 $.) Similarly, a trajectory that is attracted to and repelled from nodal regions on both $ \mathcal{Z}^{-} $ and $ \mc{Z}^{+} $ features no SAOs at all and is hence a relaxation oscillation with fast, intermediate, and slow components; the associated Farey sequence would be $ 1^01_0 $. In the transition between remote and connected singularities, exotic MMO trajectories may occur which contain segments of two-timescale relaxation oscillation, SAOs above, and SAOs below. (The associated Farey sequence would be $ 1^sL_k $, with $ L,s, k>0 $.) Moreover, we postulate that chaotic mixed-mode dynamics may be possible. However, the above characterisation depends substantially on the particular form of the function $ \phi $ in \eqref{normal-c}; it is hence not feasible to further subdivide that region in \figref{muplane} on the basis of system parameters alone. Rather, a case-by-case study is required.

Finally, we remark on the role of the ratio between the scale separation parameters $ \varepsilon $ and $ \delta $ for the dynamics of Equation~\eqref{normal}. Locally, in order for the system to exhibit three timescales and for the iterative reduction from the fast via the intermediate to the slow dynamics to be accurate, $ \varepsilon $ and $\delta$ need to be sufficiently small, which is akin to asking ``When is $ \varepsilon $ small enough?" in a two-timescale system. We recall that the resulting SAOs will be either of sector type or of delayed Hopf type; again by \lemmaref{delayedH}, the width of the corresponding regions on $ \mathcal{Z} $ is either $ \mathcal{O}(\varepsilon) $ or $ \mathcal{O}(\sqrt{\varepsilon})$. Correspondingly,. By \cite{krupa2008mixed} and \lemmaref{strong}, the ``step'' in the $ z $-direction taken by trajectories after a large excursion and re-injection is $ \mathcal{O}(\delta) $; it therefore follows that if $ \delta =\mathcal{O}(\varepsilon^c)$ for $ 0<c<1 $, then trajectories will typically not undergo sector-type dynamics,
%be attracted to $ \mathcal{Z}^{-}\lvert_{\mathcal{I}_{\rm can}} $, 
since the width of the latter regime is $ \mathcal{O}(\varepsilon) $. Hence, delay-type SAOs are expected to dominate in that case; see \figref{sechof}.

\section{The Koper model revisited}
\seclab{koper}
%The Koper model from chemical kinetics \cite{koper1995bifurcations} reads as
%\begin{align}
%\begin{aligned}
%\epsilon \dot{x} &= ky+3x-x^3-\lambda\\
%\dot{y} &= x-2y-z\\
%\dot{z} &= \delta \lp y-z\rp
%\end{aligned} \eqlab{koper}
%\end{align}
%with parameters $0<\epsilon,\delta\leq1$, $k,\lambda\in\mb{R}$ and where the overdot denotes differentiation with respect to the intermediate time $t$.  The Koper model \eqref{koper} is then a particular realization of our extended prototypical example Equation~\eqref{normal} and after a series of affine transformations it can be written in the form of Equation~\eqref{normal}
%with:
In this section, we revisit the Koper model from chemical kinetics, Equation~\eqref{koper1}, which we reiterate to be a particular realisation of Equation~\eqref{normal} for
\begin{subequations}\eqlab{kop_params1}
\begin{gather}
\varepsilon = \frac\epsilon{|k|},\quad f_2 = \frac3{|k|}, \quad f_3 = -\frac1{|k|}, \\
\alpha = 1, \quad \beta = -2, \\
\mu = \frac{k+\lambda+2}k,\quad\text{and}\quad \phi(x,y,z) = -y-z,
\end{gather}
\end{subequations}
after the transformation
$\lp x,y,z,\lambda,k,t\rp \to \lp x+1,y+\frac{2+\lambda}{\left|k\right|},-z-1+\frac{2(2+\lambda)}{\left|k\right|},\lambda,k,t\rp$. Henceforth, we will refer to \eqref{normal} with the above choice of parameters as the Koper model; here, $ k<0 $ and $ \lambda\in\mb{R} $ will be our bifurcation parameters. 
%We will restrict our analysis to the case with $k<0$ and $0<\epsilon,\delta\ll1$

From \secref{singgeom}, it is apparent that the effect of the parameter $ k $ on the dynamics is more substantial than that of $ \lambda $, since variation in $k$ simultaneously affects
the timescale separation (through $ \varepsilon $) and the singular geometry (through $ f_2 $ and $ f_3 $), as well as the slow flow and the global return (through $ \mu $). Given $k<0$ fixed, on the other hand, variation in $ \lambda $ only affects the slow flow and the global return (through $ \mu $). It is therefore the parameter $ k $ that determines whether the folded singularities in the Koper model are remote, aligned, or connected, and whether the model can exhibit single or double epochs of SAOs. For given $ k<0 $, the parameter $ \lambda $ can differentiate between steady-state and oscillatory behaviour, as well as between mixed-mode dynamics and relaxation oscillation in the case of remote singularities.

\begin{remark}
Alternatively, the Koper model can be written in the symmetric form
\begin{align*}
\epsilon \dot{x} &= y-x^3+3x, \\
\dot{y} &=kx-2\lp y+\lambda\rp+z, \\
\dot{z} &=\delta(\lambda+y-z),
\end{align*}
which is invariant under the transformation $\lp x,y,z,\lambda,k,t\rp \to \lp -x,-y,-z,-\lambda,k,t\rp$ \cite{desroches2012mixed}. 
\remlab{symmetry}
\end{remark}

In the following, we will restrict to the case where $\lambda>0$ in \eqref{koper1}. Moreover, we will investigate the dynamics near $\mathcal{L}^-$ only: by \remref{symmetry}, the flow near $\mathcal{L}^+$ for $\lambda<0$ can then be inferred by symmetry; cf.~also panels (a) and (b) in  \figref{fareys}, where the corresponding time series are seen to be symmetric about  the $t$-axis for $ k $ fixed and $ \lambda\to-\lambda $.

\subsection{Singular geometry}
The critical and supercritical manifolds $\mathcal{M}_1$ and $\mathcal{M}_2$, respectively, for the Koper model are given by
\begin{align*}
\mathcal{M}_1 &= \lb  \lp x,y,z\rp \in\mb{R}^3~\Big\lvert ~ y = x^2\frac{3-x}{|k|} \rb\quad\text{and} \\
\mathcal{M}_2 &= \lb  \lp x,y,z\rp \in\mathcal{M}_1~\Big\lvert~ z = x-2x^2\frac{3-x}{|k|} \rb;
\end{align*}
see \secref{singgeom}. The normally hyperbolic portion $\mathcal{S}$ of the critical manifold $ \mathcal{M}_1 $ can be written as
\begin{align}
\begin{aligned}
\mathcal{S} &= \mathcal{S}^{a^-}\cup \mathcal{S}^r\cup\mathcal{S}^{a^+},
\end{aligned}
\end{align}
where
\begin{gather*}
\mathcal{S}^{a^-} = \lb  \lp x,y,z\rp \in\mathcal{M}_1~\big\lvert ~ x<0\rb,\quad  
\mathcal{S}^r = \lb  \lp x,y,z\rp \in\mathcal{M}_1~\big\lvert ~ 0<x<2\rb,\quad\text{and} \\
\mathcal{S}^{a^+} = \lb  \lp x,y,z\rp \in\mathcal{M}_1~\big\lvert ~ x>2\rb
.
\end{gather*}
%and $ \mathcal{S}^a = \mathcal{S}^{a^-}\cup\mathcal{S}^{a^-} $
%\begin{align*}
%\mathcal{S}^{a^-} = \lb  \lp x,y,z\rp \in\mathcal{M}_1~\big\lvert ~ x<0\rb, \qquad \mathcal{S}^{a^+} = \lb  \lp x,y,z\rp \in\mathcal{M}_1~\big\lvert ~ x>2\rb,
%\end{align*}
%i.e., $ \mathcal{M}_1 $ is two-dimensional,  $ S $-shaped and consists of a repelling middle branch $ \mathcal{S}^{r} $ and two attracting outer branches $ \mathcal{S}^{a^\pm} $.

The fold lines of $ \mathcal{M}_1 $ are located at
\begin{align}
\mathcal{L}^{-} = \lb \lp x,y,z\rp\in\mb{R}^3~\big\lvert~x = 0,~y=0\rb\quad\text{and}\quad\mathcal{L}^+ = \lb \lp x,y,z\rp \in\mathbb{R}^3 ~\Big\lvert~ x  =2,~y = \frac4{|k|}\rb;
\end{align}	
the corresponding folded singularities $q^\mp$ are found at
\begin{align}
%\mathcal{P}^- = \lb \lp x,y,z\rp\in \mb{R} ~\lvert~x=y=z=0 \rb
q^-=(0,0,0)\quad\text{and}\quad
q^+ = \lp 2,~ \frac{4}{|k|},~2-\frac{8}{|k|}\rp.
\eqlab{sings}
\end{align}
We have the following result on the relative position of the singularities $q^\mp$:
\begin{proposition}
Let $\varepsilon=0=\delta$. Then, the folded singularities of the Koper model are aligned for $k=-4$, connected when $-4<k<0$, and remote for $k<-4$. 
\proplab{kopal}
\end{proposition}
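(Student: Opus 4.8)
The plan is to read off the classification by direct substitution of the Koper parameter values from \eqref{kop_params} into the criteria of \propref{relatives}, which already expresses the relative position of the folded singularities $q^\mp$ purely in terms of the ratios $\frac{\alpha}{\beta}$ and $\frac{2f_2^2}{9f_3}$. First I would record that $\alpha=1>0$ and $\beta=-2<0$, so that $\alpha\beta<0$; hence we fall into the first case of \propref{relatives}, and the trichotomy is governed entirely by the sign of $\frac{\alpha}{\beta}-\frac{2f_2^2}{9f_3}$.

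Next I would evaluate the two ratios. Since $\alpha=1$ and $\beta=-2$, we have $\frac{\alpha}{\beta}=-\frac12$, independently of $k$. For the second ratio, using $f_2=\frac{3}{|k|}$, $f_3=-\frac{1}{|k|}$, and $k^2=|k|^2$, one computes
\[
\frac{2f_2^2}{9f_3}=\frac{2\cdot 9/k^2}{9\cdot(-1/|k|)}=-\frac{2}{|k|}.
\]
The alignment condition $\frac{\alpha}{\beta}=\frac{2f_2^2}{9f_3}$ then reads $-\frac12=-\frac{2}{|k|}$, i.e.\ $|k|=4$; since $k<0$ this is exactly $k=-4$. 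The connected condition $\frac{\alpha}{\beta}>\frac{2f_2^2}{9f_3}$ becomes $-\frac12>-\frac{2}{|k|}$, equivalently $\frac{2}{|k|}>\frac12$, i.e.\ $|k|<4$, i.e.\ $-4<k<0$. Finally the remote condition $\frac{\alpha}{\beta}<\frac{2f_2^2}{9f_3}$ becomes $|k|>4$, i.e.\ $k<-4$. This reproduces the three cases in the statement.

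Since this reduces to a one-line substitution, there is no genuine analytic obstacle; the only point that requires care is the sign bookkeeping, because $k<0$ forces $|k|=-k$ and both ratios are negative, so each inequality reverses when the denominators are cleared. As an independent cross-check of at least the alignment boundary, I would recover it directly from the coordinates in \eqref{sings}: the normal planes are $\mathcal{P}^-=\{z=0\}$ and $\mathcal{P}^+=\{z=2-\frac{8}{|k|}\}$, so $\mathcal{P}^-\equiv\mathcal{P}^+$ precisely when $2-\frac{8}{|k|}=0$, i.e.\ $|k|=4$, in agreement with the computation above.

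One caveat worth flagging is that the two fold points $p^\mp$ of $\mathcal{M}_2$ exist only while $\beta^2f_2^2-3\alpha\beta f_3=6(6-|k|)/|k|^2>0$, that is, for $|k|<6$, whereas the remote range extends to all $k<-4$. This is consistent rather than contradictory: in the remote regime the singular cycles of \propref{double} evolve on the planes $\mathcal{P}^\mp$ and do not engage the slow flow on $\mathcal{M}_2$, so the classification supplied by \propref{relatives} persists beyond the parameter window in which $\mathcal{M}_2$ retains its fold points.
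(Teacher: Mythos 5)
Your proof is correct and follows essentially the same route as the paper, which likewise deduces the result from \propref{relatives} together with the parameter values in \eqref{kop_params}, with the comparison of the $z$-coordinates of $q^\mp$ as an alternative; you have simply written out the substitution (and the cross-check) explicitly. The sign bookkeeping and the computation $\frac{2f_2^2}{9f_3}=-\frac{2}{|k|}$ are accurate, so nothing is missing.
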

\begin{proof}
The statement follows from \propref{relatives} and \eqref{kop_params}, or by comparison of the $ z $-coordinates of $ q^- $ and $ q^+ $.
\end{proof}

The supercritical manifold $\mathcal{M}_2$ is normally hyperbolic everywhere except at the fold points $p^\mp$, where
\begin{gather}
\begin{gathered}
x_p^\mp = 1\pm\sqrt{1-\tfrac{|k|}6},\quad y_p^\mp = \frac{\Big(2\pm\sqrt{1-\tfrac{|k|}{6}}\Big)\Big(1\mp\sqrt{1-\tfrac{|k|}{6}}\Big)^2}{|k|},\quad\text{and} \\
z_p^\mp = 1\mp\sqrt{1-\tfrac{|k|}{6}}-2\frac{\Big(2\pm\sqrt{1-\tfrac{|k|}{6}}\Big) \Big(1\mp\sqrt{1-\tfrac{|k|}{6}}\Big)^2}{|k|}.
\end{gathered}
\eqlab{foldloc}
\end{gather}
Based on the above, we have the following
\begin{proposition}\proplab{kopfolds}
If $-6<k<0$, then $\mathcal{M}_2$ admits two fold points which are located between the points of intersection of $\mathcal{M}_2$ with $\mathcal{L}^{\mp}$, i.e., on the repelling sheet of $\mathcal{M}_1$. If $k<-6$, then $\mathcal{M}_2$ admits no fold points.
\end{proposition}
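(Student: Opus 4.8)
The plan is to specialise \propref{zfoldsprop} and the explicit fold-point formula~\eqref{foldloc} to the Koper parameters recorded in~\eqref{kop_params}. Since the number of fold points of $\mathcal{M}_2$ is governed entirely by the sign of the discriminant $\beta^2 f_2^2 - 3\alpha\beta f_3$ through \propref{zfoldsprop}, the first step is to evaluate this quantity. Substituting $\alpha = 1$, $\beta = -2$, $f_2 = 3/|k|$, and $f_3 = -1/|k|$ gives
\begin{align*}
\beta^2 f_2^2 - 3\alpha\beta f_3 = \frac{36}{|k|^2} - \frac{6}{|k|} = \frac{6\lp 6-|k|\rp}{|k|^2},
\end{align*}
whose sign is determined solely by the factor $6 - |k|$. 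Hence the discriminant is positive exactly when $|k| < 6$, i.e.\ for $-6 < k < 0$, and negative exactly when $|k| > 6$, i.e.\ for $k < -6$; \propref{zfoldsprop} then yields two fold points in the former regime and none in the latter, establishing both existence statements.

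For the location claim in the regime $-6 < k < 0$, I would read off the fold-point coordinates from~\eqref{foldloc}, namely $x_p^\mp = 1 \pm \sqrt{1 - |k|/6}$ (which itself follows by substituting the above parameters into the general formula~\eqref{Zfolds}). Setting $s := \sqrt{1 - |k|/6}$, we have $s \in (0,1)$ for $-6 < k < 0$, so that $x_p^+ = 1 - s \in (0,1)$ and $x_p^- = 1 + s \in (1,2)$; in particular $0 < x_p^+ < x_p^- < 2$. Because the fold lines $\mathcal{L}^\mp$ of $\mathcal{M}_1$ are located at $x = 0$ and $x = 2$, and $\mathcal{M}_2$ meets $\mathcal{L}^\mp$ precisely at the folded singularities $q^\mp$ of~\eqref{sings} (at $x = 0$ and $x = 2$, respectively), both fold points lie strictly between these two intersections. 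Equivalently, since the repelling sheet $\mathcal{S}^r$ is characterised by $0 < x < 2$, both fold points lie on $\mathcal{S}^r$. A coordinate-free alternative for this part is to observe that $\alpha\beta = -2 < 0$ and to invoke the first statement of \propref{relfold} directly.

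There is no genuine analytic obstacle here: the argument reduces to a single monotone inequality in the scalar $|k|$ together with the reality and magnitude of one square root. The only place that demands care is the bookkeeping of the $\mp/\pm$ sign conventions in~\eqref{Zfolds} and~\eqref{foldloc} --- specifically, since $3\beta f_3 = 6/|k| > 0$ in the Koper model, the fold point $x_p^-$ corresponds to the $+$ branch of the square root (the larger $x$-value) --- so that the final ordering $0 < x_p^+ < x_p^- < 2$ is recorded with the correct labelling of $x_p^\mp$.
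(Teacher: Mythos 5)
Your proposal is correct and follows essentially the same route as the paper, which states \propref{kopfolds} as an immediate consequence of the explicit fold-point coordinates in \eqref{foldloc}: your discriminant computation $6\lp 6-|k|\rp/|k|^2$ via \propref{zfoldsprop} is just the reality condition for the square root $\sqrt{1-|k|/6}$ appearing there, and your ordering $0<x_p^+<x_p^-<2$ (with the correct sign bookkeeping) is exactly how the location claim is read off against $\mathcal{S}^r=\{0<x<2\}$. The coordinate-free alternative via $\alpha\beta=-2<0$ and \propref{relfold} is a nice cross-check, consistent with the paper's remark that the Koper geometry is always as in panel (c) of \figref{m2bif}.
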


We reiterate that, due to $ k<0 $, the fold points $ p^\mp $ in the Koper model cannot cross $ \mathcal{L}^\mp $, and that the corresponding singular geometry is therefore as depicted in \figref{m2bif}(c).
\begin{remark}
In \cite[Example 4.3]{cardin2017fenichel}, the manifold $ \mathcal{M}_2 $ is characterised as normally hyperbolic everywhere, in spite of its graph being $ S $-shaped. \propref{kopfolds} above shows that $ \mathcal{M}_2 $ can, in fact, admit two fold points at which normal hyperbolicity is lost. 
\end{remark}

\subsection{Classification of three-timescale dynamics}
Here, we classify the dynamics of the Koper model in the three-timescale context for various choices of the parameters $ k$ and  $\lambda $ in Equation~\eqref{koper1}. In particular, we hence construct the
two-parameter bifurcation diagram shown in \figref{kl-plane}; recall \figref{kl-plane0}. (A 
two-timescale analogue of \figref{kl-plane}, for the case of one fast and two slow variables in \eqref{koper1}, is presented in \cite{desroches2012mixed}.) 
Given the definition of $\mu$ in \eqref{kop_params}, we consider $\lambda$ as a function of $k$ here when retracing the analysis from \secref{singpert1}, in particular in relation to the classification in \figref{muplane}; the requisite calculations are simplified due to the symmetry of \eqref{koper1}, by \remref{symmetry}.

\begin{figure}[ht!]
	\centering
	\begin{subfigure}[b]{0.45\textwidth}
	\includegraphics[scale=0.34]{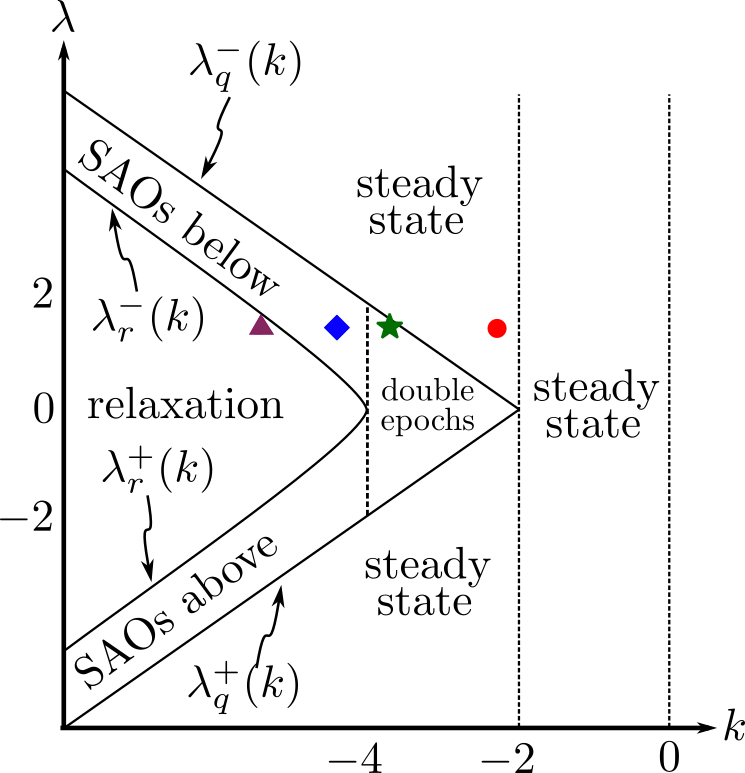}
	\caption{}
	\end{subfigure}
	\begin{subfigure}[b]{0.45\textwidth}
	\includegraphics[scale=0.38]{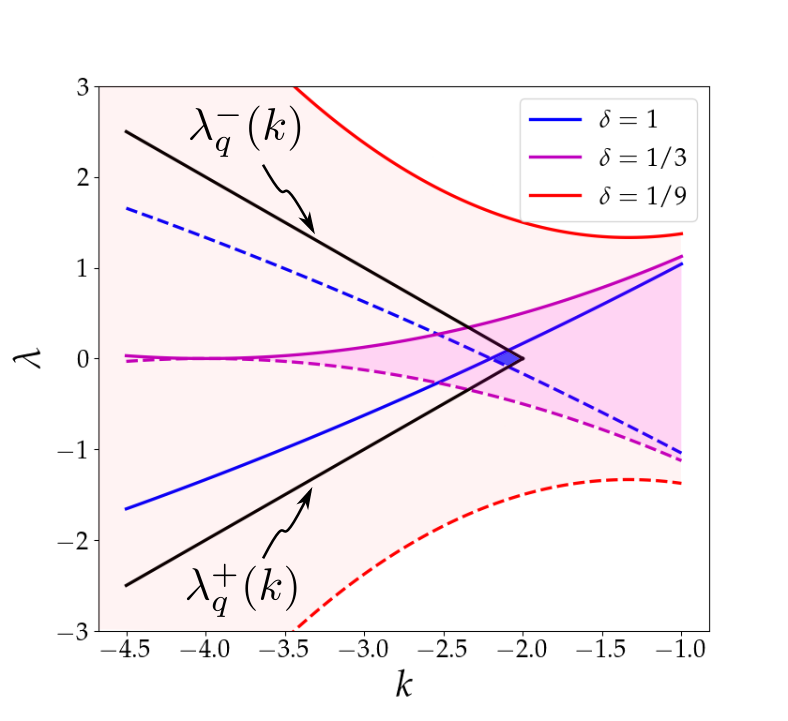}
	\caption{}
	\end{subfigure}
	\caption{(a) Two-parameter bifurcation diagram for the three-timescale Koper model, Equation~\eqref{koper1}: oscillatory dynamics is restricted to the triangular region of the $ (k,\lambda) $-plane that is bounded by $ \lambda_{q}^\mp(k) $; mixed-mode dynamics is separated from relaxation oscillation by the curves $ \lambda_r^\mp(k) $; {to leading order,} the mixed-mode regime is subdivided into regions of either single or double epochs of SAOs at $ k = -4 $. Numerical verification of the cases corresponding to the coloured shapes is given in \figref{relax}. (b) Classification of the folded singularities $q^\mp$ in dependence of $\delta$: dashed and solid curves correspond to $q^-$ and $q^+$, respectively, being of folded degenerate node type. For $\delta$ fixed, shading indicates parameter regimes in the $(k,\lambda)$-plane where both $q^\mp$ are folded nodes.
	%the folded singularity $q^-$, respectively $ q^+ $, is of node type for $ (k,\lambda) $ in the region above the corresponding dashed curve, respectively below the solid curve. 
	With decreasing $ \delta $, these regimes stretch until the curves ``detach'' at $ \delta=\frac13 $; see \cite[Figure~16]{desroches2012mixed} for comparison.}
	\figlab{kl-plane}
\end{figure}

In a first step, we note that the boundary between steady-state behaviour and oscillatory dynamics in the Koper model is marked by curves that are $\mc{O}(\varepsilon,\delta)$-close to the lines given by $\lambda_{q}^\mp(k) = \mp(2+k)$; these are found by making use of \eqref{kop_params1} in \eqref{muqmp}, and solving for $ \lambda $.

It hence follows that oscillatory dynamics is restricted to the triangular area illustrated in \figref{kl-plane}. A further subdivision of that area is obtained by noting that mixed-mode dynamics is separated from relaxation oscillation by two curves $ \lambda_r^-(k) $ and $ \lambda_r^+(k)=-\lambda_r^-(k)$; {these are} found by substituting \eqref{kop_params1} into \eqref{muclose} and solving for $\lambda$. While analytical expressions for $ \lambda_r^\mp(k) $ can be obtained by direct integration, they are quite involved algebraically, and are hence not included here. These expressions imply that, for $ \varepsilon=0=\delta $ and $k<-4$, $\lambda_{q}^+(k)<\lambda_r^+(k) <\lambda_r^-(k)<\lambda_{q}^-(k)$, as well as that $ \lambda_r^\mp $ are asymptotically parallel to $ \lambda_{q}^\mp$, respectively, for $ \left|k\right| $ sufficiently large; moreover, the curves $\lambda_r^\mp(k)$ connect tangentially at $k=-4$. 
%{We remark that the curves $\lambda_r^\mp(k) $ connect tangentially at $ k=-4 $, as one can easily verify by calculating the derivatives of $\lambda_r^\mp$ with respect to $ k $; cf.~\figref{kl-plane}.}
(Numerically, one finds that, for $ \varepsilon = \mathcal{O}(10^{-4}) $ and $ \delta = \mathcal{O}(10^{-2})$, the transition between mixed-mode dynamics and relaxation occurs at $\lambda_r^\mp(k) +\mathcal{O}(\delta)$, as is to be expected from \eqref{muclose}.)

Finally, the resulting, chevron-shaped region in which MMOs are observed is further divided into subregions in which either single or double epochs of SAOs are found; {to leading order in $\varepsilon$ and $\delta$, that division occurs at $k=-4$.} Geometrically, the division is due to the fact that the folded singularities $q^\mp$ in the Koper model are remote  for $ k<-4 $, while they are connected when $ -4<k<0 $. We emphasise that, in the two-timescale context of $\varepsilon$ sufficiently small and $ \delta =\mathcal{O}(1) $, MMOs with double epochs of SAOs occur in a very narrow region of the $(k,\lambda)$-plane, as shown in \figref{kl-plane}(b) for $\delta =1$ (shaded blue). That region corresponds to the regime where both folded singularities $q^\mp$ are of folded node type and trajectories are attracted to both of them through the associated funnels, by \cite{desroches2012mixed}; these funnels stretch as $\delta$ decreases, recall \lemmaref{strong}. Hence, in the three-timescale context, trajectories can reach both folded singularities $q^\mp$ as long as they are attracted to $\mathcal{M}_2$ on both $\mathcal{S}^{a^\mp}$, i.e., as long as $q^-$ and $q^+$ are aligned or connected. 

\begin{remark}
Comparing \figref{muplane} with \figref{kl-plane}, we note that the two panels in the former are combined in the latter, as one-parameter diagrams (in $\mu$) are merged into one two-parameter diagram in $ (k,\lambda) $; correspondingly, parallel lines with $\mu$ constant in \figref{muplane} are ``bent", and hence intersect, in \figref{kl-plane}. (Here, we reiterate that $ k $ determines the singular geometry of the Koper model, while $ \lambda $ affects the resulting flow.)
\end{remark}

\subsection{Numerical verification}

In this subsection, we verify our classification of the three-timescale dynamics of the Koper model for various representative choices of the parameters $k$ and $\lambda$, as indicated in \figref{kl-plane}. We initially fix $ \varepsilon = 0.01 = \delta$ and $ \lambda = 1.5 $, and we vary $ k $. We recall that the Koper model is symmetric in $ \lambda $, and that it hence suffices to consider positive $\lambda$-values; cf.~again \remref{symmetry} and \figref{fareys}.

For $ k=-2.2 $ (red circle), the flow of the Koper model converges to steady state; see panel (a) of \figref{relax}. For $ k=-3.6 $ (green asterisk), we observe mixed-mode dynamics with double epochs of SAOs, since the folded singularities $q^\mp$ are connected in that regime; the points at which these trajectories ``jump" are estimated in \propref{nodesc} of Appendix~\ref{mechs}. We note that the dynamics on $ \mathcal{Z}^{-} $ differs from that on $ \mc{Z}^{+} $ due to the definition of $ \phi(x,y,z) $ as given in \eqref{kop_params}, in spite of the singular geometry being symmetric; see \figref{relax}(c). For $ k=-4.4$ (blue diamond), the Koper model exhibits mixed-mode dynamics with single epochs of SAOs, as illustrated in panel (e) of \figref{relax}. Finally, for $ k=-5.4 $ (purple triangle), we observe relaxation oscillation; see \figref{relax}(c).

\begin{figure}[ht!]
	\centering
	\begin{subfigure}[b]{0.3\textwidth}
		\centering
		\includegraphics[scale = 0.4]{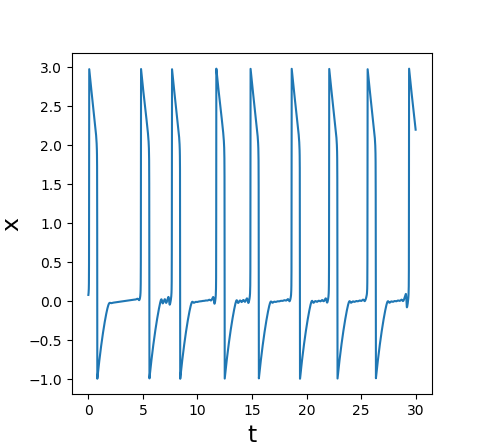}
		\caption{$ \delta  = 0.1 = \mathcal{O}(\sqrt{\varepsilon})$.}
	\end{subfigure}
	~
	\begin{subfigure}[b]{0.3\textwidth}
		\centering
		\includegraphics[scale = 0.4]{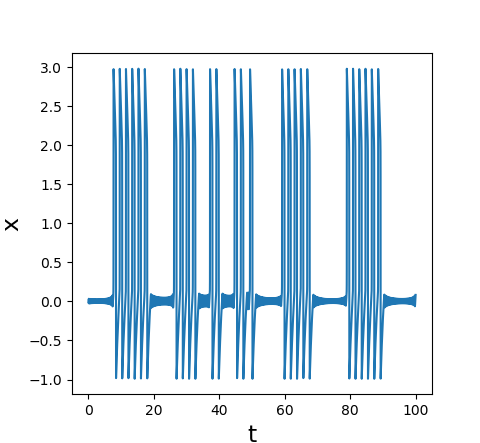}
		\caption{$ \delta  = 0.001 = \mathcal{O}({\varepsilon}^{\frac{3}{2}})$.}
	\end{subfigure}
	~
	\begin{subfigure}[b]{0.3\textwidth}
		\centering
		\includegraphics[scale = 0.4]{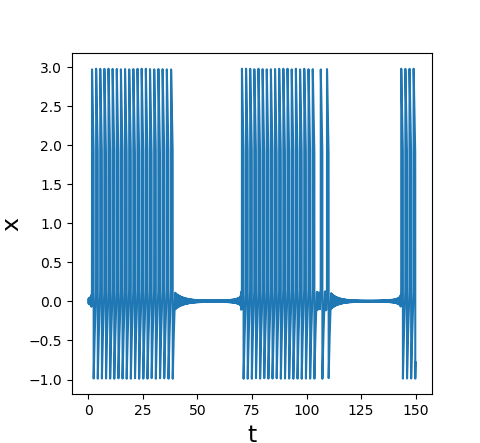}
		\caption{$ \delta  = 0.0003 = \mathcal{O}({\varepsilon}^2)$.}
	\end{subfigure}
	\caption{Mixed-mode time series in the Koper model for $\varepsilon=0.01$ fixed and varying $\delta$: as $ \delta $ decreases, the number of LAOs between SAO segments typically increases; additionally, for these particular parameter values, the model seems to exhibit sector-delayed-Hopf-type dynamics \cite{de2016sector}, as is particularly apparent in panel (b).}
	\figlab{sechof}
\end{figure}

\begin{figure}[h!t]
	\centering
	\includegraphics[scale=0.6]{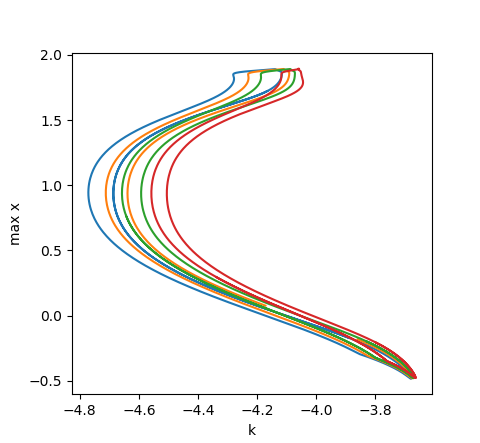}
	\caption{Numerical continuation of periodic orbits in the Koper model with \texttt{auto-07p} \cite{doedel2007auto} for $ \lambda = 1.5 $ and $ \varepsilon = 0.1 = \delta $: one observes coexistence of multiple periodic orbits, as evidenced by the overlap between the corresponding $k$-intervals.}
	\figlab{auto}
\end{figure}

It was shown in \cite{de2016sector} that for $ \delta = \mathcal{O}(\varepsilon^2) $, their prototypical model, Equation~\eqref{prototypical}, can admit MMO trajectories which contain SAO segments that are the product of bifurcation delay alternating with sector-type dynamics. In \figref{sechof}, we present an example that indicates sector-delayed-Hopf-type dynamics in the Koper model; as indicated in \secref{summary}, a crude requirement for the existence of such mixed dynamics is that $ \delta = \mathcal{O}(\varepsilon^c) $ for $ c\geq1 $. We remark that sector-type SAOs cease to exist when $ k=-4 $, as the corresponding regions on $\mathcal{Z}^\mp$ vanish then, %the canard point $p_{CN}^-$ coalesces with $p_{DH}^-$  and that the interval $\mathcal{I}_{\rm can}$ hence vanishes, 
which follows by substitution of \eqref{kop_params} into \eqref{zcd} below; see Appendix~\ref{mechs} for details. 
%Since, in addition, $ z_{CD} =\mathcal{O}(\sqrt{\varepsilon})$, a crude requirement for the existence of such mixed dynamics is that $ \delta = \mathcal{O}(|z_{CN}|^c) $ for $ c\geq1 $. 

We emphasise again that the MMO trajectories described here cannot be viewed, strictly speaking, as perturbations of individual singular cycles, as defined in \secref{singgeom}. Rather, we have shown that if the folded singularities of \eqref{normal} are remote, then there exist $ \varepsilon$ and $\delta$ positive and sufficiently small such that the Koper model exhibits MMOs with single epochs of SAOs; correspondingly, we observe double epochs of SAOs if those singularities are aligned or connected. The above statement is corroborated by numerical continuation, as illustrated in \figref{auto}, where multiple periodic orbits seem to coexist for $ k $, $ \lambda $, $ \varepsilon $, and $ \delta $ fixed. (A similar observation was made in the context of the two-timescale Koper model, i.e., for $\delta =1$ in Equation~\eqref{koper1-c} \cite[Figure~19]{desroches2012mixed}.) An in-depth study of the properties of these periodic orbits in relation to the mixed-mode dynamics of Equation~\eqref{normal} is left for future work. 

\begin{figure}[p!ht]
	\centering
	\begin{subfigure}[b]{0.48\textwidth}
		\centering
		\includegraphics[scale = 0.42]{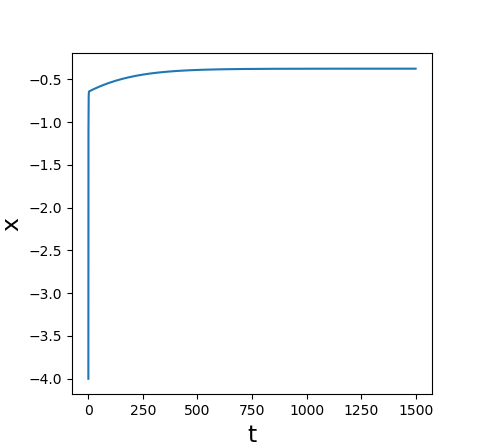}
		\caption{$k=-2.2$.}
	\end{subfigure}
	~
	\begin{subfigure}[b]{0.48\textwidth}
		\centering
		\includegraphics[scale = 0.14]{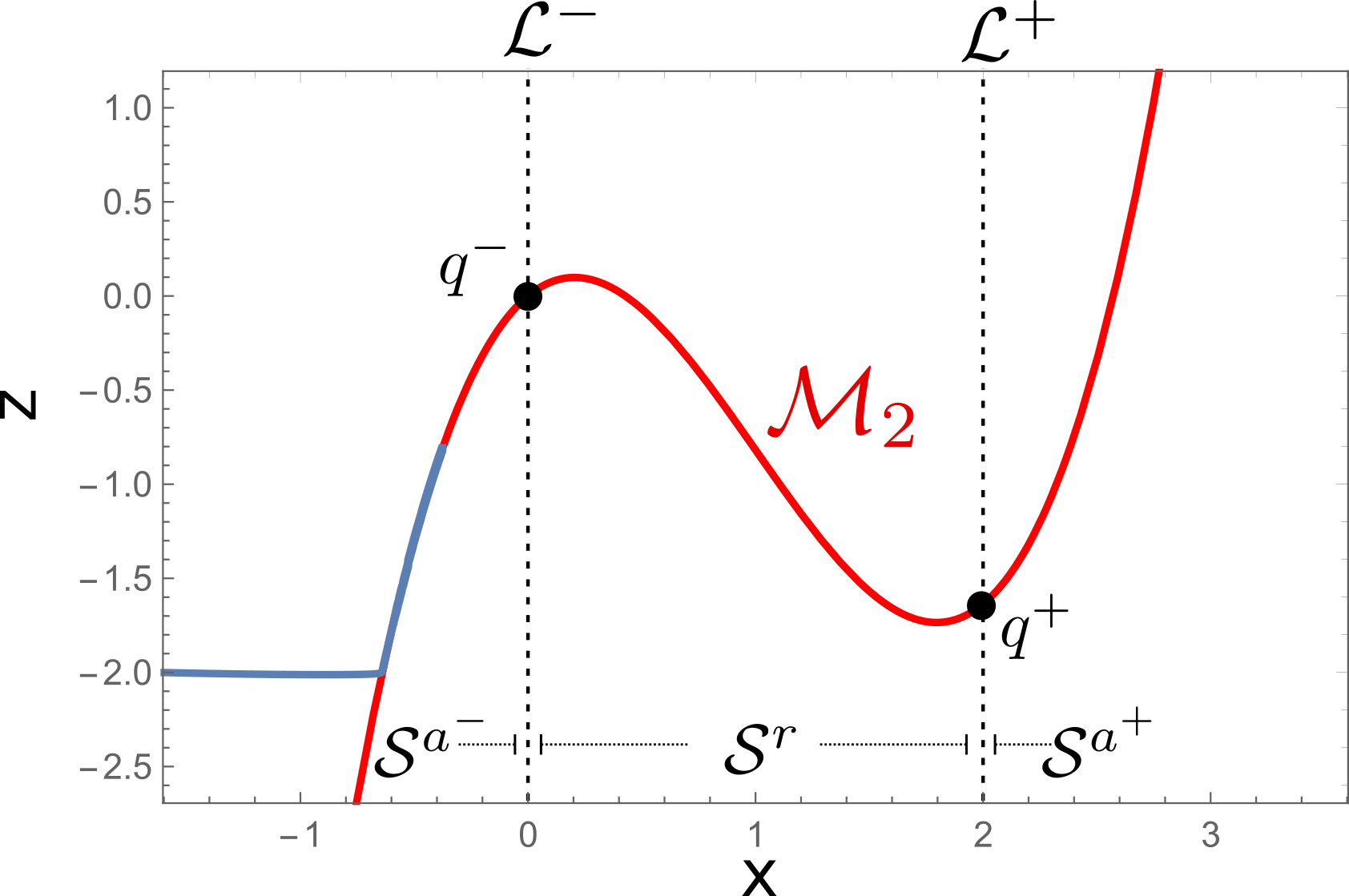}
		\caption{$k=-2.2$.}
	\end{subfigure}
	\\
	\begin{subfigure}[b]{0.48\textwidth}
		\centering
		\includegraphics[scale = 0.42]{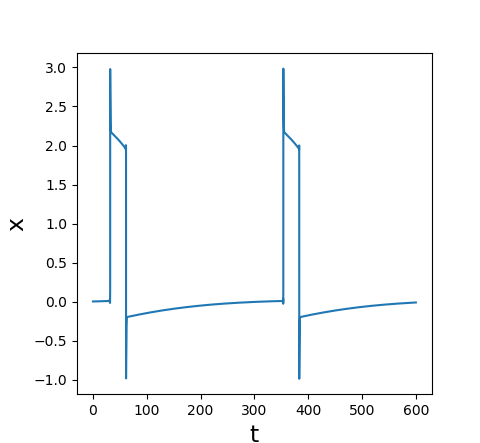}
		\caption{$k=-3.6$.}
	\end{subfigure}
	~
	\begin{subfigure}[b]{0.48\textwidth}
		\centering
		\includegraphics[scale = 0.14]{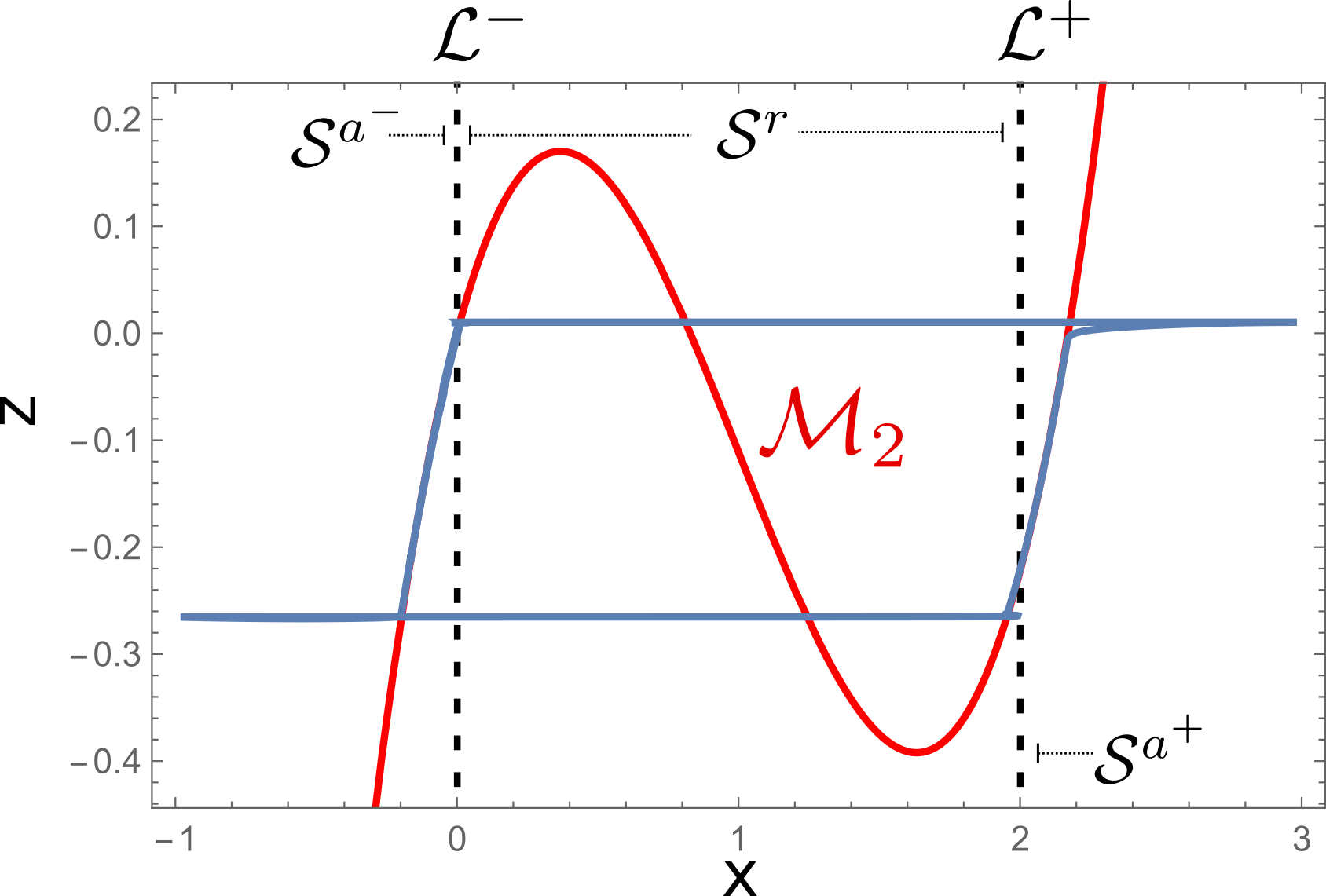}
		\caption{$k=-3.6$.}
	\end{subfigure}
	\\
	\begin{subfigure}[b]{0.48\textwidth}
		\centering
		\includegraphics[scale = 0.42]{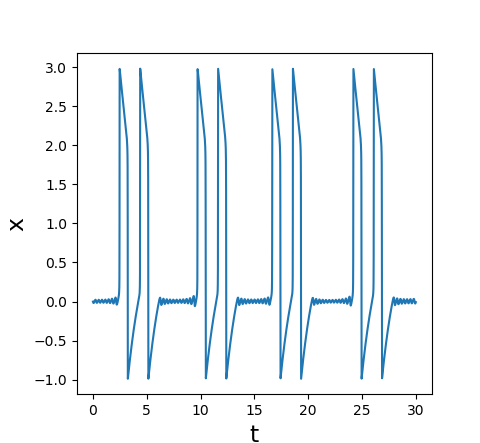}
		\caption{$k=-4.4$.}
	\end{subfigure}
	~
	\begin{subfigure}[b]{0.48\textwidth}
		\centering
		\includegraphics[scale = 0.14]{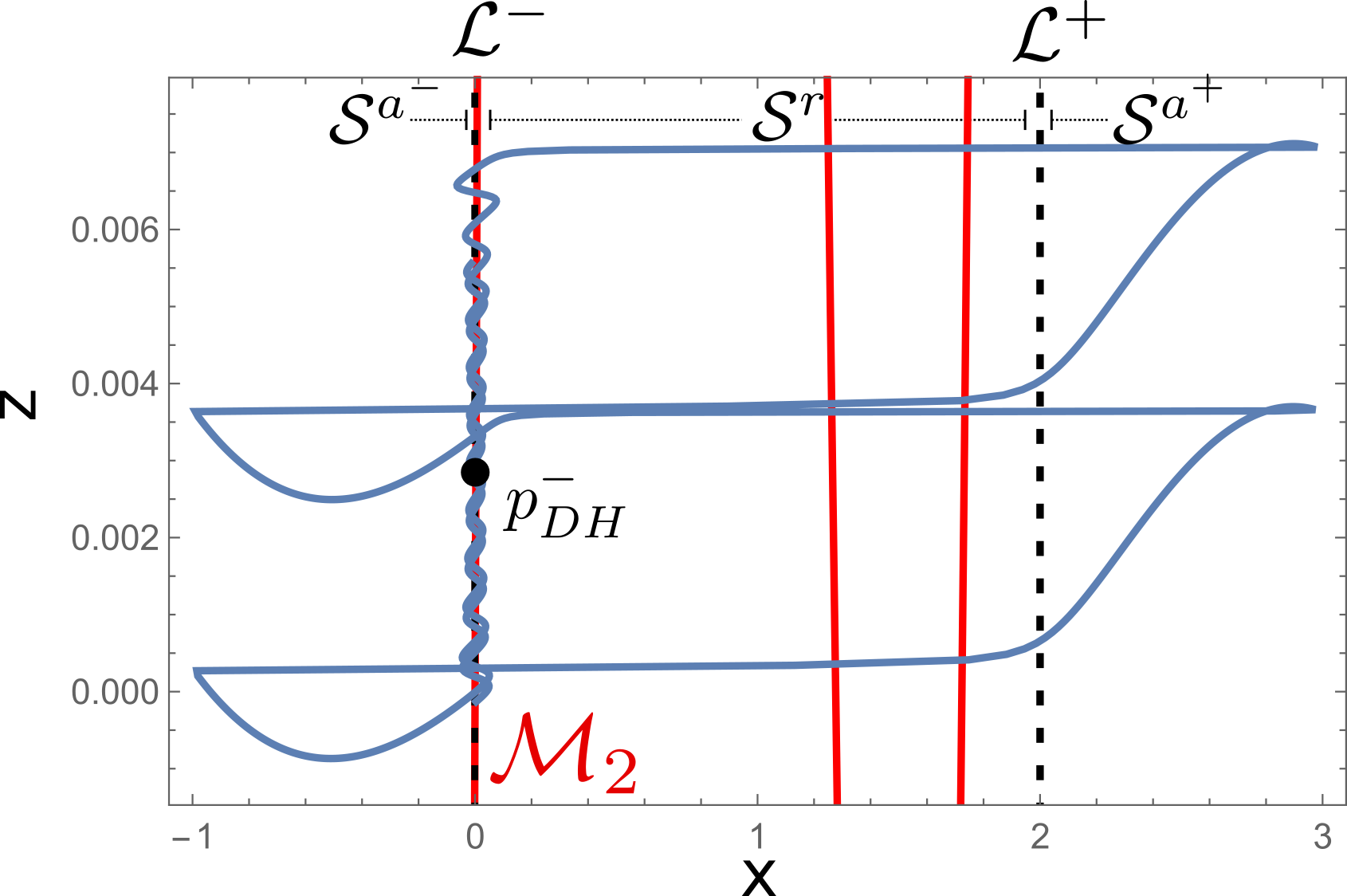}
		\caption{$k=-4.4$.}
	\end{subfigure}
	\\
	\begin{subfigure}[b]{0.48\textwidth}
		\centering
		\includegraphics[scale = 0.42]{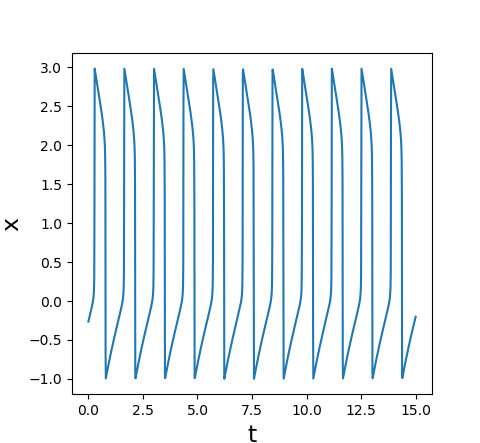}
		\caption{$k=-5.4$.}
	\end{subfigure}
	~
	\begin{subfigure}[b]{0.48\textwidth}
		\centering
		\includegraphics[scale = 0.14]{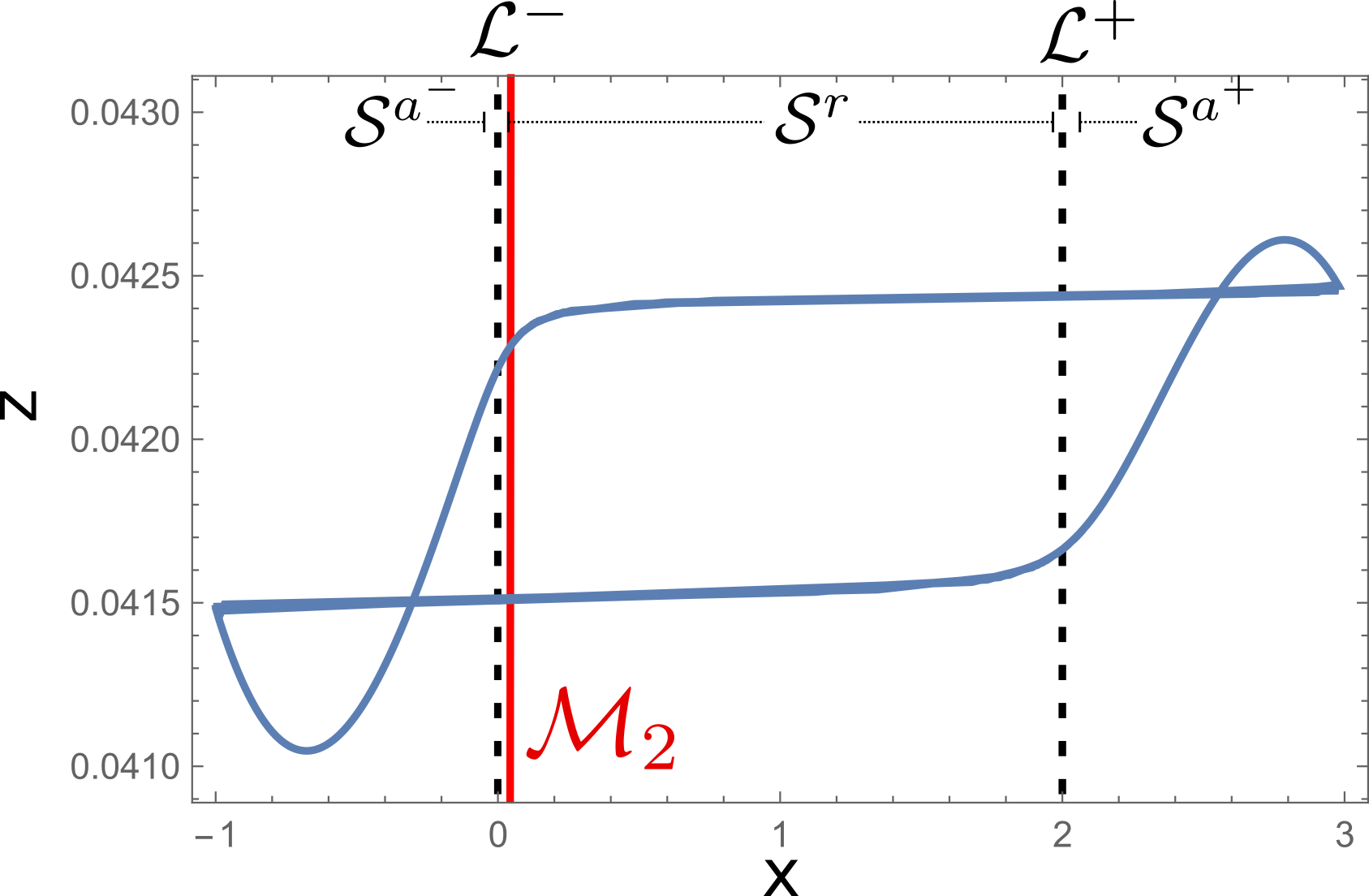}
		\caption{$k=-5.4$.}
	\end{subfigure}
	\caption{Verification of the bifurcation diagram in \figref{kl-plane} for representative choices of $k$, with $\lambda=1.5$ and $\varepsilon=0.01=\delta$ fixed: as $k$ decreases, one observes a transition from (a) steady-state behaviour via (c) MMO trajectories with double epochs of SAOs and (e) single epochs of SAOs to (g) relaxation oscillation. The corresponding singular geometry in phase space is shown in panels (b), (d), (f), and (h), respectively.}
	\figlab{relax}
\end{figure}

\section{Conclusions}
\seclab{conclusion}
In the present article, we have introduced an extended prototypical example of a three-dimensional, three-timescale system, Equation~\eqref{normal}. We have classified the mixed-mode dynamics of that system in dependence of its parameters, thus relating bifurcations of MMO trajectories to the underlying singular geometry. In particular, in \secref{singpert1}, we identified the geometric mechanism that is responsible for the transition from MMOs with single epochs of SAOs to those with double epochs, and we argued that the latter are robust in the three-timescale context. Specifically, we showed that, if the folded singularities of \eqref{normal} are remote, then there exist $\varepsilon$ and $\delta$ sufficiently small such that our system exhibits either MMOs with single epochs of SAOs or two-timescale relaxation oscillation, whereas double epochs of SAOs can be observed if the singularities are aligned or connected; cf.~\propref{relatives}. In \secref{koper}, we demonstrated our results for the Koper model from chemical
kinetics \cite{koper1995bifurcations}, which represents one particular realisation of \eqref{normal}; in particular, we constructed the two-parameter bifurcation diagram in \figref{kl-plane} on the basis of results obtained in \secref{singpert1}, thus classifying in detail the mixed-mode dynamics of the three-timescale Koper model. 
%Then, in \secref{HH}, we indicated how some of our findings extend to a three-dimensional reduction of the Hodgkin-Huxley equations that were derived by Rubin and Wechselberger \cite{rubin2007giant}, showing that this mechanism is more generic and applies to more complicated systems with similar geometric properties.

A~posteriori, it is evident that the local dynamics of our extended prototypical model, Equation~\eqref{normal}, is similar to that of the canonical system, Equation~\eqref{canonic}, proposed in \cite{letson2017analysis}; however, due to the absence of a cubic $x$-term in \eqref{canonic-a}, the latter can yield SAO-type dynamics only due to the lack of an LAO-generating global return mechanism. The prototypical system in Equation~\eqref{prototypical}, on the other hand, does not allow for the supercritical manifold to be cubic-like due to the $y$-term being absent in \eqref{prototypical-b} and can hence only exhibit MMOs with single epochs of SAOs, as opposed to our extended Equation~\eqref{normal}; recall \propref{relatives}. Hence, we postulate that the extended prototypical model in Equation~\eqref{normal} represents the simplest general example within that given class of systems that can encapsulate the geometric mechanism described in \secref{singpert1}.
Additionally, we remark that the singular geometry considered here is relatively specific due to its symmetry properties. In particular, we have not considered explicitly the scenario where the fold points $p^\mp$ of $\mathcal{M}_2$ can cross the fold lines $\mathcal{L}^\mp$ of $\mathcal{M}_1$; recall, in particular, panels (b) and (e) of \figref{m2bif}. While that scenario is not realised in the Koper model, Equation~\eqref{koper1}, it has been shown to give rise to interesting local dynamics through the interaction of $p^\mp$ with $\mathcal{L}^\mp$; a recent, relevant example can be found in \cite{desroches2018spike}.

Our analysis in \secref{singpert1} shows that, in parameter regimes where both $ \mathcal{M}_1 $ and $ \mathcal{M}_2 $ are normally hyperbolic, standard GSPT \cite{fenichel1979geometric} implies that an iterative reduction of timescales can be applied. In the fully perturbed Equation~\eqref{normal} with $\varepsilon$ and $\delta$ sufficiently small, it follows that the manifolds $ \mathcal{Z}_{\varepsilon\delta}^{\mp,r} $ lie $ \mathcal{O}(\delta) $-close to their unperturbed counterparts $ \mathcal{Z}^{\mp,r}$, respectively, since $ \mathcal{Z}^{\mp,r}_{\varepsilon0}$ are $\varepsilon$-independent. Since, moreover, the manifolds $ \mathcal{S}_{\varepsilon0}^{a,r} $ lie $ \mathcal{O}(\varepsilon) $-close to $ \mathcal{S}^{a,r}$ \cite{fenichel1979geometric}, any fibers of $\mathcal{Z}_{\varepsilon\delta}^{\mp,r}$ that lie on $\mathcal{S}_{\varepsilon\delta}^{a,r}$ are $ \mathcal{O}(\varepsilon+\delta) $-close to $ \mathcal{S}^{a,r}$. (That estimate is in disagreement with \cite{cardin2017fenichel}; however, we note that, away from $\mathcal{Z}_{\varepsilon\delta}^{\mp,r}$, $\mathcal{S}_{\varepsilon\delta}^{a,r}$ are $\mathcal{O}(\varepsilon)$-close to $ \mathcal{S}^{a,r}$.) Under \asuref{redflow}, trajectories that are attracted to $ \mathcal{Z}_{\varepsilon\delta}^{\mp,r} $ follow the slow flow of \eqref{classy-des} and potentially undergo SAOs. In the context of \eqref{normal}, the mechanisms that generate these SAOs are ``bifurcation delay'' \cite{krupa2010local,de2016sector,letson2017analysis} and ``sector-type'' dynamics \cite{krupa2008mixed,de2016sector}; see Appendix~\ref{mechs} for details.
%, as described in \cite{cardin2017fenichel}. However, w
%We make the following claim on the distance between the perturbed slow and superslow manifolds $ \mathcal{S}_{\varepsilon\delta} $ and $ \mathcal{Z}_{\varepsilon\delta} $, respectively:
%\begin{claim}
%Let $ 0<\varepsilon,\delta\ll1 $. Then, $\mathcal{Z_{\varepsilon\delta}} $ lies $ \mathcal{O}(\delta) $-close to $ \mathcal{Z} $ in the Hausdorff distance, while $ \mathcal{S_{\varepsilon\delta}} $ lies $ \mathcal{O}(\varepsilon+\delta) $-close to $ \mathcal{S} $.
%\end{claim}
%{The above claim can be extended to $ n $-dimensional systems with $ n $ distinct timescales and $ n-1 $ normally hyperbolic critical manifolds $ \mathcal{S}^1\supset \mathcal{S}^2\supset\cdots\supset \mathcal{S}^{n-1}$.
%, as described in \cite{cardin2017fenichel}. 
%Denoting by $ \varepsilon = (\varepsilon_1,\ldots,\varepsilon_{n-1}) $ the vector that consists of the corresponding scale separation parameters, we find that $ \mathcal{S}^j_\varepsilon $ lies $ \mathcal{O}(\sum_{m=j}^{n-1}\varepsilon_m) $-close to $ \mathcal{S}^j $ ($ j = 1,\ldots,n-1 $) with respect to the Hausdorff distance, which is in disagreement with \cite{cardin2017fenichel}.}

With regard to regions where normal hyperbolicity of $ \mathcal{M}_1 $ is lost, we reiterate that the dynamics of Equation~\eqref{normal} combines features of two-timescale slow-fast systems with either two slow variables and a fast one, or one fast variable and two slow ones. As shown in \secref{singpert1}, the corresponding mechanisms hence coexist and interact, giving rise to complex local dynamics in the vicinity of the fold lines $ \mathcal{L}^\mp $ in \eqref{normal}. We briefly sketched the implications of that interaction; in particular, we related the emergence of canard-type SAOs to the perturbation of an integrable system \cite{krupa2008mixed}. A more rigorous description of the resulting near-integrable system in the context of the Koper model, Equation~\eqref{koper1}, is part of work in progress. Of particular interest here is the investigation of Shilnikov-type homoclinic phenomena, as well as the further classification of MMOs with single epochs of SAOs; specifically, we conjecture that the bifurcation diagram in \figref{kl-plane} may be refined, in that one can identify regions of chaotic mixed-mode dynamics in dependence of the various parameters in the model, as well as of the ratio of $ \varepsilon $ and $ \delta $.

We emphasise that, strictly speaking, the MMO trajectories described in \secref{singpert1} cannot be considered as perturbations, for $ \varepsilon$ and $\delta$ positive, of the individual singular cycles constructed in \secref{singgeom}. Rather, the latter determine the qualitative properties of the former, for $\varepsilon$ and $\delta$ sufficiently small, as is evident from \figref{auto} in the context of the three-scale Koper model, where several periodic orbits seem to coexist for a given choice of $k$, $\lambda$, $\varepsilon$, and $\delta$. %Similarly, in the reduced Hodgkin-Huxley model, Equation~\eqref{hh_3ts}, exotic Farey sequences are observed, in addition to the ones illustrated in \secref{koper}, which again attests to the fact that one cannot consider MMO trajectories as perturbations of individual cycles. 

Finally, we emphasise that the geometric mechanism described in this article extends beyond the Koper model from chemical kinetics studied in \secref{koper}. One prominent example of a rich multiple-scale system that features similar geometric properties as our prototypical model, Equation~\eqref{normal}, is provided by a three-dimensional reduction of the famous Hodgkin-Huxley equations from mathematical neuroscience \cite{rubin2007giant},
\begin{subequations}\eqlab{hh}
\begin{align}
\epsilon\dot{v} &= \bar{I}-\lp v-\bar{E}_{Na}\rp m_\infty(v)^3h-\bar{g}_k\lp v-\bar{E}_k	\rp n^4-\bar{g}_l\lp v-\bar{E}_L\rp, \eqlab{hh-v}\\ 
\dot{h}&= \frac{1}{\tau_h{t}_h\lp v\rp} \lp h_\infty \lp v\rp - h\rp\\
\dot{n}&= \frac{1}{\tau_n{t}_n\lp v\rp} \lp n_\infty \lp v\rp - n\rp, \eqlab{hh-n}
\end{align}
\end{subequations}
where $v$ is the fast variable and $(h,n)$ are the slow ones.
Here, the functions $\frac1{t_h(v)}$, $\frac1{t_n(v)}$, and $x_\infty(v)$ ($x=m,h,n$) illustrated in \figref{xinf} are defined as in \cite{rubin2007giant}, as are the values of the various parameters in Equation~\eqref{hh}; see also \cite{doi2001complex}.
%we have written
%\begin{align*}
%t_x\lp v\rp = \frac{1}{\alpha_x\lp v\rp+\beta_x\lp v\rp}\quad\text{and}\quad x_\infty \lp v\rp= \frac{\alpha_x\lp v\rp}{\alpha_x\lp v\rp+\beta_x\lp v\rp}\quad\text{for }x = m,h,n,
%\end{align*}
%where the functions $\alpha_x(v)$ and $\beta_x(v)$ ($x=m,h,n$) are defined as
%\begin{gather*}
%\alpha_m\lp v\rp = \frac{\lp v+40\rp/10}{1-{\rm e}^{-\lp v+40\rp/10}}, \quad \alpha_h\lp v\rp=\frac{7}{100}{\rm e}^{-\lp v+65\rp/20}, \quad \alpha_n\lp v\rp = \frac{\lp v+55\rp/100}{1-e^{-\lp v+55\rp/10}},\\
%\beta_m\lp v\rp = 4e^{-\lp v+65\rp/18}, \quad \beta_h\lp v\rp = \frac{1}{1+e^{-\lp v+35\rp/10}}, \quad\text{and}\quad\beta_n\lp v\rp=\frac{1}{4}e^{-\lp v+65\rp/80}.
%\end{gather*}
%Moreover, the corresponding parameter values in \eqref{hh} are
%\begin{gather*}
%\bar{I} = \frac Ik, \quad \bar{g}_k=0.3, \quad \bar{g}_l = 0.0025, \\
%\bar{E}_{Na} = 0.5, \quad \bar{E}_k = -0.77, \quad \bar{E}_L = -0.544, \quad\text{and}\quad \epsilon = 0.0083,
%\end{gather*}
%where $ I $ is the applied current in $ \mu %\tn{A}/\tn{cm}^2 $ in the original Hodgkin-Huxley %equations \cite{doi2001complex}, while
%\begin{gather*}
%k = \lp 120~\tn{mS}/\tn{cm}^2\rp k_v,\quad\text{with}\quad k_v = 100~\tn{mV}.
%\end{gather*}
In particular, following \cite{doi2001complex}, we may set $\tau_n=1$ in \eqref{hh} and assume that $\tau_h\gg1$ is sufficiently large; alternatively, we may take $\tau_h=1$ and  $\tau_n\gg 1$. In either case, we obtain a three-timescale system,
%\begin{subequations}\eqlab{hh_3ts}
%\begin{align}
%\varepsilon\dot{v} &= \bar{I}-\lp v-\bar{E}_{Na}\rp m^3_{\infty}(v)h-\bar{g}_k\lp v-\bar{E}_k	\rp n^4-\bar{g}_l\lp v-\bar{E}_L\rp, \\ 
%\dot{n}&= \frac{1}{t_n\lp v\rp} \lp n_\infty \lp v\rp - n\rp\\
%\dot{h}&= \delta\frac{1}{t_h\lp v\rp} \lp h_\infty \lp v\rp - h\rp,
%\end{align}
%\end{subequations}
where $v$ is the fast variable, with $n$ and $h$ being intermediate or slow, respectively.

\begin{figure}[ht!]
	\centering
	\begin{subfigure}[b]{0.45\textwidth}
		\centering
		\includegraphics[scale = 0.35]{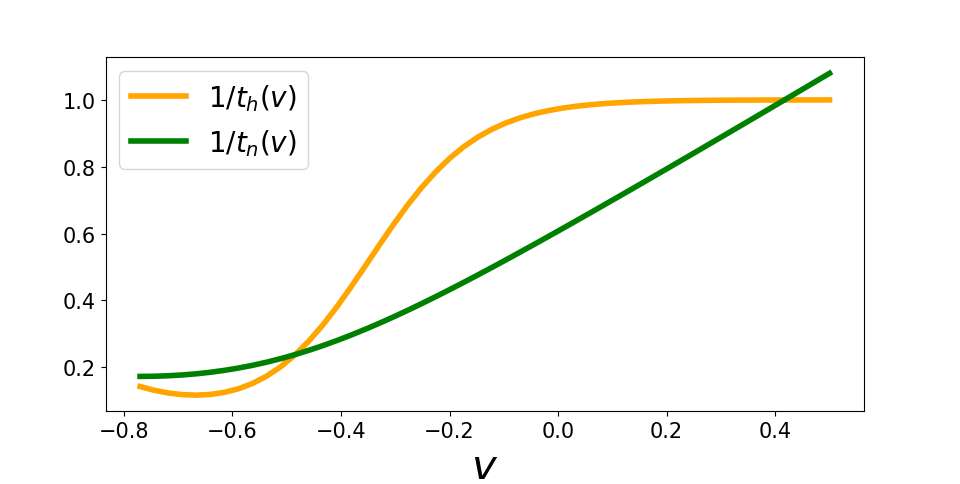}
		\caption{$\frac1{t_h(v)}$ and $\frac1{t_n(v)}$.}
	\end{subfigure}
	~
	\begin{subfigure}[b]{0.45\textwidth}
		\centering
		\includegraphics[scale = 0.35]{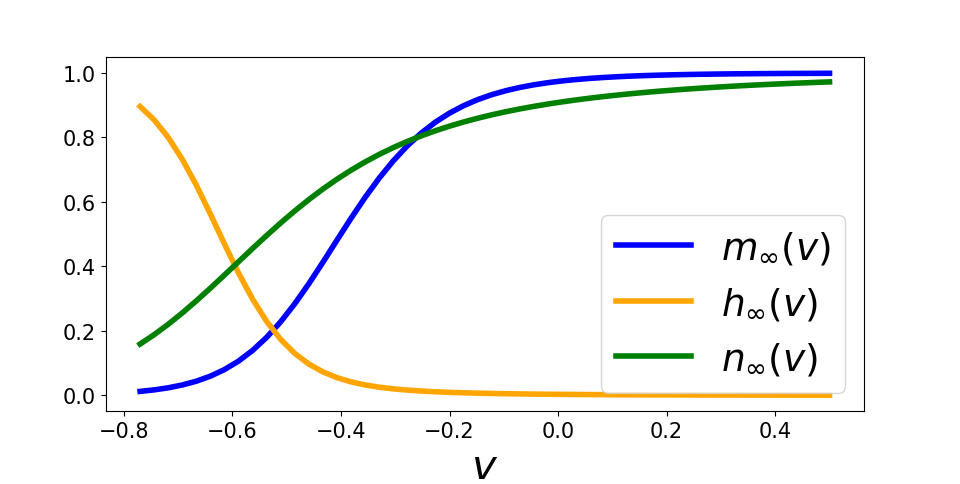}
		\caption{$x_\infty(v)$ ($x=m,h,n$).}
	\end{subfigure}
	\caption{Graphs of the nonlinear functions on the right-hand sides of the Hodgkin-Huxley equations in \eqref{hh}.}
	\figlab{xinf}
\end{figure}

\begin{figure}[ht!]
	\centering
	\begin{subfigure}[b]{0.45\textwidth}
		\centering
		\includegraphics[scale = 0.4]{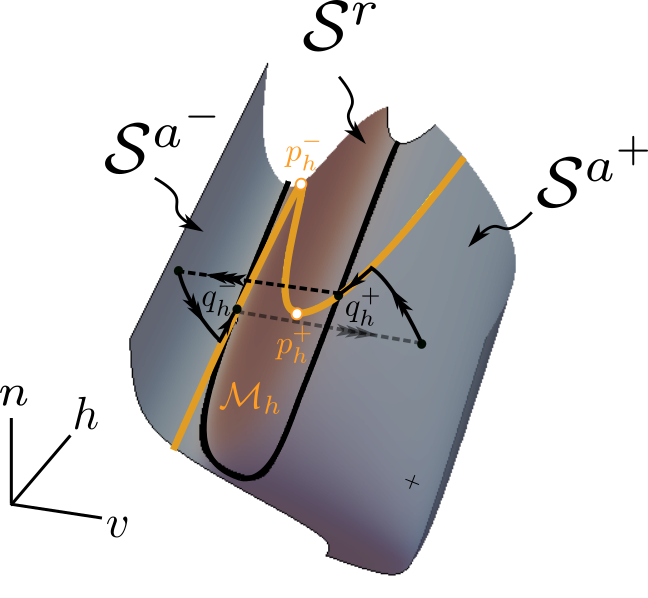}
		\caption{$\tau_h\gg 1$ and $\tau_n=1$.}
	\end{subfigure}
	~
	\begin{subfigure}[b]{0.45\textwidth}
		\centering
		\includegraphics[scale = 0.4]{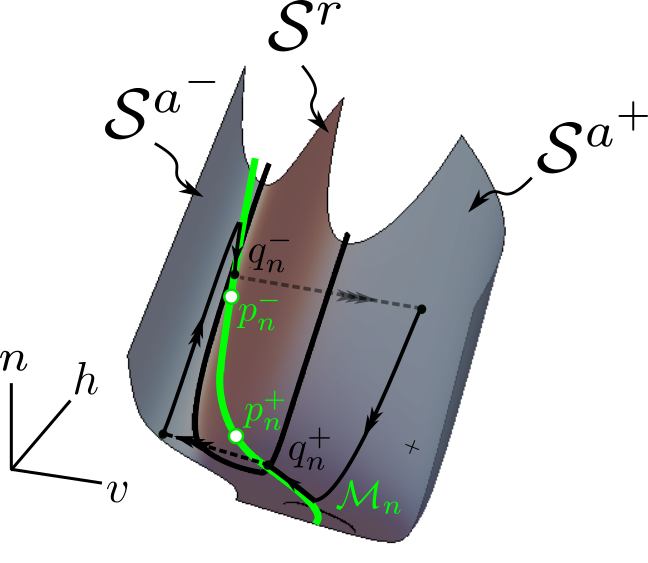}
		\caption{$\tau_h=1$ and $\tau_n\gg 1$.}
	\end{subfigure}
	\caption{The critical and supercritical manifolds of the three-dimensional, three-timescale Hodgkin-Huxley model, Equation~\eqref{hh}, when either $h$ or $n$ is taken to be the slowest variable; see panels (a) and (b), respectively.}
	\figlab{crimaHH}
\end{figure}

\figref{crimaHH} indicates that the resulting singular geometry of Equation~\eqref{hh} is analogous to that of our extended prototypical example, Equation~\eqref{normal}; recall \figref{singcycle}. MMO trajectories can hence again be constructed as outlined in \secref{singpert1}, by combining segments that evolve on different timescales. Upon variation of the parameter $\bar I$ -- the (rescaled) applied current in the Hodgkin-Huxley formalism -- transitions between MMOs with different qualitative properties occur via a mechanism that is similar to the one described for Equation~\eqref{normal} in \secref{singgeom} and \secref{singpert1}. 
%\qs{Finally, the analysis of the case where $ \tau_h=1 $ and $ \tau_n\gg1 $ in \eqref{hh} -- i.e., where $ h $ is the intermediate variable and $ n $ is the slow variable, while $ v $ is still fast -- was also considered in \cite{doi2001complex} and is similar to the h-slow case.} 
For an in-depth geometric analysis of a novel, global three-dimensional reduction of the multiple-timescale Hodgkin-Huxley equations, rather than of Equation~\eqref{hh}, the reader is referred to the upcoming article \cite{hhgspt2019}.

\section*{Acknowledgements}

The authors thank Martin Krupa and Martin Wechselberger for their critical reading of previous versions of the manuscript and for constructive feedback, as well as for insightful discussions and relevant references.

PK was supported by the Principal's Career Development Scholarship for PhD Studies of the University of Edinburgh.

\appendix

\section{SAO-generating mechanisms}
\numberwithin{equation}{section}
\setcounter{equation}{0}
\label{mechs}

In this appendix, we briefly discuss the local, SAO-type dynamics of our prototypical model, Equation~\eqref{normal}; specifically, we give an overview of two SAO-generating mechanisms -- bifurcation delay and sector-type dynamics -- within the framework of \eqref{normal}.

\subsection{Local dynamics and SAOs}\seclab{local}

We begin by discussing the emergence of SAOs in a vicinity of $ \mathcal{L}^\mp $ in \eqref{normal} when trajectories are attracted to $ \mc{Z}^{\mp} $, respectively; we focus on describing the properties of $ \mathcal{Z}^{-} $ close to $ \mathcal{L}^- $ here, as the description of $ \mc{Z}^{+} $ near $ \mathcal{L}^+ $ is analogous.

We first consider the partially perturbed fast Equation~\eqref{norm12fast} with $\varepsilon$ sufficiently small and $ \delta =0 $:
\begin{subequations}\eqlab{fastsub}
	\begin{align}
	{x}' &= -y + f_2x^2+f_3x^3, \\
	{y}' &= \varepsilon \lp \alpha x+\beta y{-z}\rp, \\
	{z}' &=  0.
	\end{align}
\end{subequations}
%\textcolor{red}{We will refer to \eqref{fastsub} as the fast subsystems along $\mathcal{M}_2$}. 
%In other words, instead of the unfolding in $z$ for $\delta >0$ defined by \eqref{norm12fast}, we now take $ \delta = 0 $ and $z$ to be fixed. 
%By standard GSPT \cite{fenichel1979geometric,krupa2001extending}, we can then define the slow manifold $ \mathcal{S}_{\varepsilon0}^{a,r} $ for \eqref{norm12fast} {with $ \delta=0 $} as the union of the slow manifolds of \eqref{fastsub} over an appropriately defined $z$-interval. \textcolor{blue}{(In other words, $ \mathcal{S}_{\varepsilon0}^{a,r}$ are surfaces that are foliated by orbits within $\{z=z_0\}$, with $z_0$ constant.)}
By standard GSPT \cite{fenichel1979geometric,krupa2001extending}, we can define slow manifolds $ \mathcal{S}_{\varepsilon0}^{a,r} $ for \eqref{fastsub} as surfaces that are foliated by orbits within $\{z=z_0\}$, with $z_0$ constant. Since the steady states of \eqref{fastsub} correspond to portions of the supercritical manifold $ \mathcal{M}_2 $, it follows that $ \mathcal{Z}^{\mp,r}_{\varepsilon0} \equiv \mathcal{Z}^{\mp,r}$, i.e., that the geometry of $ \mathcal{Z}^{\mp,r}_{\varepsilon0} $ is, in fact, $\varepsilon$-independent. However, since it will become apparent that the stability properties of $ \mathcal{Z}^{\mp,r}_{\varepsilon0} $ do depend on $ \varepsilon $, we will not suppress the $ \varepsilon $-subscript in our notation.

%The normally hyperbolic portions $ \mathcal{Z}^{\mp,r} $ of $ \mathcal{M}_2 $ do not perturb for $ \varepsilon >0$, i.e., $ \mathcal{Z}_\varepsilon^{a,r} \equiv \mathcal{Z}^{\mp,r}$ for any such $\varepsilon$. \textcolor{blue}{Is that true? Above, we introduce $ \mathcal{Z}^{a^\pm}_{\varepsilon,\delta} $!}
%\textcolor{red}{By standard geometric singular perturbation theory \cite{fenichel1979geometric}, Equation~\eqref{normal} admits invariant manifolds $ \mathcal{S}^{a^\mp}_\varepsilon $ and $ \mathcal{S}^{r}_\varepsilon $, for $\varepsilon$ sufficiently small. These manifolds lie $ \mathcal{O}(\varepsilon) $-close to the normally hyperbolic sheets $ \mathcal{S}^{a^\mp} $ and $ \mathcal{S}^{r} $ of $\mathcal{M}_1$, respectively, and have the same stability properties as them. Finally, the slow dynamics on $ \mathcal{S}^{a^\mp}_\varepsilon $ and $ \mathcal{S}^{r}_\varepsilon $ is a regular perturbation of the corresponding reduced flow (in $\varepsilon$) on $ \mathcal{S}^{a^\mp} $ and $ \mathcal{S}^{r} $, respectively.}\footnote{PK: where does this go}
\begin{figure}[ht!]
\centering
\includegraphics[scale=0.15]{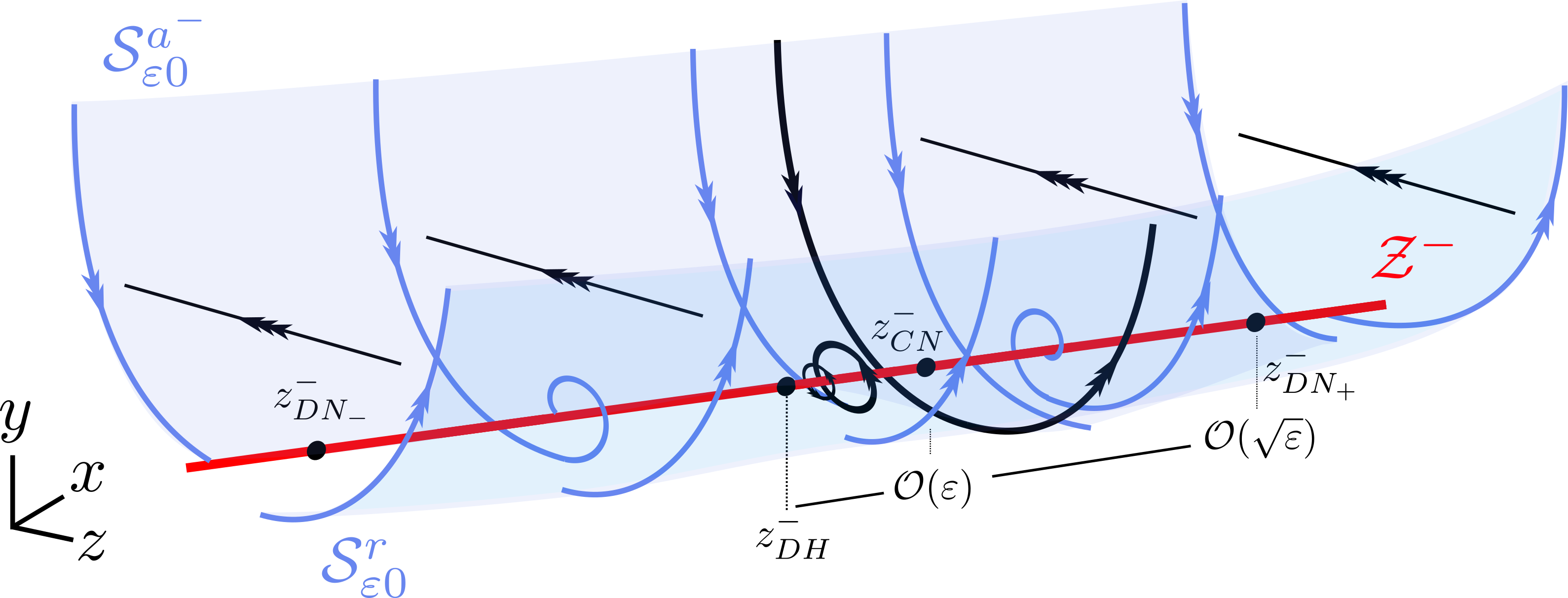}
\caption{Stability of the supercritical manifold $\mathcal{M}_2$ on the various portions of $\mathcal{Z}^{-}$, for $\varepsilon$ sufficiently small and $ \delta=0 $: at $z_{DN_\mp}^-$, the real eigenvalues of the linearisation of Equation~\eqref{fastsub} about $ \mathcal{M}_2 $ in \eqref{eigens} become complex, with a corresponding change from nodal to focal attraction or repulsion, and vice versa; at $ z_{DH}^- $, a Hopf bifurcation occurs which gives rise to small-amplitude periodic orbits. These orbits cease to exist at $ z_{CN}^- $, where a connecting trajectory between $ \mathcal{S}_{\varepsilon0}^{a^-}$ and $ \mathcal{S}_{\varepsilon0}^{r} $ is found. We note that the corresponding $z$-interval is of width $\mathcal{O}(\varepsilon)$, while the focal region is $\mathcal{O}(\sqrt{\varepsilon})$ wide overall.}
\figlab{M2bif}
\end{figure}

For $\varepsilon$ sufficiently small, Equation~\eqref{fastsub} undergoes a Hopf bifurcation at a point $ p_{DH}^- = \big( x_{DH}^-, y_{DH}^-,$ $z_{DH}^-\big)$; the periodic orbits that arise in that bifurcation cease to exist at $ z_{CN}^- $, where a connecting trajectory between the manifolds $ \mathcal{S}_{\varepsilon0}^{a^-}$ and $ \mathcal{S}_{\varepsilon0}^{r} $ is found. In other words, $\mathcal{S}_{\varepsilon0}^{a^-} $ and $ \mathcal{S}_{\varepsilon0}^{r} $ intersect transversely within the hyperplane $\mathcal{P}_{CN}^- : \{z=z_{CN}^-\}$ which lies $ \mathcal{O}(\varepsilon) $-close to $ p_{DH}^- $ in the $ z $-direction. Moreover, two degenerate nodes $ p_{DN_\mp}^- $ are located on $ \mathcal{M}_2 $ around $ p_{DH}^- $ at an $ \mathcal{O}(\sqrt{\varepsilon}) $-distance; see \figref{M2bif}. The asymptotics (in $\varepsilon$) of these objects is summarised below.
\begin{lemma}
A Hopf bifurcation of Equation~\eqref{fastsub} occurs at $p_{DH}^-:\ \lp x_{DH}^-, y_{DH}^-, z_{DH}^-\rp\in\mathcal{Z}_{\varepsilon0}^{-}$, where 
%	and at $d_{H}^+:\ \lp x_{DH}^+, y_{DH}^+, z_{DH}^+\rp\in\mc{Z}^{+}$, where
	\begin{gather}
	x_{DH}^-=-\frac{\beta}{2f_2}\varepsilon + \mathcal{O}(\varepsilon^2),\quad 
	y_{DH}^-=\frac{\beta^2}{4f_2}\varepsilon^2 + \mathcal{O}(\varepsilon^3),
	\quad\text{and}\quad
	z_{DH}^-={-\frac{\alpha\beta}{2f_2}}\varepsilon + \mathcal{O}(\varepsilon^2). \eqlab{DHB}
	\end{gather}
Two degenerate nodes $p_{DN_{\mp}}^-$ are located at
	\begin{gather}
	\begin{gathered}
	x_{DN_{\mp}}^-=\mp\lp\frac{\sqrt{\alpha}}{f_2}\sqrt{\varepsilon}+\frac{\beta}{2f_2}\varepsilon\rp+\mathcal{O}(\varepsilon^{\frac32}),\quad y_{DN_{\mp}}^-=\frac{\alpha}{f_2}\varepsilon+\mathcal{O}(\varepsilon^{\frac32}),\quad\text{and}\\ z_{DN_{\mp}}^-={\mp}\frac{{\alpha^{\frac32}}}{f^2_2}\sqrt{\varepsilon}{+}\frac{\alpha\beta}{f_2}\lp {1}\mp\frac{1}{2}\rp\varepsilon+\mathcal{O}\big(\varepsilon^{\frac32}\big),
	\end{gathered}
	\end{gather}
while a canard trajectory is contained in the hyperplane $\mathcal{P}_{CN}^- : \{z=z_{CN}^-\}$, with
%\begin{gather}
%\begin{gathered}
%x_{CN}^-={x_{DH}^-}+{\beta\frac{ 5f_2-3(1-\alpha f_3)}{4\lp 1+f_2\rp f_2}}\varepsilon+\mathcal{O}(\varepsilon^2),\quad
%y_{CN}^-={F\lp x_{CN}^-\rp},\quad\text{and}\quad
%z_{CN}^-= {G\lp x_{CN}^-\rp}.
%\end{gathered}
%\eqlab{zcd}
%\end{gather}
\begin{gather}
\begin{gathered}
z_{CN}^-={z_{DH}^-}+{\alpha\beta\frac{ 5f_2-3(1-\alpha f_3)}{4\lp 1+f_2\rp f_2}}\varepsilon+\mathcal{O}(\varepsilon^2).
\end{gathered}
\eqlab{zcd}
\end{gather}
\lemmalab{delayedH}
\end{lemma}
\begin{proof}
The hyperplane $\{z=z_{CN}^-\}$, which contains the transverse intersection between $ \mathcal{S}^{a^-}_{\varepsilon0} $ and $ \mathcal{S}^{r}_{\varepsilon0} $, can be obtained by Melnikov-type calculations; see \cite{krupa2001extending, letson2017analysis}. The remaining estimates follow by considering the Jacobian matrix of the linearisation of \eqref{fastsub} along $\mathcal{M}_2$,
	\begin{align}
	J= \begin{pmatrix}
	x(2f_2+3f_3x) & -1 \\
	\varepsilon\alpha & \varepsilon\beta 
	\end{pmatrix},
	\eqlab{jac}
	\end{align}
the eigenvalues of which are
\begin{align}\eqlab{eigens}
\nu_{1,2}=\frac{1}{2}\left[\beta\varepsilon+2{f_2}x+3{f_3}x^2 \pm\sqrt{\big(\beta \varepsilon+2 {f_2} x+3 {f_3} x^2\big)^2-4 \big(\alpha\varepsilon+2\beta\varepsilon{f_2}x+3\beta\varepsilon {f_3}x^2\big)}\right].
\end{align}
\end{proof}

\begin{remark}
The Hopf bifurcation at $p_{DH}^-$ is ``inherited'' from the fact that $\mathcal{M}_2$ and $\mathcal{L}^-$ intersect in the folded singularity $q^-$: for $ \varepsilon=0 =\delta$, the trace of the Jacobian $J$ vanishes at that point.
\end{remark}

\begin{remark}
The estimate in \eqref{zcd} is a generalisation of the corresponding expression in \cite{letson2017analysis} for their canonical system, Equation~\eqref{canonic}; the $f_3$-dependence of \eqref{zcd} implies that the cubic $x$-terms in our Equation~\eqref{normal-a} do, in fact, contribute to the local dynamics.
\end{remark}

%It is therefore now apparent how the stability properties of $ \mathcal{Z}_{\varepsilon0}^{a,r} $ depend on $ \varepsilon $ in spite of the geometry being identical to that of $ \mathcal{Z}^{\mp,r} $, respectively. \qs{We reiterate that for $\varepsilon,\delta>0$ small, Equation~\eqref{normal} undergoes Hopf bifurcations for $\mu$-values that lie $\mc{O}(\varepsilon,\delta)$-close to  $\mu_{q}^\mp$; see \cite{guckenheimer2008singular,krupa2008mixed,desroches2012mixed} for details.}

Motivated by \lemmaref{delayedH}, we introduce the following notation: for $\delta=0$, we define the intervals
\begin{gather}
\mathcal{I}_{\rm nod} = \lp -\infty, z_{DN_-}^- \rp,\quad 
\mathcal{I}_{\rm foc} = \lp z_{DN_-}^-, z_{DH}^-\rp, \quad\text{and}\quad\mathcal{I}_{\rm can}= 
\lp \min\lb z_{DH}^-,z_{CN}^-\rb, \max \lb z_{DH}^-,z_{CN}^-\rb \rp.
\eqlab{intervls}
\end{gather}
Then, it follows that
\begin{enumerate}
\item the manifold $ \mathcal{S}^{a^-}_{\varepsilon0} $ connects to $ \mathcal{Z}^{-} $ for $ z<\min \lb z_{DH}^-,z_{CN}^-\rb $, while $ \mathcal{S}^{r}_{\varepsilon0} $ connects to $ \mathcal{Z}^{-} $ for $ z>\max \lb z_{DH}^-,z_{CN}^-\rb $;
\item for $ f_2<\frac{3}{5}\lp 1-\alpha f_3\rp $,  i.e., for $ z_{CN}^->z_{DH}^- $, the Hopf bifurcation at $ p_{DH}^- $ is supercritical, with the resulting periodic orbits the $\omega$-limit sets of trajectories on $ \mathcal{S}^{a^-}_{\varepsilon0} $;
\item for $ f_2>\frac{3}{5}\lp 1-\alpha f_3\rp $, i.e., for $ z_{CN}^-<z_{DH}^- $, the Hopf bifurcation at $ p_{DH}^- $ is subcritical, with the resulting periodic orbits the $\alpha$-limit sets of trajectories on $ \mathcal{S}^{r}_{\varepsilon0} $.
\end{enumerate}

The corresponding geometry is illustrated in \figref{M2bif}; we emphasise that analogous objects $p_{DH}^+$, $\mathcal{P}_{CN}^+$, and $ p_{DN_{\pm}}^+ $, which are located symmetrically to the above, exist on $ \mc{Z}^{+} $. 

%Finally, in case $ \mathcal{M}_2 $ admits two fold points, we have the following.
%\begin{proposition}
%Assume that \asuref{albeta} holds and that $\mathcal{M}_2$ admits two fold points, i.e., that $\beta^2f_2^2-3\alpha\beta f_3>0$, by \propref{zfoldsprop}. Then, for $ 0<\varepsilon\ll1 $, the submanifold  $\mathcal{Z}^r_{\varepsilon0}$ is of saddle type.
%\proplab{sad}
%\end{proposition}
%\begin{proof}
%The statement is immediate from the fact that the determinant of the Jacobian \eqref{jac} is negative for $ (x,y,z) \in \mathcal{Z}^r$, vanishing at the fold points $p^\mp$.
%\end{proof}
%We note that, for $ \beta <0 $ -- which corresponds to the case illustrated in \figref{M2bif} -- the points at which Hopf bifurcations in the fast subsystem occur are always located on the repelling sheet $\mathcal{S}^r$ of $\mathcal{M}_1$; however, the points at which the emanating limit cycles cease to exist can be located either to the left or to the right of these points, based on the choice of the various parameters in the system, see \eqref{zcd}. The case where $z_{CN}=0$ is highly degenerate and we leave its study as part of future work. 
We define the \textit{canard point} $p_{CN}^- = (x_{CN}, y_{CN}, z_{CN})$ by
\begin{align*}
    p_{CN}^- = \mc{P}_{CN}^- \cap \mc{Z}^{-}.
\end{align*}
It has already been pointed out in \cite{letson2017analysis} that $p_{CN}^-$ and the Hopf point $p_{DH}^-$ on $\mathcal{Z}^{-}$ collapse to the origin in the limit of $\varepsilon=0$; correspondingly, the origin is referred to as the ``canard delayed Hopf singularity'' in the double singular limit of $ \varepsilon=0=\delta $. As a result, the folded singularity at $ q^- $ displays characteristics of both a Hopf point -- in that the trace of the Jacobian in \eqref{jac} vanishes -- and a canard point -- in that $ \mathcal{S}^{a^-} $ and $ \mathcal{S}^r $ meet along a fold. Moreover, we remark that an ``incomplete'' canard explosion occurs at $ z_{CN}^- $ in Equation~\eqref{canonic}, as the corresponding intermediate problem has two equilibria, with the equilibrium corresponding to $ \mathcal{Z}^r $ being a saddle forming a homoclinic connection to itself; see \cite{letson2017analysis} for details. On the other hand, Equation \eqref{normal} could feature either complete or incomplete canard explosion, depending on the relative position of $q^-$ and $p^+$; the implications for the global dynamics of the system are currently being investigated.

We briefly describe the associated two mechanisms -- bifurcation delay and sector-type dynamics -- in the following; we remark that the former is common in two-timescale systems with two fast variables, while the latter typically occurs in two-timescale systems with two slow variables. Therefore, the coexistence of these mechanisms in three-timescale systems is due to the fact that such systems can simultaneously be viewed as having two fast and one slow variables, as well as as one fast and two slow variables. {(For four-dimensional two-timescale systems with two fast and two slow variables, that interplay has been documented in \cite{curtu2011interaction}.)}

\subsection{Bifurcation delay} 

Bifurcation delay is typically encountered in two-timescale systems with two fast variables and one slow variable. In the context of Equation~\eqref{normal}, it is realised when trajectories are attracted to $ {\mathcal{Z}_{\varepsilon\delta}}\big\lvert_{\mathcal{I}_{\rm nod}+\mathcal{O}(\delta)} $ or $ {\mathcal{Z}_{\varepsilon\delta}}\big\lvert_{\mathcal{I}_{\rm foc}+\mathcal{O}(\delta)} $; recall \eqref{intervls} and \figref{M2bif}. Following the slow flow on $ {\mathcal{Z}_{\varepsilon\delta}^{-}}$, trajectories experience a delay in being repelled from $ {\mathcal{Z}_{\varepsilon\delta}^{-}} $ when crossing the Hopf bifurcation point $ p_{DH}^- $, as the accumulated contraction to $ {\mathcal{Z}_{\varepsilon\delta}^{-}}$ needs to be balanced by the total expansion from $ {\mathcal{Z}_{\varepsilon\delta}^{-}} $ \cite{krupa2010local}. Specifically, given some point 
$p_{\rm in}= (x_{\rm in},y_{\rm in},z_{\rm in})$ in an $\mathcal{O}(\delta)$-neighbourhood of ${\mathcal{Z}_{\varepsilon\delta}^{-}} $, one obtains the $x$-coordinate of the corresponding point $p_{\rm out}$ where the given trajectory through $p_{\rm in}$ exits an $\mc{O}(\delta)$-neighbourhood of ${\mathcal{Z}_{\varepsilon\delta}^{+}} $ from
\begin{align}
\int_{x_{\rm in}}^{x_{\rm out}} \frac{\Re \lb \nu_{1,2}\lp x\rp\rb}{\mu+\phi\lp x,F(x),G(x)\rp}\tn{d}x =0; 
\eqlab{inout}
\end{align}
here, $\nu_{1,2}$ are the eigenvalues of the linearisation of Equation~\eqref{fastsub} about $ {\mathcal{Z}_{\varepsilon\delta}^{-}} $, as defined in \eqref{eigens}. 
%We remark that, in \cite{krupa2010local,curtu2011interaction}, the accumulated contraction is balanced by the total expansion across the primary weak canard, which is an orbital connection between the folded singularity $ q^- $ and a true equilibrium of the system on $ \mathcal{S}^r $; in our case, that orbital connection can be locally approximated by  $ \mathcal{Z}_{\varepsilon\delta}^{-}$ \cite{letson2017analysis}. 
Trajectories that are attracted to $ \mathcal{Z}_{\varepsilon\delta}\big\lvert_{\mathcal{I}_{\rm foc}} $ typically exhibit ``dense" SAOs with initially decreasing and then increasing amplitude; see panel (e) of \figref{relax} for an illustration in the context of the Koper model, Equation~\eqref{koper1}. By contrast, trajectories that are attracted to $ \mathcal{Z}_{\varepsilon\delta}\big\lvert_{\mathcal{I}_{\rm nod}} $ are characterised by very few SAOs that are followed by a large excursion; cf.~\figref{relax}(c).
	
The case where trajectories enter the focally attracting region $\mathcal{I}_{\rm foc}$ is naturally studied in the ``rescaling chart" $ \kappa_2 $ which is introduced as part of a blow-up analysis in \cite{krupa2001extending,letson2017analysis}, since that region is bounded by the degenerate nodes $p_{DN_\mp}^-$ and, thus, of width $ \mathcal{O}(\sqrt{\varepsilon}) $. 
%(We remark that, in Equation~\eqref{prototypical}, the point $ p_{DH}^- $ is always located at the origin because $ \beta=0 $ therein \cite{de2016sector}, while that point is simply shifted here due to the presence of the $ \beta y $-term in the intermediate Equation~\eqref{normal-b}.)
In that case, the eigenvalues $\nu_{1,2}$ in \eqref{eigens} are complex conjugates, which implies that the corresponding trajectory of \eqref{normal} undergoes damped oscillation towards $ \mathcal{Z}_{\varepsilon\delta}^{-} $.
	
On the other hand, when trajectories enter the nodally attracting region $\mathcal{I}_{\rm nod}$, the corresponding entry point is typically $ \mathcal{O}(\varepsilon^c) $ away from the folded singularity $q^-$, with $ c<1/2 $. One may therefore refer to the unscaled system, Equation~\eqref{normal}, for the study of that case. The eigenvalues $\nu_{1,2}$ in \eqref{eigens} correspond to strong and weak eigendirections: specifically, for $ z<z_{DN_-}^- $, the eigenvalue $ \nu_1 $ represents the weak eigendirection, while the eigenvalue $ \nu_2 $ corresponds to the strong eigendirection; that correspondence is reversed for $ z>z_{DN_-}^+ $. Due to the hierarchy of timescales in \eqref{normal}, trajectories are first attracted to $ \mathcal{S}_{\varepsilon\delta}^{a^-} $ and then to $ \mathcal{Z}_{\varepsilon\delta}^{-} $. Therefore, for initial conditions $ (x,y,z)\in \mathcal{S}_{\varepsilon\delta}^{a^-}$, trajectories approach $ \mathcal{Z}_{\varepsilon\delta}^{-} $ along the weak eigendirection, while for $ (x,y,z)\in \mathcal{S}_{\varepsilon\delta}^{r}$, trajectories are repelled from $ \mathcal{Z}_{\varepsilon\delta}^{-} $ along the strong eigendirection. It is hence reasonable to balance the accumulated contraction and expansion using solely $ \nu_1 $ in \eqref{inout}. Since the accumulated contraction on the intermediate timescale has to be balanced by expansion on the fast timescale, we have the following:
\begin{proposition}[\cite{hayes2016geometric,krupa2010local}]
Assume that \asuref{albeta} and \asuref{redflow} hold, and consider $ \lp x_{\rm in},y_{\rm in},z_{\rm in}\rp\in {\mathcal{Z}_{\varepsilon\delta}^{-}}\lvert_{\mathcal{I}_{\rm foc}\cup\mathcal{I}_{\rm nod}} $. Then, the exit point $ \lp x_{\rm out},y_{\rm out},z_{\rm out}\rp $ that is defined by \eqref{inout} satisfies
\begin{align*}
x_{\rm out} < x_{DN_+}^{-}+o(1),\quad y_{\rm out} < y_{DN_+}^{-}+o(1),\quad\text{and}\quad z_{\rm out} < z_{DN_+}^{-}+o(1).
\end{align*}
\proplab{nodesc}
\end{proposition}
%In both regimes, $x_{\rm out}$ is hence well-defined, which allows us to define the corresponding local {flow} $\Phi_{\rm slow}^-$, as postulated in Equation~\eqref{pislow}.
\begin{remark}
The estimates on the entry point $p_{\rm out}$ in \propref{nodesc} can be refined under the additional assumption that the slow flow of Equation~\eqref{normal} is constant, i.e., that $ \phi(x,y,z) = 0 $: as in \cite{de2016sector}, for $p_{\textnormal{in}}\in \mc{I}^{\textnormal{in}}_{\textnormal{foc}}$ it then follows from \eqref{inout} that   $ x_{\rm out} = x_{DH}-x_{\rm in} $.
\end{remark}

\begin{remark}
In \cite{letson2017analysis}, for constant slow flow of Equation~\eqref{normal}, i.e., for $ \phi(x,y,z) = 0 $, the weak contraction towards ${\mathcal{Z}_{\varepsilon\delta}^{-}}$ is balanced by the weak expansion therefrom via
\begin{align*}
\int_{x_{\rm in}}^{x_{DN}^-} \Re \lb \nu_{1}\rb\tn{d}x +\int_{x_{DN}^+}^{x_{\rm out}} \Re \lb \nu_{2}\rb\,\tn{d}x=0.
\end{align*}
In that context, the fold point $p^-$ was in fact identified as the buffer point at which trajectories have to leave ${\mathcal{Z}_{\varepsilon\delta}^{-}}$, which allows them to account for maximal canard trajectories.
\end{remark}

\subsection{Sector-type dynamics} 
 
Sector-type dynamics is typically encountered in two-timescale systems with one fast variable and two slow variables; it can be described by exploiting the near-integrable structure of Equation~\eqref{normal} in a vicinity of the canard point $p_{CN}^-$ \cite{krupa2008mixed, de2016sector}. Sector-type dynamics is realised when trajectories are attracted to $ \mathcal{Z}\big\lvert_{\mathcal{I}_{\rm can}+\mathcal{O}(\delta)}$, where $ \mathcal{I}_{\rm can} $ is given by \eqref{intervls}. (We emphasise that, for $\delta$ sufficiently small, $\mathcal{S}_{\varepsilon\delta}^{a^-}$ and $\mathcal{S}_{\varepsilon\delta}^r$ intersect in a canard trajectory that provides a connection between the two manifolds; recall \secref{local}.)
For $\varepsilon$ and $\delta$ sufficiently small and $z_{\rm in}\in\mathcal{I}_{\rm can}+\mathcal{O}(\delta)$, trajectories remain ``trapped" and undergo SAOs (``loops"), taking $ \mathcal{O}(\mu\delta\sqrt{-\varepsilon\ln\varepsilon}) $ steps in the $ z $-direction until they reach a point $ p_{\rm out} $ at which they can escape following the fast flow of Equation~\eqref{normal}. The 
$ z $-coordinate of that point can hence be approximated by 
\begin{align}
z_{\rm out} = z_{CN}^- +o(1).
\eqlab{sectout}
\end{align}
The number of SAOs that is observed in the corresponding trajectory is determined by the passage thereof through sectors of rotation \cite{krupa2008mixed}, the boundaries of which are so-called ``secondary" canards. Trajectories that are attracted to this region typically exhibit few SAOs of near-constant amplitude; see panel (b) of  \figref{sechof}, where sector-type SAOs are seen in between delay-type segments. A detailed study of sector-type dynamics in Equation~\eqref{normal} is part of work in progress; see again \cite{krupa2008mixed} for an in-depth discussion in the context of their prototypical model, Equation~\eqref{prototypical}.

% \begin{remark}
% Let $z_{q}^+$ denote the $z$-coordinate of the folded singularity $q^+$, as defined in Equation~\eqref{foldsing}. If $ z_{q}^+\geq z_{CN}^- +\mathcal{O}(\delta)$, as in \figref{singcycle}(a), a ``complete" canard explosion occurs at $ z_{CN}^- $. On the other hand, for $ z^-_q\leq z_{CN}^- +\mathcal{O}(\delta)$, cf.~\figref{singcycle}(b), the canard explosion is ``incomplete" \cite{de2015neural}: the layer problem, Equation~\eqref{norm12lay}, has two equilibria, with the equilibrium corresponding to $ \mathcal{Z}^r $ being a saddle. Hence, a homoclinic connection is formed from that saddle to itself in the intersection between $ \mathcal{S}^{a^-}_{\varepsilon0} $ and $ \mathcal{S}^{r}_{\varepsilon0} $. The implications of these two different scenarios, both for the local near-integrable dynamics of Equation~\eqref{normal} and the global oscillatory dynamics, are currently under investigation. We emphasise that, in Equation~\eqref{canonic} \cite{letson2017analysis}, a homoclinic connection is always present between the corresponding slow manifolds; our analysis differs from that in \cite[Section 4.3]{letson2017analysis} due to the presence of an $\mathcal{O}(x^3)$-term in \eqref{normal-a}, as is also evident from \eqref{zcd}. 
% \end{remark}

\bibliographystyle{siam}
\bibliography{refs}
\end{document}